\title{A lifting principle for canonical stability indices of varieties of general type}
\author{Meng Chen, Hexu Liu}
\date{\today}
\address{\rm Meng Chen, School of Mathematical Sciences, Fudan University, Shanghai 200433, China}
\email{mchen@fudan.edu.cn}
\address{\rm Hexu Liu, Shanghai Center for Mathematical Sciences, Fudan University, Jiangwan Campus, Shanghai, 200438, China}
\email{19110840002@fudan.edu.cn}
\thanks{Project supported by NSFC for Innovative Research Groups (\#12121001), National Key Research and Development Program of China (\#2020YFA0713200) and NSFC grant \# 12071078. The first author is a member of LMNS, Fudan University}
\newcommand{\roundup}[1]{\ulcorner{#1}\urcorner}
\newcommand{\rounddown}[1]{\llcorner{#1}\lrcorner}
\newcommand\vol{\text{\rm vol}}
\newcommand{\bN}{\mathbb{N}}
\newcommand{\bZ}{\mathbb{Z}}
\newcommand{\bQ}{\mathbb{Q}}
\newcommand{\bR}{\mathbb{R}}
\newcommand{\bC}{\mathbb{C}}
\newcommand{\bP}{\mathbb{P}}
\newcommand{\fix}[1]{{\rm Fix}|#1|}
\newcommand{\lct}{{\rm lct}}
\newcommand{\Codim}{{\rm Codim}}
\newcommand{\fraction}[1]{\langle{#1}\rangle}
\newcommand{\OO}{\mathcal{O}}
\newtheorem{thm}{Theorem}[section]
\newtheorem{lemma}[thm]{Lemma}
\newtheorem{defi}[thm]{Definition}
\newtheorem{eg}[thm]{Example}
\newtheorem{cor}[thm]{Corollary}
\begin{document}
\begin{abstract}  For any integer $n>0$, the $n$-th canonical stability index $r_n$ is defined to be the smallest positive integer so that the $r_n$-canonical map $\Phi_{r_n}$ is stably birational onto its image for all smooth  projective $n$-folds of general type. We prove the lifting principle for $\{r_n\}$ as follows: $r_n$ equals to the maximum of the set of those canonical stability indices of smooth projective $(n+1)$-folds with sufficiently large canonical volumes.  Equivalently, there exists a constant $\mathfrak{V}(n)>0$ such that, for any smooth projective $n$-fold $X$ with the canonical volume $\vol(X)>{\mathfrak V}(n)$,  the pluricanonical map $\varphi_{m,X}$ is birational onto the image for all $m\geq r_{n-1}$. 
\end{abstract}
\maketitle

\pagestyle{myheadings}
\markboth{\hfill Meng Chen and Hexu Liu\hfill}{\hfill A lifting principle for canonical stability indices of varieties of general type\hfill}
\numberwithin{equation}{section}

\section{Introduction}

Throughout we work over the complex number field ${\mathbb C}$.\par

Within birational geometrty, an important post-MMP mission might be to work out the exact boundedness of given classes of varieties. The boundedness is usually determined by some key birational invariants such as the canonical (or anti-canonical) volume, the canonical (or anti-canonical) stability index, the Iitaka-fibration index, etc. These are crucial to explicit classifications since they directly disclose the existence of relevant moduli spaces of varieties in question.  

We start with considering a smooth projective variety $X$ of general type.  The {\em canonical stability index} of $X$ is a birational invariant, which is defined as:
$$r_s(X):=\min\{l \in \bZ_{>0}|\ \text{$\varphi_{m,X}=\Phi_{|mK_X|}$ is birational for all $m \geq l$}\}.$$ 
By the work of Hacon-McKernan (\cite{HM06}), Takayama (\cite{Tak06}) and Tsuji (\cite{Tsu06}), for any integer $n > 0$, there is a number $r_n \in \bZ_{>0}$ such that the pluricanonical map $\phi_{m}$ is birational onto its image for all $m \geq r_n$ and for all smooth projective $n$-folds of general type.  The optimal such number $r_n$ is usually referred to as the $n$-th {\it canonical stability index}. If one only considers those varieties $X$ with $p_g(X)>0$, the number $r_n^+ \in \bZ_{>0}$ is similarly defined as 
$$r_n^+:=\text{max}\{r_s(X)|\ \text{$X$\ is a smooth proj. $n$-fold of g. t. with $p_g>0$}\}.$$\par

The sequence $\{r_n\}$ (resp., $\{r_n^+\}$) of canonical stability indices is clearly increasing. Among known values, one has $r_1 = r_1^+ = 3$ and $r_2 = r_2^+ = 5$ by Bombieri (\cite{Bom73}). In dimension $3$,  one has $r_3 \leq 57$ by \cite{CC15, C21} and $14 \leq r_3^+ \leq 17$ by \cite{CHP21}. In dimensions $\geq 4$, no effective upper bound for $r_n$ is known and, however,  one has 
$r_n>2^{\sqrt{2^n}}$ by Esser-Totaro-Wang (\cite{ETW23}). 
\par 

The motivation of this article is to solve the interesting open problem, of McKernan who first put forward in Mathematics Review (see MR2339333), which is also known as \cite[Conjecture 6.1, Conjecture 6.2]{CJ17}. Our first main result is the following:

\begin{thm}\label{C-L-1} 
For any integer $n \geq 2$, there exists a constant ${\mathfrak{V}}(n)>0$ such that, for any smooth projective $n$-fold $V$ with $\vol(V) > \mathfrak{V}(n)$ (resp., $p_g(V)> \mathfrak{V}(n)$), the inequality holds
$$r_s(V)\leq r_{n-1}\  (\text{resp.}\ r_s(V)\leq r_{n-1}^+).  $$
\end{thm}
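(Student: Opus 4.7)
The overall approach is induction on dimension, implemented via a pencil construction in the spirit of Bombieri, Chen--Chen, Hacon--McKernan and Takayama. After a suitable birational modification $\pi: V' \to V$, I aim to produce a free pencil $\Lambda$ of divisors on $V'$, contained in $|m_0 \pi^* K_V|$ for a controlled $m_0$, whose general member $S$ is a smooth irreducible $(n-1)$-fold of general type (and moreover with $p_g(S)>0$ in the case where $p_g(V)$ is large). Given such $\Lambda$, I show that $\varphi_{m,V'}$ separates two general points of $V'$ for $m \geq r_{n-1}$ (resp.\ $r_{n-1}^+$) by a two-case analysis: two points lying in distinct members of $\Lambda$ are separated by $\Lambda$ itself together with any effective section of $|(m - m_0)K_{V'}|$, while two points on a common member $S$ are separated by restriction. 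Specifically, Kawamata--Viehweg vanishing yields $H^1(V', mK_{V'} - S) = 0$, so $|mK_{V'}|$ restricts surjectively onto its image on $S$; since $\Lambda$ is free, any two general fibers are disjoint, so $S|_S \sim 0$ and adjunction gives $K_{V'}|_S \sim K_S$. Hence the restricted system contains $|mK_S|$, which is birational for $m \geq r_{n-1}$ by the defining property of $r_{n-1}$.

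The construction of the pencil is the technical heart. When $p_g(V) > \mathfrak{V}(n)$, a classical slicing argument on the image of the canonical map $\Phi_{|K_{V'}|}$ produces a pencil $\Lambda \subseteq |\pi^*K_V|$ by cutting the image with $\dim \Phi(V') - 1$ general hyperplanes; the enormity of $p_g$ is used to guarantee that the canonical image has positive dimension, that the general slice $S$ satisfies $p_g(S) > 0$ (via a cascade of restriction sequences and Kodaira vanishing), and that $\vol(S)$ is bounded below by a positive quantity, forcing $S$ to be of general type. When $\vol(V) > \mathfrak{V}(n)$, I instead start from $|m_0 K_{V'}|$ with $m_0$ bounded by a fixed constant (coming from known effective birationality results in dimension $n$), use Noether-type plurigenus inequalities to produce many sections, and slice down to a pencil. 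In both cases the threshold $\mathfrak{V}(n)$ is chosen so that $S$ is of general type and $m_0 \leq r_{n-1}$ (automatic for $n \geq 3$ since $r_{n-1} \geq r_2 = 5$, and handled directly for $n=2$ via Bombieri's explicit bounds for surfaces with large invariants).

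The main obstacle is the Kawamata--Viehweg vanishing step, which must hold uniformly for every $m \geq r_{n-1}$ rather than only for $m \gg 0$: one needs $(m-1)K_{V'} - S$ (or a suitable $\bQ$-linear perturbation with snc boundary) to be big and nef for all such $m$. The decomposition $m_0 \pi^* K_V = M + Z$ into movable and fixed parts, together with $S \leq M$, yields nefness of $M - S$, and bigness is extracted by adding a small multiple of $\pi^* K_V$ using the hypothesis $m_0 \leq r_{n-1}$. Equally delicate is the propagation of positivity from $V$ to $S$ (general type in the $\vol$-case, and $p_g > 0$ in the $p_g$-case), which requires careful quantitative control on all Noether-type and adjunction inequalities; the constant $\mathfrak{V}(n)$ is precisely the threshold at which all these requirements can be met simultaneously.
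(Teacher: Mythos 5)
Your proposal is essentially the classical pencil-and-restriction induction (Bombieri, Chen, Todorov, Chen--Jiang), and it has two genuine gaps, one of which is fatal in the volume case. First, in the case where only $\vol(V)$ is large you need a pencil inside $|m_0K_{V'}|$ with $m_0\leq r_{n-1}$, and you assert this is ``automatic for $n\geq 3$ since $r_{n-1}\geq r_2=5$''. This confuses the size of $r_{n-1}$ with the existence of sections: no known effective result produces sections of $|m_0K_V|$ from a large volume for any $m_0$ bounded by $r_{n-1}$; the constants one can actually use (e.g.\ $r_n$ or the Hacon--McKernan/Takayama constants) satisfy $m_0\geq r_n\geq r_{n-1}$, and whether $r_n>r_{n-1}$ strictly is itself open. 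With $m_0>r_{n-1}$ your vanishing step collapses for the crucial range $r_{n-1}\leq m<m_0$, since then $(m-1)K_{V'}-S\sim (m-1-m_0)\pi^*K_V+(\text{effective})$ is not even pseudo-effective. Second, even when a pencil exists (e.g.\ $m_0=1$ in the $p_g$ case), the Kawamata--Viehweg step is not available as stated: on the blown-up model neither $K_{V'}$ nor $\pi^*K_V$ is nef, the fixed and fractional parts of the pluricanonical system give a boundary that need not be klt, and the standard tricks only put a \emph{subsystem} of the form $|K_S+\roundup{(m-1)P}|_S+\cdots|$ into the image of restriction, not the full $|mK_S|$; but the induction via the universal constant $r_{n-1}$ requires the full $|mK_S|$, for every $m$ down to $r_{n-1}$. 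Obtaining exactly this surjectivity is the hard point of the problem, and your sketch (``nefness of $M-S$'', ``bigness extracted by adding a small multiple of $\pi^*K_V$'') does not address it.

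The paper takes a different and unavoidable route precisely because the structure produced by large volume is not a pencil of divisors: by \cite[Theorem 6.8]{CJ17} (Theorem \ref{fibration}), after a birational modification $V$ carries a fibration $f:V\to T$ whose general fiber $X$ has dimension $k$ possibly much smaller than $n-1$ (so $X$ has codimension $>1$), is birationally bounded, and is an irreducible non-klt center of a pair $(V,\Delta)$ with $\Delta\sim_\bQ\delta K_V$ and $\delta$ small. The technical heart is then Theorem \ref{key}, an extension theorem for such higher-codimension fibers, proved via Nakayama--Zariski decomposition, adjunction for klt-trivial fibrations (the canonical bundle formula), Birkar's boundedness of lc thresholds (Theorem \ref{BAB2}) and the Hacon--McKernan extension theorem (Theorem \ref{extend}); it yields surjectivity of $H^0(V,pK_V)\to H^0(X_1,pK_{X_1})\oplus H^0(X_2,pK_{X_2})$ for all $p\geq 2$, hence $r_s(V)\leq\max\{r_s(X),2\}\leq r_k\leq r_{n-1}$. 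Your proposal never engages with the codimension-$>1$ situation, which is exactly why the problem remained open beyond dimensions where the pencil method suffices.
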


The surface case of Theorem \ref{C-L-1} was proved by Bombieri (\cite{Bom73}). The $3$-fold case was proved by Todorov (\cite{Tod07}) and the first author (\cite{C03}). The first author and Jiang (\cite{CJ17}) proved the $4$-fold case and, partially, the $5$-fold case. We noticed that Lacini (\cite{Lac23}) recently proved that, for a smooth projective $n$-fold $V$ ($n\geq 4$) with sufficiently large canonical volume, $r_s(V)\leq  \max\{r_{n-1}, (n-1)r_{n-2} + 2\}$. 

The equality in the statement ``$r_s(V)\leq r_{n-1}$ (resp. $r_{n-1}^+$)'' of Theorem
\ref{C-L-1} can attain for lots of examples. 

\begin{eg}\label{exmp1}
Let $X$ be a smooth projective $(n-1)$-fold of general type with $r_s(X) = r_{n-1}$ (resp. $r_s(X) = r_{n-1}^+$ and $p_g(X)>0$). Let $C$ be a smooth curve of genus $g\geq 2$. Take $V:=X \times C$. One sees that, as $g$ is sufficiently large, $\vol(V)$ (resp. $p_g(V)$) can be arbitrarily large and that $r_s(V) = r_{n-1}$ (resp. $r_s(V) = r_{n-1}^+$).
\end{eg}

By virtue of Example \ref{exmp1}, Theorem \ref{C-L-1} has the following equivalent form where the ``lifting principle'' gets the name:

\begin{thm}\label{C-L-1'} 
For any integer $n \geq 1$, there exists a constant ${\mathfrak{V}}(n)>0$ such that, for any number $L>{\mathfrak{V}}(n)$, one has
$$r_n=\text{max}\{r_s(X)|\ X\ \text{is a smooth proj. (n+1)-fold with}\ \vol(X)>L\}$$ 
and, respectively, 
$$r^+_n=\text{max}\{r_s(X)|\ X\ \text{is a smooth proj. (n+1)-fold with}\ p_g(X)>L\}.$$ 
\end{thm}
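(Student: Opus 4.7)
The plan is to deduce Theorem \ref{C-L-1'} directly from Theorem \ref{C-L-1} combined with Example \ref{exmp1}. No new geometric input is needed: Theorem \ref{C-L-1} already carries all the birational content, and the present statement is essentially a reformulation packaging that content together with the standard product construction.

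For the upper bound, I would simply take the constant $\mathfrak{V}(n)$ in Theorem \ref{C-L-1'} to be $\mathfrak{V}(n+1)$ supplied by Theorem \ref{C-L-1}. Then any smooth projective $(n+1)$-fold $X$ with $\vol(X)>L>\mathfrak{V}(n+1)$ is automatically of general type and satisfies $r_s(X)\leq r_{(n+1)-1}=r_n$ by Theorem \ref{C-L-1} applied in dimension $n+1$. Consequently every element of the set on the right-hand side is bounded above by $r_n$.

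For the matching lower bound, I would use the defining extremal property of $r_n$: because $r_n$ is the \emph{smallest} integer $l$ such that $\varphi_{m,W}$ is birational for every $m\geq l$ and every smooth projective $n$-fold $W$ of general type, this maximum must be attained, so there exists a smooth projective $n$-fold $Y$ of general type with $r_s(Y)=r_n$. Fix such a $Y$ and, for a smooth curve $C$ of genus $g\geq 2$, form $V:=Y\times C$. Then $V$ is a smooth projective $(n+1)$-fold of general type; by Example \ref{exmp1} one has $r_s(V)=r_n$, while $\vol(V)$ is a positive multiple of $\vol(Y)\cdot(2g-2)$, hence can be made to exceed $L$ by taking $g$ sufficiently large. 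Thus $r_n$ itself lies in the set on the right-hand side, and combining this with the upper bound gives the desired equality.

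The $p_g$-version proceeds identically: pick $Y$ of general type with $p_g(Y)>0$ and $r_s(Y)=r_n^+$, observe that $p_g(Y\times C)=p_g(Y)\cdot g$ by the Künneth formula grows without bound as $g\to\infty$, and invoke the $p_g$ branch of Theorem \ref{C-L-1} together with the $p_g$ version of Example \ref{exmp1}. The only genuine obstacle is therefore Theorem \ref{C-L-1} itself; once it is established, the passage to Theorem \ref{C-L-1'} is purely formal.
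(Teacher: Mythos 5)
Your proposal is correct and follows essentially the same route as the paper, which derives Theorem \ref{C-L-1'} from Theorem \ref{C-L-1} (applied in dimension $n+1$ for the upper bound) together with Example \ref{exmp1} (the product $Y\times C$ with $C$ of large genus, giving attainment of $r_n$, resp. $r_n^+$, at arbitrarily large volume, resp. geometric genus). Your extra remark that $r_n$ and $r_n^+$ are actually attained by some $n$-fold is a useful bit of bookkeeping that the paper leaves implicit.
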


Theorem \ref{C-L-1} implies the following byproduct. 

\begin{cor}\label{C-L-2} For any integer $n \geq 2$, there exists a constant ${\mathfrak{V}}(n)>0$ such that, for any smooth projective $n$-fold $V$ satisfying one of the following conditions:
\begin{itemize}
\item[(1)] $h^j(\OO_V)>\mathfrak{V}(n)$ for some integer $j$ with $0<j<n$;
\item[(2)] $|\chi(\OO_V)|>{\mathfrak{V}}(n)$, 
\end{itemize}
the inequality $r_s(V)\leq r_{n-1}$ holds. 
\end{cor}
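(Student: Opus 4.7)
The plan is to derive Corollary~\ref{C-L-2} from Theorem~\ref{C-L-1} via a boundedness argument. The key point will be to show that, for any smooth projective $n$-fold $V$ of general type with $\vol(V)$ bounded by a prescribed constant, every Hodge number $h^j(\OO_V)$ is automatically bounded in terms of $\vol(V)$ and $n$. Taking the contrapositive, either of the hypotheses in the corollary will then force $\vol(V) > \mathfrak{V}(n)$ or $p_g(V) > \mathfrak{V}(n)$, where $\mathfrak{V}(n)$ is the constant furnished by Theorem~\ref{C-L-1}, and Theorem~\ref{C-L-1} directly yields $r_s(V) \leq r_{n-1}$.

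To carry out the boundedness step, I would first pass to the canonical model $V_{\mathrm{can}}$, which exists by BCHM and has canonical (hence rational) singularities; since $h^j(\OO_\cdot)$ is a birational invariant among varieties with rational singularities, we have $h^j(\OO_V) = h^j(\OO_{V_{\mathrm{can}}})$ for every $j$. Next, I would invoke the standard boundedness principle: once $r_n$ exists, the $r_n$-canonical map embeds every canonical model of volume $\leq C$ into a fixed $\bP^N$ with $N=N(n,C)$ as a subvariety of bounded degree, so Hilbert-scheme arguments show that these canonical models form a bounded family (this is essentially already contained in the works of Hacon--McKernan, Takayama, and Tsuji cited in the introduction). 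In particular, all $h^j(\OO_{V_{\mathrm{can}}})$ will be uniformly bounded by some constant $B(n,C)$.

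With this in hand, the conclusion is assembled as follows. For condition~(1), choose $\mathfrak{V}(n)$ so that $\mathfrak{V}(n) > B(n, \mathfrak{V}_0(n))$, where $\mathfrak{V}_0(n)$ denotes the constant from Theorem~\ref{C-L-1}; then $h^j(\OO_V) > \mathfrak{V}(n)$ for some $0<j<n$ forces $\vol(V) > \mathfrak{V}_0(n)$, so Theorem~\ref{C-L-1} gives $r_s(V) \leq r_{n-1}$. For condition~(2), use the elementary estimate $|\chi(\OO_V)| \leq 1 + p_g(V) + n \cdot \max_{0 < j < n} h^j(\OO_V)$; hence large $|\chi(\OO_V)|$ forces either $p_g(V)$ to be large (whereupon the $p_g$-version of Theorem~\ref{C-L-1} applies) or some intermediate Hodge number to be large (reducing to case~(1)). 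The main obstacle in this plan is supplying the boundedness input, but since that already follows from the existence of $r_n$ established in the works cited in the introduction, the corollary really amounts to bookkeeping on top of Theorem~\ref{C-L-1}.
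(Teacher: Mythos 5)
Your overall strategy is sound but it is a genuinely different route from the paper's. The paper does not go through boundedness of families at all: it quotes an effective Noether-type inequality for $k$-forms (Theorem \ref{NN}, from \cite{CJ23}), namely $\vol(X)\geq a_{n,k}h^0(X,\Omega_X^k)-b_{n,k}$, which via Hodge symmetry $h^j(\OO_V)=h^0(V,\Omega_V^j)$ converts a large intermediate Hodge number directly into a large volume, with the case $j=n$ (i.e.\ large $p_g$) handled by the Noether inequality of Lemma \ref{Noether}; condition (2) is reduced to condition (1) exactly as you do, by $\chi(\OO_V)=\sum(-1)^ih^i(\OO_V)$. So the paper's proof is a two-line reduction to Theorem \ref{C-L-1} once the effective inequality is available, whereas yours replaces that analytic/effective input by a boundedness-plus-deformation-invariance argument. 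Both work, and your bookkeeping at the end is fine (note that in the large-$p_g$ branch you land on $r_s(V)\leq r_{n-1}^+$, which suffices since $r_{n-1}^+\leq r_{n-1}$ by definition).

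One step of your boundedness input is overstated as written. The existence of $r_n$ from \cite{HM06,Tak06,Tsu06} only says that $\Phi_{r_n}$ is \emph{birational} onto its image; it does not make $r_nK$ very ample on the canonical model, so it does not give that canonical models of volume $\leq C$ form a bounded family (that is a much deeper theorem of Hacon--McKernan--Xu, not ``essentially contained'' in the cited works). What those works do give is \emph{birational} boundedness of smooth $n$-folds of general type with $\vol\leq C$, and that is enough for your purpose, but you need the extra (standard) argument: stratify the birationally bounding family, take simultaneous resolutions so the fibers are smooth, use that $h^j(\OO)$ is a birational invariant of smooth projective varieties and is locally constant (or at least upper semicontinuous, after stratification) in the resulting families, and conclude $h^j(\OO_V)\leq B(n,C)$. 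With that repair your proof is complete; as stated, the appeal to an ``$r_n$-canonical embedding of the canonical model'' is the gap.
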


The key step in proving Theorem \ref{C-L-1} is to develop the following type of extension theorem:  

\begin{thm}\label{key}
Let $n$, $d$ be two integers with $n > d >0$ and $\mathcal{P}$ a biraionally bounded set of smooth projective varieties of dimension $d$. Then there exists a positive number $t$, depending only on $d$ and $\mathcal{P}$, such that the following property holds:
\begin{quote}
    Let $f : V \rightarrow T$ be a surjective fibration where $V$ and $T$ are smooth projective varieties, $\dim V = n$, $\dim T = n-d$ and $\vol(V)>0$. Assume that the following conditions are satisfied: 
\begin{itemize}
\item[(1)] the general fiber $X$ of $f$ is birationally equivalent to an element of $\mathcal{P}$; 
\item[(2)] there exists a positive rational number $\delta < t$ and an effective $\bQ$-divisor $\Delta \sim_\bQ \delta K_V$ such that $X$ is an irreducible non-klt center of $(V,\Delta)$. \end{itemize}
Then the restriction map
$$H^0(V,pK_V) \to H^0(X_1, pK_{X_1}) \oplus H^0(X_2, pK_{X_2})$$ is surjective for any integer $p \geq 2$ and for any two different general fibers $X_1$, $X_2$ of $f$. 
\end{quote}
\end{thm}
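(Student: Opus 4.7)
The plan is to build an effective $\bQ$-divisor $\Gamma$ on $V$ such that $(V, \Gamma)$ is log canonical and its non-klt locus is exactly $X_1 \cup X_2$, with each $X_i$ an isolated minimal non-klt center. The desired surjectivity will then follow from a Nadel-type vanishing theorem on $V$ combined with the adjunction isomorphism $K_V|_{X_i} \cong K_{X_i}$ at a smooth general fiber of $f$.

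I would construct $\Gamma$ in two stages. First, starting from the given $\Delta \sim_{\bQ} \delta K_V$ with $X$ an irreducible non-klt center, I apply standard tie-breaking---combining $\Delta$ with a small multiple of a section of $|m K_V|$ vanishing to high order along $X$, with $m$ depending on $d$ and $\mathcal{P}$ through the birational boundedness---to produce $\Delta^{\sharp} \sim_{\bQ} \delta^{\sharp} K_V$ such that $(V, \Delta^{\sharp})$ is lc in a neighborhood of $X$, $X$ is the unique minimal non-klt center there, and the pair is plt along $X$. The construction is natural in the family, yielding an analogous $\Delta^{\sharp}_{t}$ for every $t$ in a Zariski open $U \subseteq T$. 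Second, for two general fibers $X_1, X_2$ over $t_1, t_2 \in U$, I fix an ample Cartier divisor $A$ on $T$ and choose $H \in |k A|$ containing $t_1, t_2$ with multiplicity one; I then set $\Gamma = \Delta^{\sharp} + \eta f^{*} H$ for small $\eta > 0$ and perform a further localized tie-breaking near $X_1, X_2$ so that the non-klt locus of $(V, \Gamma)$ becomes exactly $X_1 \cup X_2$ and the $K_V$-coefficient of $\Gamma$ stays below $1$---this is where the smallness hypothesis $\delta < t$ is invoked.

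For the vanishing, since $\vol(V) > 0$ makes $K_V$ big, Kodaira's lemma writes $K_V \sim_{\bQ} A_V + E_V$ with $A_V$ ample and $E_V$ effective. Using this decomposition and the openness of ampleness, the $\bQ$-divisor $(p-1) K_V - \Gamma$ can be upgraded to big and nef on a suitable log resolution of $(V, \Gamma)$ for any $p \geq 2$. Applying Nadel's vanishing theorem yields
$$H^{1}\bigl(V, \mathcal{J}(V, \Gamma) \otimes p K_V\bigr) = 0,$$
with $\mathcal{J}(V, \Gamma)$ equal to $\mathcal{I}_{X_1 \cup X_2}$ (or the corresponding adjoint ideal cut out by the minimal lc centers), which produces the surjectivity of the restriction map onto $H^{0}(X_1 \cup X_2, p K_V|_{X_1 \cup X_2})$; adjunction at the smooth general fibers then identifies $K_V|_{X_i}$ with $K_{X_i}$. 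The principal obstacle is the two-fiber tie-breaking described above: pinning the non-klt locus of the single pair $(V, \Gamma)$ down to $X_1 \cup X_2$ while simultaneously absorbing the auxiliary pullback $\eta f^{*} H$ into the $K_V$-coefficient bound. It is precisely this balance that forces the threshold $t$ to be chosen uniformly, depending---through the birational boundedness of $\mathcal{P}$---on uniform Seshadri-type constants for the auxiliary sections on the general fibers.
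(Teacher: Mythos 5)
Your plan is the standard ``tie-breaking + Nadel vanishing'' argument, and it has a genuine gap at its core: you cannot arrange, with a coefficient budget depending only on $d$ and $\mathcal{P}$, a single divisor $\Gamma\sim_\bQ\gamma K_V$ (plus a negligible $\eta f^*H$) whose multiplier ideal is exactly $\mathcal{I}_{X_1\cup X_2}$. The hypothesis only says that $X$ is \emph{some} irreducible non-klt center of $(V,\delta K_V)$; it gives no control on the rest of ${\rm Nklt}(V,\Delta)$, and standard tie-breaking (as in the paper's Lemma \ref{perturb}) only makes $X$ a \emph{pure, exceptional} lc center --- it does not shrink the non-klt locus to $X_1\cup X_2$, nor does it force $\mathcal{J}(V,\Gamma)$ to be the reduced ideal along a center of codimension $n-d>1$ (higher-codimensional centers typically produce deeper, non-reduced multiplier ideals away from the generic point). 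Isolating the fibers as the whole non-klt locus would require positivity of $K_V$ in the directions normal to $X_i$ inside $V$ (your ``Seshadri-type constants''), and that is a property of $V$, not of the bounded fibers; it is not uniform in $d,\mathcal{P}$, and nothing in the hypotheses bounds it. A second, related failure point is the vanishing itself: Nadel needs $(p-1)K_V-\Gamma$ nef and big, but $K_V$ is merely big; absorbing the negative part $N_\sigma(K_V)$ (or the $E_V$ from Kodaira's lemma, which enters with coefficient about $p-1$, not small) into the multiplier ideal changes $\mathcal{J}$ along $X_i$ and makes the target of your surjection strictly smaller than $H^0(X_i,pK_{X_i})$ --- you would only lift sections vanishing on some subscheme of $X_i$, so the claimed surjectivity does not follow.

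Controlling exactly this loss is the actual content of the paper's proof, which proceeds quite differently: it blows up to a log resolution $\nu:W\to V$ and uses the unique lc place $E$ over $X$ as a divisorial bridge, so that Kawamata--Viehweg vanishing can be applied with the nef and big divisor $P_\sigma(\nu^*K_V)$ to get surjectivity of $H^0(W,\nu^*(pK_V)-L)\to H^0(E,g^*(pK_X)-L|_E)$; the uniform constant $t$ comes from Birkar's boundedness of lc thresholds (Theorem \ref{BAB2}) applied on the bounded fiber $X$ together with a bounded generation degree $l$ of the fiber canonical rings, and is used, via adjunction for the klt-trivial fibration $g:(E,\Gamma_{1,E}+\Gamma_{2,E}-F_E)\to X$, to show the induced sub-pair on $E$ is sub-klt; the pseudo-effectivity of $g^*(\delta K_X)-\Gamma_{2,E}$ and, crucially, the Hacon--McKernan extension theorem are then used to prove the inequality $(1+\delta)N_\sigma(\nu^*K_V)|_E\le N_\sigma(g^*K_X)+Q$ with $Q$ controlled, which is precisely what guarantees that no sections of $pK_X$ are lost when the non-nef part of $K_V$ is subtracted. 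Your proposal contains no substitute for this comparison of Nakayama negative parts, and without it the approach fails for fibers of codimension greater than one.
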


We briefly explain the main idea of this paper. In fact, very little information is known about the canonical stability index sequence $\{r_n\}$. It is believed to be strictly increasing, which is, however, an open question so far (see \cite[Conjecture 1.4]{Lac23}).  If one can show that $r_n\geq nr_{n-1}+2$, then the main statement of this paper holds according to a result of Lacini \cite{Lac23}. Our starting point is \cite[Question 6.6]{CJ17}, proposed by the first author and Jiang, which directly induces the solution to the main problem. Hence the key part of this paper is Section 3 where Theorem \ref{key} (a high co-dimensional version of the extension theorem) is proved. More precisely, if $f:V\to T$ is a fibration with $\dim (T)>1$, then one has the difficulty in directly using the vanishing theorem to study the restriction map
$H^0(mK_V)\to H^0(mK_X)$, where $X$ is the general fiber. Roughly speaking, our solution is to blow up $X$ and to use the exceptional divisor over $X$ as a bridge connecting the total space upstairs and X. Nakayama's Zariski decomposition, the adjunction for klt-trivial fibrations, Birkar's boundedness theorem (Theorem \ref{BAB2}) and Hacon-McKernan's extension theorem (Theorem \ref{extend}) play central roles in our argument. Details can be found in Step 1 through Step 8 in the proof of Theorem \ref{main1}.

\section{Preliminary}

\subsection{Divisors, linear systems and contractions.}
Let $D$ be an $\bR$-divisor on a variety $X$ and $P$  a prime divisor. Then denote by $\mu_P({D})$ the coefficient of $P$ in $D$. Write  $D = \sum d_i D_i$, where each $D_i$ is a prime divisor and $d_i \in \bR$. Define the {\em roundup} of $D$ to be $\roundup{D}:=\sum{\roundup{d_i}D_i}$, the {\em rounddown} of $D$ to be $\rounddown{D}:= \sum{\rounddown{d_i}D_i}$ and the {\em fractional part} of $D$ to be $\fraction{D} := \sum{\fraction{d_i}D_i}$ where $\fraction{d_i} = d_i - \rounddown{d_i}$. We say that $D$ is {\em effective} if $d_i \geq 0$ for all $i$. Given a positive real number $a$, we say that $D < a$ (resp. $D \leq a$) if $d_i < a$ (resp. $d_i \leq a$) for all $i$. \par 
Let $\{\hat{D}_j | j \in J\}$ be a set of effective $\bR$-divisors where $J$ is a set of index and write $\hat{D}_j = \sum_{i} d_{j,i} D_{j,i}$, where each $D_{j,i}$ is a prime divisor and $d_{j,i} \geq 0$ for all $j$ and $i$. Define the {\em infimum} of $\{\hat{D}_j| j \in J\}$ to be $\inf\{ \hat{D}_j |j \in J\}:= \sum_{i} (\inf_{j \in J}d_{j,i})D_{j,i}$. Assume that $J$ is an infinite subset of $\bN$, that $\liminf_{j \to \infty}d_{j,i} < +\infty$ for all $i$ and that there is only finitely many $i$ with $\liminf_{j \to \infty}d_{j,i} \ne 0$, then define $\liminf_{j \to \infty} \hat{D}_{j} := \sum_{i}(\liminf_{j \to \infty}d_{j,i}) D_{j,i}$, which is also an $\bR$-divisor on $X$. If, furthermore, $\lim_{j \to \infty} d_{j,i}$ exists for all $i$, we define $\lim_{j \to \infty} \hat{D}_{j} := \sum_{i}(\lim_{j \to \infty}d_{j,i}) D_{j,i}$.\par

Let $D$ be an integral effective divisor on a normal projective variety $X$. The {\em linear system} $|D|$ is defined as $|D|:=\{D' \geq 0| D' \sim D\}$ and the {\em $\bR$-linear system} $|D|_{\bR}$ is defined as $|D|_{\bR}:=\{D' \geq 0| D' \sim_{\bR} D\}$. The {\em fixed part} of $|D|$ is defined as  $\fix{D}:=\text{inf}\{ D' |D' \in |D|\}$. The {\em base locus} of $|D|$ is defined to be ${\rm Bs}|D| := \bigcap \{ D' | D' \in |D|\}$.
For a prime divisor $\Gamma$ on $X$, define $\sigma_{\Gamma}(D)_{\bZ} := \mu_{\Gamma}(\fix{D})$.

Let $f : X \to Z$ be a projective morphism between varieties. We say that $f$ is a {\em contraction} if $f_*\mathcal{O}_X = \mathcal{O}_Z$. In particular, a contraction is surjective and has connected fibers.

We give a generalization of \cite[Theorem 2.7]{C01}.

\begin{lemma}\label{m1}
Let $Y$ be a smooth projective variety of dimension at least 2 and $X$ be a smooth irreducible divisor on $Y$. Let $D$ and $E$ be two integral effective divisors on $Y$ such that
\begin{itemize}
\item[(1)] $X$ is not a component of $D + E$, and
\item[(2)] the image of the restriction map
$$H^0(Y,D+E) \to H^0(X, D|_X + E|_X)$$ contains the image of the inclusion
$$i:H^0(X,D|_X) \to H^0(X, D|_X + E|_X),$$ which is induced by the effective divisor $E|_X$.
\end{itemize}
Denote by $F$ the fixed part of $|D+E|$ on Y, and by $Z_X$ the fixed part of $|D|_X|$ on $X$.
Then we have the following inequality
$$F|_X \leq Z_X + E|_X.$$
\end{lemma}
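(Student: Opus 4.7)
The plan is to produce, for each element of $|D|_X|$, a lift in $|D+E|$ via hypothesis (2), compare the lift against the fixed part $F$, and then pass to an infimum on the right-hand side. The first observation is that, since $E|_X$ is an effective Cartier divisor on $X$, the inclusion $i$ is nothing but multiplication by the canonical section of $E|_X$; consequently, for any nonzero $s \in H^0(X, D|_X)$, the zero divisor of $i(s)$ on $X$ equals $(s) + E|_X$.

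The first main step is the lifting. I would fix an arbitrary element $D_X' \in |D|_X|$, write $D_X' = (s)$ for some nonzero $s \in H^0(X, D|_X)$, and invoke hypothesis (2) to produce some $\tilde s \in H^0(Y, D+E)$ whose restriction to $X$ equals $i(s)$. Because $i(s) \neq 0$, the section $\tilde s$ does not vanish identically along $X$, so $X$ is not a component of the associated divisor $\tilde D := (\tilde s) \in |D+E|$, and the restriction identity gives $\tilde D|_X = D_X' + E|_X$.

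The second main step is the comparison with $F$. By definition $F \leq \tilde D$; in particular $F \leq D+E$, and hypothesis (1) excludes $X$ from the support of $D+E$, so $X$ is not a component of $F$ either. The effective difference $\tilde D - F$ therefore has zero $X$-coefficient, which means restriction to $X$ is compatible with the inequality, yielding
\[F|_X \leq \tilde D|_X = D_X' + E|_X.\]
Letting $D_X'$ vary over $|D|_X|$ and taking the infimum on the right-hand side concludes $F|_X \leq Z_X + E|_X$, as desired.

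The main obstacle is purely formal book-keeping: one must ensure that every restriction to $X$ yields a genuine effective divisor, i.e., that $X$ appears as a component of neither $F$ nor $\tilde D$. Both exclusions are elementary --- for $F$ it follows from hypothesis (1) together with $F \leq D+E$, and for $\tilde D$ from the nonvanishing of $i(s)$ --- but they are indispensable for the restriction step to make sense. Once these are secured, no cohomological or vanishing-theorem input is needed; the argument is essentially a translation between sections and their zero divisors.
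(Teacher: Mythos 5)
Your proposal is correct and follows essentially the same route as the paper: lift each member $D'_X \in |D|_X|$ to an effective $G \in |D+E|$ with $G|_X = D'_X + E|_X$ via hypothesis (2), use $F \leq G$ together with the fact that $F$ is independent of the choice of $D'_X$, and take the infimum; your extra bookkeeping that $X$ is a component of neither $F$ nor the lift (needed for the restriction to make sense) is a welcome, if minor, addition of detail. The only loose phrase is deducing $F \leq D+E$ "in particular" from $F \leq \tilde D$; the correct (and immediate) justification is that $D+E$ is itself an effective member of $|D+E|$, but this does not affect the argument.
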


\begin{proof}
By the second condition, for any effective divisor $D'_X \sim D|_X$ on $X$, there exists an effective divisor $G \sim D+E$ on $Y$ such that $G|_X = D'_X + E|_X$. By the definition of the fixed part, we know $F \leq G$ and that $F$ does not depend on the choice of $D'_X$. Thus
$$F|_X \leq \inf\{D'_X \geq 0|D'_X \sim D|_X\} + E|_X = Z_X + E|_X,$$
so we finish the proof.
\end{proof}

\subsection{Pairs and singularities.}
A {\em sub-pair} $(X,\Delta)$ consists of a normal quasi-projective variety $X$ and an $\bR$-divisor $\Delta$ such that $K_X + \Delta$ is $\bR$-Cartier. A sub-pair $(X, \Delta)$ with $\Delta \geq 0$ is called a {\em pair}. If, additionally, $\Delta$ is a $\bQ$-divisor, we call $(X,\Delta)$ a $\bQ$-pair. If a pair $(X,\Delta)$ satisfies that $X$ is smooth and that the support of $\Delta$ is of simple normal crossing, we say that $(X,\Delta)$ is a {\em log smooth pair}. \par
Let $(X,\Delta)$ be a sub-pair. Let $f:Y \to X$ be a log resolution of $(X,\Delta)$ and write $K_Y + \Gamma = f^*(K_X + \Delta)$. Then define the {\em log discrepancy} of a prime divisor $D$ on $Y$ with respect to the sub-pair $(X,\Delta)$ to be 
$$a(D,X,\Delta) := 1-\mu_D{(\Gamma)}.$$
Fix a number $\epsilon > 0$. We say $(X,\Delta)$ is {\em sub-lc} (resp., {\em sub-klt}, {\em sub-$\epsilon$-lc}) if $a(D,X,\Delta) \geq 0$ (resp., $>0$, $\geq \epsilon$) for every prime divisor $D$ over $X$. If, in addition,  $(X,\Delta)$ is a pair, we say $(X,\Delta)$ is {\em lc}  (resp., {\em klt}, {\em $\epsilon$-lc}). We say $(X, \Delta)$ is {\em terminal} if $a(D,X,\Delta) > 1$ for any prime divisor $D$ which is exceptional over $X$.

Let $(X,\Delta)$ be a sub-pair and take a log resolution $f:Y \to X$. A {\em non-klt place} is a prime divisor $D$ over $X$ with $a(D,X,\Delta) \leq 0$ and a {\em non-klt center} is the image of a non-klt place on $X$. The {\em non-klt locus} of $(X,\Delta)$ is the union of all non-klt centers of $(X,\Delta)$ which is denoted as ${\rm Nklt}(X,\Delta)$.  An irreducible component of ${\rm Nklt}(X,\Delta)$
is called an {\em irreducible non-klt center}.  An {\em lc place} is a prime divisor $D$ over $X$ with $a(D,X,\Delta) = 0$ and an {\em lc center} is the image of an lc place on $X$. We say that an lc center $G$ of $(X, \Delta)$ is a {\em pure lc center} if $(X,\Delta)$ is lc at the generic point of $G$. We say that an lc center $G$ is an {\em exceptional lc center} if there is a unique lc place over $X$ whose image is $G$ where the unique lc place is called {\em the exceptional lc place}. \par
Let $(X,\Delta)$ be an lc pair and $L \geq 0$ be an $\bR$-Cartier $\bR$-divisor. Define the {\em lc threshold} of $L$ with respect to $(X,\Delta)$ to be 
$$\lct(X,\Delta,L) := \sup\{t\in \bR|(X,\Delta + tL) {\text{ is lc}}\}.$$ 
Let $A$ be another $\bR$-Cartier $\bR$-divisor. Define the {\em lc threshold} of the $\bR$-linear system $|A|_{\bR}$ with respect to $(X,\Delta)$ to be
$$\lct(X,\Delta,|A|_{\bR}) := \inf\{\lct(X,\Delta,L)|0 \leq L \sim_\bR A\}.$$\par

\begin{lemma}\label{perturb}
Let $(X,\Delta)$ be a klt pair. Assume that
\begin{itemize} 
\item[(1)] $D\geq 0 $ is a big $\bR$-Cartier $\bR$-divisor and write $D \sim_{\bR} A + E$, where $A$ is ample and $E \geq 0$;
\item[(2)] $(X,\Delta + D)$ is not klt with an irreducible non-klt center $G$ such that $G$ is not contained in any component of $E$.
\end{itemize}
  Then,  for any $\epsilon >0$, there exist two real numbers $s$, $t$ with $0 \leq t < s  \leq 1$, $s + t < 1 + \epsilon$ and an effective divisor $D' \sim_\bR (s +t) D$ such that $G$ is a pure and exceptional lc center of $(X,\Delta + D')$.\par
If, in addition, $(X,\Delta)$ is a $\bQ$-pair and $D$ is a $\bQ$-divisor, we can take $s,\ t \in \bQ$ and $D'$ a $\bQ$-divisor.
\end{lemma}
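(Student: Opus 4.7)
The plan is to use a Kawamata-style tie-breaking argument, exploiting the ampleness in the decomposition $D \sim_\bR A + E$. The idea is first to drop the coefficient of $D$ to the log canonical threshold along $G$, so that $G$ becomes an lc center of an lc (at the generic point of $G$) pair, and then to perturb by an effective $\bR$-divisor drawn from the ample class $A$ in order to single out $G$ as a pure and exceptional lc center.

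Explicitly, I would set
$$s := \sup\{c \in [0,1] : (X, \Delta + cD) \text{ is klt at the generic point of } G\}.$$
Since $(X,\Delta)$ is klt and $G$ is an irreducible non-klt center of $(X,\Delta+D)$, one has $0 < s \leq 1$, and the semicontinuity of log discrepancies shows that $(X, \Delta+sD)$ is lc at the generic point of $G$ with $G$ as an lc center. If $G$ is already pure and exceptional, take $t=0$ and $D'=sD$. Otherwise, fix a log resolution $f:Y\to X$ of $(X,\Delta+D+E)$, designate one lc place $F_1$ with $f(F_1)=G$ as the intended exceptional place, and for small $0 < \delta < s$ and $0 < t < \epsilon$ use the ampleness of $A$ (Kodaira's lemma and sections with prescribed vanishing order) to produce an effective $\bR$-divisor $H \sim_\bR (t+\delta)A$ whose pullback satisfies $\mu_{F_1}(f^*H) = \delta\, \mu_{F_1}(f^*D)$ and $\mu_{F_j}(f^*H) < \delta\, \mu_{F_j}(f^*D)$ for every other lc place $F_j$ of $(X, \Delta+sD)$ with $f(F_j) \supseteq G$. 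Then set
$$D' := (s-\delta)D + (t+\delta)E + H.$$
The identity $(s-\delta)(A+E) + (t+\delta)E + (t+\delta)A = (s+t)(A+E)$ yields $D' \sim_\bR (s+t)D$, while $0 \leq t < s \leq 1$ and $s+t < 1+\epsilon$ hold for $\delta,t$ sufficiently small.

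To verify that $G$ is a pure and exceptional lc center of $(X, \Delta + D')$, the hypothesis $G \not\subseteq \operatorname{Supp}(E)$ forces $\mu_F(f^*E) = 0$ for every prime divisor $F$ over $X$ with $f(F) \supseteq G$ (any irreducible $f(F)$ containing $G$ is not contained in any component of $E$), and a direct log discrepancy computation then yields
$$a(F, X, \Delta + D') = \delta\, \mu_F(f^*D) - \mu_F(f^*H)$$
for every lc place $F$ of $(X, \Delta + sD)$ with $f(F) \supseteq G$. By construction this vanishes exactly when $F = F_1$ and is strictly positive otherwise, so $G$ emerges as a pure lc center with unique lc place $F_1$, i.e.\ pure and exceptional. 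The $\bQ$-version follows by keeping all choices rational throughout. The main obstacle will be the construction of $H$ with the prescribed multiplicity profile along the lc places of $(X,\Delta+sD)$, which is the technical heart of Kawamata's tie-breaking trick and uses the ampleness of $A$ together with the hypothesis $G \not\subseteq \operatorname{Supp}(E)$ essentially.
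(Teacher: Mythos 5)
Your proposal is the same Kawamata tie-breaking argument the paper uses: reduce the coefficient of $D$ to the lc threshold along $G$, then exploit the ample summand $A$ together with the hypothesis that $G$ lies in no component of $E$ (which forces $\mu_F(f^*E)=0$ for every divisor $F$ whose center contains $G$) to perturb within the class of a small multiple of $D$ and single out a unique lc place, keeping the total coefficient $s+t<1+\epsilon$. The construction of $H$ that you defer as the ``main obstacle'' is exactly what the paper carries out: on a log resolution it makes $f^*A-\sum c_iE_i$ ample for small generic $c_i$, takes a general member $H_1$, pushes forward to get $D_1\sim_{\bR}D$, and perturbs the coefficients $c_i$ so that precisely one divisor mapping onto $G$ attains coefficient $1$ --- your prescribed multiplicity profile realized by generic exceptional coefficients (and the genericity of $H_1$ is what guarantees purity beyond the finitely many lc places you check).
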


\begin{proof}
Take a log resolution $f:Y \to X$ of $(X, \Delta + D)$ and write
\begin{eqnarray*}
 f^*(K_X + \Delta)&=& K_Y + \tilde{\Delta} + \sum{a_i E_i}, \\ 
f^*(D) &=& \tilde{D} + \sum{b_i E_i}, 
\end{eqnarray*}
where $\tilde{\Delta},\tilde{D}$ are the biratianal transform of $\Delta, D$, respectively, on $Y$ and all $E_i$ are $f$-exceptional divisors on $Y$.\par
Then we take $F:= \sum{c_i E_i}$ to be an $\bR$-divisor on $Y$ supported on the exceptional locus with sufficiently small positive coefficients such that $H:=f^*A - F$ is ample on $Y$. Take a general $ H_1 \sim_\bR H$ and define $A_1 := f_* H_1$ and $D_1 = A_1 + E$. Then $D_1 \sim_\bR D$. By the negativity lemma \cite[Lemma 3.3]{Bir12} and the direct computation, one has
$$f^*(D_1) = H_1 + \sum{c_i E_i} + E',$$
where $E':=f^*E$. We need to mention that $H_1$ does not contain any exceptional divisors since it is general and that $E'$ does not contain any exceptional divisors mapping onto $G$ since $E$ does not contain $G$.\par
Next, we pick a sufficiently small real number $t \in [0,\epsilon)$, and let $s$ be the largest number such that $(X, \Delta + tD_1 + sD)$ is lc at the generic point of $G$. It follows that $s\leq 1$. We have
$$f^*(K_X + \Delta + tD_1 + sD) = K_Y + \tilde{\Delta} + tH_1 + tE' + s \tilde{D} + \sum{(a_i + t c_i + s b_i) E_i}.$$
After appropriately perturbing the coefficients $c_i$, we may and do assume that there exists a unique $E_j$ mapping onto $G$ such that the coefficient $a_j + t c_j + s b_j$ is equal to $1$. In other words, $E_i$ is the unique lc place of $(X, \Delta + tD_1 + sD)$ whose image on $X$ is $G$. Finally, let $D':=tD_1 + sD$, which satisfies all the requirements.\par
For the $\bQ$-divisor case, we can assume that  $A$, $E$, $F$, $H_1$ are $\bQ$-divisors and take $t$ to be a rational number. It follows that $s$ is also a rational number. Thus $D'$ is a $\bQ$-divisor. 
\end{proof}

We need to use the following useful lemma in this paper. 

\begin{lemma}\label{m1.5} (cf. \cite[Proposition 2.36(1)]{Kol98})    Let $(X,\Delta)$ be a sub-klt sub-pair. Then there is a log resolution $\nu : Y \to X$ of $(X, \Delta)$ such that, if we write $$K_Y + \Gamma  - F = \nu^*(K_X + \Delta)$$
    where $\Gamma$ and $F$ are effective with no common components, any two components of $\Gamma$ do not intersect. In particular, $(Y,\Gamma - F)$ is terminal. 
\end{lemma}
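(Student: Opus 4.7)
The plan is to start from an arbitrary log resolution $\nu_0\colon Y_0\to X$ of $(X,\Delta)$ and iteratively refine it by smooth blow-ups along centers contained in pairwise intersections of components of $\Gamma$, aiming to ultimately separate all such components. Writing $K_{Y_0}+\Gamma_0-F_0=\nu_0^*(K_X+\Delta)$ with $\Gamma_0,F_0\ge 0$ sharing no common components, the sub-klt hypothesis is equivalent to every coefficient of $\Gamma_0$ being strictly less than $1$.

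The blow-up step is controlled as follows. At each intermediate stage, if two components of the current $\Gamma$ meet along a smooth codimension-$c$ center $C$, one blows up $C$. If $C$ sits inside components $D_{i_1},\dots,D_{i_s}$ ($s\le c$) of the current boundary $\Gamma-F$ with coefficients $d_{i_1},\dots,d_{i_s}$, the new exceptional divisor $E$ acquires coefficient $\sum_\ell d_{i_\ell}-(c-1)$ in the pulled-back boundary. Because the $\Gamma$-coefficients among the $d_{i_\ell}$ are strictly $<1$ and the $F$-coefficients are non-positive, this new coefficient is strictly below $1$, so $E$ either joins the new $\Gamma$ with coefficient $<1$ (preserving sub-klt) or joins the new $F$. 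Termination of this procedure at a log resolution $\nu\colon Y\to X$ whose $\Gamma$-components are pairwise disjoint is the content of \cite[Proposition~2.36(1)]{Kol98}; it is achieved by a carefully chosen blow-up order together with a descending combinatorial invariant (a lexicographic function on the multiset of intersection-coefficient sums is a natural choice). The strict sub-klt inequality is precisely what forces such an invariant to strictly decrease, and this termination step is the main technical obstacle of the argument, since a naive greedy scheme can temporarily create more intersections than it resolves.

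For the \emph{in particular} claim, consider an arbitrary smooth blow-up $\pi\colon Y'\to Y$ at a codimension-$c'\ge 2$ center $C'\subset Y$ lying inside the SNC locus of $\Gamma+F$. Disjointness of the $\Gamma$-components on $Y$ forces at most one of them to pass through $C'$, so the sum of $(\Gamma-F)$-coefficients through $C'$ is strictly below $1$. Hence the exceptional divisor $E'$ has log discrepancy $a(E',Y,\Gamma-F)=c'-\sum d_i>c'-1\ge 1$; moreover its own coefficient is $\sum d_i-(c'-1)<0$, so $E'$ joins $F$ on $Y'$ and the disjointness of $\Gamma$-components is inherited. Iterating over any tower of smooth blow-ups above $Y$ therefore shows every exceptional divisor has log discrepancy $>1$, so $(Y,\Gamma-F)$ is terminal.
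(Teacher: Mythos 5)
The paper does not actually prove this lemma: it is quoted wholesale from \cite[Proposition 2.36(1)]{Kol98}, so there is no internal argument to compare against. Your sketch reproduces the standard mechanism behind that citation, and the bookkeeping is right: for a blow-up along a smooth codimension-$c$ center contained in $s\leq c$ boundary components the exceptional coefficient is $\sum_\ell d_{i_\ell}-(c-1)$, which stays $<1$ because each $\Gamma$-coefficient is $<1$ and $F$-components enter with non-positive sign; and sub-kltness is indeed equivalent to all $\Gamma_0$-coefficients being $<1$ on one (hence any) log resolution. Note, however, that you defer the only genuinely delicate point --- termination of the separation procedure --- back to the very same citation, with only a gesture toward a ``lexicographic invariant''; so your write-up does not supply a proof of the key combinatorial step, it re-cites it. That is consistent with the paper's own level of detail, but it should be stated as a citation rather than as an argument.

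The one place where your argument, as written, has a gap is the ``in particular'' claim. Checking terminality by ``iterating over any tower of smooth blow-ups above $Y$'' with centers inside the SNC locus is not exhaustive: an arbitrary divisorial valuation over $Y$ is extracted by a tower whose centers need not be smooth, need not lie in the SNC locus, and need not preserve simple normal crossings of the transformed boundary, so your induction does not reach all exceptional divisors over $Y$. The clean fix is the standard discrepancy computation for snc pairs (\cite[Lemma 2.29 and Corollary 2.31]{Kol98}): for $Y$ smooth and $B=\Gamma-F$ with snc support and all coefficients $\leq 1$, the total discrepancy is controlled by the quantities $1-b_i$ and, for intersecting pairs, $1-b_i-b_j$. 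Since the $\Gamma$-components are pairwise disjoint, every intersecting pair involves at most one coefficient in $(0,1)$ and otherwise non-positive coefficients of $-F$, so all these quantities are strictly positive and $(Y,\Gamma-F)$ is terminal in the sense of the paper (log discrepancy $>1$ for every exceptional divisor). With that substitution your proposal is correct and is essentially the intended argument.
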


\subsection{The canonical volume}\label{vol}

Let $X$ be a normal projective $n$-fold with $\bQ$-factorial and at worst canonical singularities. Then we define the {\em canonical volume} of $X$ to be:
$$\vol(X) = \varlimsup\limits_{m\to\infty}\frac{h^0(X,mK_X)}{m^n/n!}.$$
The {\em geometric genus} is defined  as $p_g(X):=h^0(X,K_X)$. Note that the canonical volume and the geometric genus are both birational invariants.\par

Here we recall a useful inequality between the canonical volume and the geometric genus: 

\begin{lemma}\label{Noether} 
(\cite[Theorem 5.1]{CJ17}) Let $n>0$ be any integer. There exist positive numbers $a_n$ and $b_n$ depending only on the dimension $n$ such that, for all smooth projective n-folds $X$ of general type, the following inequality holds:
$$\vol(X) \geq a_n p_g(X) - b_n.$$
\end{lemma}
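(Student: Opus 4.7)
I would prove this by analyzing the canonical map $\Phi_{|K_X|}$ and splitting into cases based on its image dimension. First, by absorbing the small-$p_g$ case into a large choice of $b_n$, it suffices to handle the regime $p_g(X) \gg n$. Take a log resolution $\pi:X' \to X$ so that $|\pi^*K_X|$ decomposes into a base-point-free moving part $|M|$ and a fixed part $Z$, and let $f: X' \to Y \subset \bP^{p_g-1}$ be the induced morphism to the image, with $M = f^*H$ for $H$ a very ample divisor on $Y$. Set $d := \dim Y$. Since $Y$ is non-degenerate in $\bP^{p_g-1}$, a classical projective-geometry estimate gives the degree bound $\deg_H Y \geq p_g(X) - d$.

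If $d = n$, the map $f$ is generically finite and $M$ is nef and big, so by the monotonicity of volume and the fact that $K_{X'} \geq M$, one concludes
$$\vol(X) = \vol(K_{X'}) \geq \vol(M) = M^n = (\deg f)\cdot \deg Y \geq p_g(X) - n,$$
which is the desired inequality (with $a_n = 1$) in this case.

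The substantial case is $d < n$. Let $F$ be a general fiber of $f$; by Iitaka's easy addition $\kappa(X) \leq \kappa(F) + \dim Y$ together with $\kappa(X) = n$, the fiber $F$ is of general type. Applying the result of Hacon–McKernan–Takayama–Tsuji to $F$, one has the uniform positive lower bound $\vol(F) \geq 1/r_{n-d}^{n-d}$, which depends only on $n-d$. The plan is then to establish a product-type inequality of the form
$$\vol(X) \geq C(n)\cdot \deg Y \cdot \vol(F)$$
for a positive constant $C(n)$ depending only on $n$. On the intersection-theoretic side this is natural: write $K_{X'} = M + Z$ with $Z$ effective, note $M|_F = 0$ gives $Z|_F = K_F$ by adjunction, and observe that $M^k = 0$ for $k > d$ because $M = f^*H$. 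Expanding $(M+Z)^n$ and keeping only the term with $k = d$ yields formally
$$K_{X'}^n \ \geq\ \binom{n}{d} M^d\cdot Z^{n-d} = \binom{n}{d}\deg Y\cdot (Z|_F)^{n-d} = \binom{n}{d}\deg Y \cdot \vol(F),$$
where the middle equality comes from the fact that a generic intersection of $d$ members of $|M|$ is a disjoint union of $\deg Y$ general fibers of $f$. Combining this with the degree bound and the lower bound for $\vol(F)$ produces the desired linear estimate.

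The main obstacle I anticipate is legitimising the step $\vol(X) = \vol(K_{X'}) \geq K_{X'}^n$: since $K_{X'}$ need not be nef, $\vol$ and top self-intersection can differ, so the formal expansion above gives only an intersection number, not a volume. I would handle this either by invoking a Nakayama–Zariski decomposition $K_{X'} = P + N$ with $P$ movable and $N$ effective, passing to a birational model on which the positive part becomes nef (so that $\vol(K_{X'}) = P^n$) and verifying $M \leq P$; or, more directly, by replacing the intersection computation with its asymptotic Riemann–Roch counterpart applied to $|mK_{X'}|$ for $m \gg 0$, using Viehweg's weak positivity of $f_*\omega_{X'/Y}^{\otimes m}$ together with surjectivity of restriction to general fibers (via Ohsawa–Takegoshi type extension) to get
$$h^0(X',mK_{X'}) \gtrsim h^0(Y,mH)\cdot h^0(F,mK_F),$$
from which the desired volume inequality follows upon dividing by $m^n/n!$ and letting $m \to \infty$. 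This asymptotic route is what I would actually carry out in detail.
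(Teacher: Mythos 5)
The paper offers no proof of this lemma; it simply quotes \cite[Theorem 5.1]{CJ17}, so your task was in effect to reconstruct that argument. Your reductions are sound: the degree bound $\deg Y\geq p_g(X)-d$ for the nondegenerate canonical image, the generically finite case via $\vol(X)\geq\vol(M)=M^{n}\geq\deg Y\geq p_g(X)-n$, the fact that the general fiber $F$ is of general type by easy addition, and the uniform bound $\vol(F)\geq r_{n-d}^{-(n-d)}$ from Hacon--McKernan/Takayama/Tsuji. But the fibered case $d<n$ is the entire content of the lemma, and there your argument stops exactly where the difficulty begins.

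The formal expansion of $(M+Z)^{n}$ is invalid, as you concede ($Z$ is not nef, so $M^{d}\cdot Z^{n-d}$ has no sign; also $Z|_F=K_F-E|_F$ for the $\pi$-exceptional correction $E$, and the top self-intersection of this non-nef divisor is not $\vol(F)$ and need not dominate it; moreover the fibers of $X'\to Y$ need not be connected, so one must first Stein-factorize). The proposed repair is a claim rather than a proof. Set $Z':=K_{X'}-f^{*}H\geq 0$; then $h^{0}(X',mK_{X'})=h^{0}\bigl(Y,\OO_Y(mH)\otimes f_*\OO_{X'}(mZ')\bigr)$ and $f_*\OO_{X'}(mZ')=f_*\omega_{X'/Y}^{\otimes m}\otimes\OO_Y(m(K_Y-H))$, a torsion-free sheaf of generic rank $h^{0}(F,mK_F)$. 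Your product bound $h^{0}(mK_{X'})\gtrsim h^{0}(Y,mH)\cdot h^{0}(F,mK_F)$ amounts to producing, generically, roughly rank-many independent sections of this sheaf after twisting by only a small fraction of $mH$. Viehweg's weak positivity of $f_*\omega_{X'/Y}^{\otimes m}$ tolerates twists by a fixed ample divisor, not by $m(K_Y-H)$, and $K_Y$ can be arbitrarily negative (canonical images are frequently rational), so weak positivity does not deliver this. Ohsawa--Takegoshi-type extension does not help either: extending pluricanonical sections from a general fiber of codimension $d\geq 1$ is precisely the delicate point that Section 3 of this paper (Theorem \ref{key}) is devoted to, and it is obtained there only under strong extra hypotheses (the fiber is a non-klt center of $(V,\delta K_V)$ with $\delta$ small, plus boundedness of the fibers); and even full surjectivity of $H^{0}(X',mK_{X'})\to H^{0}(F,mK_F)$ would only give $h^{0}(mK_X)\geq h^{0}(F,mK_F)$, nowhere near the product. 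So the decisive inequality $\vol(X)\geq C(n)\,\deg Y\cdot\vol(F)$ is asserted, not proved; that is exactly the work carried out in \cite[Section 5]{CJ17}, and as it stands your proposal proves the lemma only when the canonical map is generically finite.
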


Particularly, whenever $p_g(X)$ is sufficiently large, so is $\vol(X)$.

\subsection{Nakayama-Zariski decomposition.}

We mainly refer to \cite[Chapter III]{N04} for the following definitions.\par

Let $X$ be a complex smooth projective variety. For a big $\bR$-divisor $D$ on $X$ and any prime divisor $\Gamma$, define 
$$\sigma_{\Gamma}(D) := \inf\{\mu_{\Gamma}(D')| 0 \leq D' \sim_\bR D\}.$$
Then $\sigma_{\Gamma}$ only depends on the numerical class of $D$ and it is a continuous function on the big cone. If $D$ is pseudo-effective, define $\sigma_{\Gamma}(D) := \lim_{\epsilon \to 0}\sigma_{\Gamma}(D + \epsilon A)$, where $A$ is any ample divisor. Note that this definition does not depend on the choice of $A$.
Then define
$N_\sigma(D) := \sum_{\Gamma}\sigma_{\Gamma}(D)\Gamma$
and $P_\sigma(D):= D -N_\sigma(D).$
We call the decomposition $D = P_\sigma(D) + N_\sigma(D)$ the {\em $\sigma$-decomposition}. If $P_\sigma(D)$ is nef, we say that $D$ {\em admits a Zariski-decomposition}.\par
Let $f: X \to Y$ be a projective surjective morphism between smooth varieties and $D$ be a $f$-pseudo-effective $\bR$-divisor on $X$. Then one may similarly define the relative version of $\sigma$-decomposition. In fact, on $X$, assume that $D$ is $f$-big and $\Gamma$ is a prime divisor. Then define 
$$\sigma_{\Gamma}(D;X/Y) := \inf\{\mu_{\Gamma}(D')| 0 \leq D' \sim_\bR D \text{ over $Y$}\}.$$

If $D$ is $f$-pseudo-effective, define $\sigma_{\Gamma}(D;X/Y):= \lim_{\epsilon \to 0}\sigma_{\Gamma}(D + \epsilon A;X/Y)$ for a relatively ample divisor $A$. Again it is seen that the definition does not depend on the choice of $A$. If we have $\sigma_{\Gamma}(D;X/Y) < + \infty$ for any prime divisor $\Gamma$ on $X$, then we define that
$$N_\sigma(D;X/Y) := \sum_{\Gamma}\sigma_{\Gamma}(D;X/Y)\Gamma,$$
and that $P_\sigma(D;X/Y):= D -N_\sigma(D;X/Y)$.

We list some basic properties about the $\sigma$-decomposition. 
\begin{lemma}\label{m2}
Let $X$ be a smooth projective variety, $D$ be a pseudo-effective $\bR$-divisor on $X$ and $f:X \to Z$ be a projective surjective morphism. Then
\begin{enumerate}
\item[(1)] $N_\sigma(D) \geq N_\sigma(D;X/Z)$ always holds. 
\item[(2)] If $D$ is integral and effective, then
$$H^0(X,D) = H^0(X, D - \roundup{N_\sigma(D)}).$$
\item[(3)] If $D$ is big, $N_\sigma(D) = \lim_{m \to +\infty}{\frac{1}{m}{\rm Fix}|\rounddown{mD}|}$.

\item[(4)] There is an ample divisor $A_0$, such that for any ample divisor $A$ with $A-A_0$ ample, we have $$N_\sigma(D) = \lim_{m \to +\infty}{\frac{1}{m}{\rm Fix}|\rounddown{mD} + A|}.$$
\item[(5)] If $D$ is big and the ring $R(X,D):= \oplus_{m=0}^\infty H^0(X,\rounddown{mD})$ is a finitely generated $\bC$-algebra, then there exists a birational morphism $g : Y \to X$ from a smooth projective variety such that $P_\sigma(g^*D)$ is a semi-ample $\bQ$-divisor. In particular, $g^*D$ admits a Zariski-decomposition.
\end{enumerate}
\end{lemma}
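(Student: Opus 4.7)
The lemma collects five standard properties of the Nakayama--Zariski decomposition (see \cite[Chapter III]{N04}); my plan is to derive each directly from the definitions, with only the last part requiring real input beyond unwinding notation.

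For (1), first treat the big case: any $0 \leq D' \sim_{\bR} D$ is in particular relatively effective over $Z$, so the infimum defining $\sigma_\Gamma(D;X/Z)$ is taken over a set at least as large as the one defining $\sigma_\Gamma(D)$, giving $\sigma_\Gamma(D) \geq \sigma_\Gamma(D;X/Z)$ for every prime $\Gamma$. Extending to pseudo-effective $D$ is automatic once one perturbs by $\epsilon A$ with $A$ ample and lets $\epsilon \to 0$, since the termwise inequality survives the limit. For (2), any $D' \in |D|$ satisfies $\mu_\Gamma(D') \geq \sigma_\Gamma(D)$ by definition; since $\mu_\Gamma(D') \in \bZ$, actually $\mu_\Gamma(D') \geq \roundup{\sigma_\Gamma(D)}$, so $D' \geq \roundup{N_\sigma(D)}$ for every $D' \in |D|$. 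Subtracting the common base divisor then identifies $H^0(X, D - \roundup{N_\sigma(D)})$ with $H^0(X, D)$.

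For (3) and (4), I would use the standard subadditivity trick: multiplication of sections gives $\fix{L_1 + L_2} \leq \fix{L_1} + \fix{L_2}$, so for each fixed prime $\Gamma$ the sequence $e_m := \mu_\Gamma(\fix{\rounddown{mD}})$ is essentially subadditive, whence $\lim_m e_m/m$ exists; bigness of $D$ guarantees $|\rounddown{mD}| \neq \emptyset$ for $m \gg 0$. Comparing this limit with the infimum defining $\sigma_\Gamma(D)$, via rational approximation of $\bR$-linear equivalences and continuity of $\sigma_\Gamma$ on the big cone, yields (3). For (4), the same argument applies to $e'_m := \mu_\Gamma(\fix{\rounddown{mD} + A})$; the key point is that for $A$ with $A - A_0$ ample and $A_0$ sufficiently positive, adding $A$ perturbs the fixed part only by a $\Gamma$-contribution that is uniformly $O(1)$, hence $o(m)$ after dividing by $m$, so the limit is unchanged.

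The main obstacle is (5), which genuinely uses finite generation. Pick $m_0 > 0$ so that the truncated ring $\bigoplus_{k \geq 0} H^0(X, km_0 D)$ is generated in degree one, and let $g : Y \to X$ be a log resolution of the base ideal of $|\rounddown{m_0 D}|$. On $Y$ the linear system $|g^*(m_0 D)|$ decomposes as $|M| + F$, with $|M|$ base point free and $F \geq 0$; degree-one generation then forces $\fix{g^*(km_0 D)} = kF$ for every $k \geq 1$. Applying (3) on $Y$ therefore gives $N_\sigma(g^* D) = F/m_0$, so $P_\sigma(g^* D) = M/m_0$ is semi-ample, establishing the Zariski decomposition. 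The subtlest step here, beyond the bookkeeping, is the promotion of the integral identity $\fix{g^*(km_0 D)} = kF$ to the $\bR$-divisor equality $N_\sigma(g^* D) = F/m_0$, which I would handle by combining part (3) on $Y$ with the vanishing of $N_\sigma$ on nef classes.
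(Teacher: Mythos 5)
Parts (1) and (2) of your proposal are fine and coincide with the paper, which treats them as immediate from the definitions; parts (3) and (5) are simply quoted by the paper from \cite[III, Remark 1.17]{N04}, and your sketches for them follow the standard route (your (5) is essentially the classical finite-generation argument, though note $D$ is only an $\bR$-divisor, so $\rounddown{km_0D}\neq k\rounddown{m_0D}$ in general and the base-ideal-power step needs a word; in (3) the inequality $\limsup_m\frac1m\mu_\Gamma(\fix{\rounddown{mD}})\leq\sigma_\Gamma(D)$, i.e.\ the comparison of honest linear systems with $\bR$-linear equivalence, is the nontrivial half and is only waved at).

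The genuine gap is in (4), which is precisely the one statement the paper proves in detail. Your key assertion --- that for $A_0$ ``sufficiently positive'' the divisor $A$ perturbs the fixed part only by an $O(1)$ contribution along $\Gamma$ --- is unsubstantiated and cannot be extracted from a comparison of $\fix{\rounddown{mD}+A}$ with $\fix{\rounddown{mD}}$: statement (4) concerns a pseudo-effective, possibly non-big $D$, for which $|\rounddown{mD}|$ may be empty for every $m$, and in any case adding an ample divisor only shrinks fixed parts, so no such comparison controls the limit. What is actually needed are two estimates. For the upper bound $\varlimsup_m\frac1m\sigma_{\Gamma}(\rounddown{mD}+A)_{\bZ}\leq\sigma_\Gamma(D)$ one must know that ${\rm Bs}|\rounddown{mP_\sigma(D)}+A|$ contains no divisor, \emph{uniformly in $m$}, for one fixed suitable $A$; this is exactly Nakayama's theorem \cite[V, Theorem 1.3]{N04} (the locus ${\rm NBs}(P_\sigma(D))$ is a countable union of codimension $\geq 2$ subvarieties and a single $A_0$ works for all $m$ at once), and it is how the paper chooses $A_0$ --- your proposal never specifies $A_0$ and offers no substitute for this input. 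For the lower bound $\varliminf_m\frac1m\sigma_{\Gamma}(\rounddown{mD}+A)_{\bZ}\geq\sigma_\Gamma(D)$, which is also not automatic since the ample summand could a priori remove fixed components, the paper applies part (3) to the big divisors $kD+A$ and uses continuity of $\sigma_\Gamma$ on the big cone, $\sigma_\Gamma(D+(1/k)A)\to\sigma_\Gamma(D)$. Your Fekete-type subadditivity also does not apply to the family $\rounddown{mD}+A$, because the ample part is not additive in $m$. As written, your argument for (4) would fail; it needs the Nakayama-type uniform base-locus statement exactly as used in the paper.
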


\begin{proof} Statements (1) and (2) are obtained directly from definitions. Statements  (3) and (5) are due to  \cite[III, Remark 1.17]{N04}.\par
Now we prove Statement (4), for which we mainly refer to \cite[V, Corollary 1.7(1)]{N04}. Let $W:= {\rm NBs}(P_\sigma(D))$ which is defined as \cite[III, Definition 2.6]{N04}. Then, by \cite[V, Page 168]{N04}, $W$ is a countable union of subvarieties of codimension at least $2$. In fact, by \cite[V, Theorem 1.3]{N04}, there is an ample divisor $A_0$ such that for any ample divisor $A$ with $A-A_0$ ample and, for any $x \in X \setminus W$ and any $m \geq 0$, we have $x \not\in {\rm Bs}|\rounddown{mP_\sigma(D)} + A|$. We can further assume that $A$ has no common components with $D$ and $N_{\sigma}(D)$.\par
Then we only need to prove that, for any prime divisor $\Gamma$, 
$$ \lim_{m \to + \infty}{\frac{1}{m}\sigma_{\Gamma}(\rounddown{mD} + A)_{\bZ}} = \sigma_{\Gamma}(D).$$

Since for any prime divisor $\Gamma$, $\Gamma \setminus W$ is non-empty, we can take a closed point $y \in \Gamma \setminus W$. Since  $y \not\in {\rm Bs}|\rounddown{mP_\sigma(D)} + A|$, we see that
$$\sigma_{\Gamma}(\rounddown{mP_\sigma(D)} + A)_{\bZ} = 0.$$
Thus one has
\begin{align*}
\sigma_{\Gamma}(\rounddown{mD} + A)_{\bZ} &\leq \mu_{\Gamma}(\rounddown{mD} - \rounddown{mP_\sigma(D)}) \\
&\leq \mu_{\Gamma}(\roundup{mN_{\sigma}(D)}) \leq m\sigma_{\Gamma}(D) + 1.
\end{align*}
Therefore 
$$\varlimsup_{m \to \infty} \frac{1}{m}\sigma_{\Gamma}(\rounddown{mD} + A)_{\bZ} \leq \sigma_{\Gamma}(D).$$

On the other hand, for any $k > 0$, any sufficienly large real number $m$, and any $\Gamma$ with $\sigma_{\Gamma}(D) > 0$,
$$\frac{1}{mk}\sigma_{\Gamma}(\rounddown{mkD} + A)_{\bZ} \geq \frac{1}{mk} \sigma_{\Gamma}(\rounddown{m(kD + A)})_\bZ.$$
Since $kD + A$ is big, Statement (3) implies that, for any $k > 0$, 
\begin{align*}
    \varliminf\limits_{m \to \infty}\frac{1}{m}\sigma_{\Gamma}(\rounddown{mD} + A)_{\bZ} &= \varliminf\limits_{m \to \infty}\frac{1}{mk}\sigma_{\Gamma}(\rounddown{mkD} + A)_{\bZ} \\
    &\geq \frac{1}{k} \varliminf\limits_{m \to \infty}{\frac{1}{m}\sigma_{\Gamma}(\rounddown{m(kD + A)})_{\bZ}} \\
    &= \frac{1}{k}\sigma_{\Gamma}(kD + A) = \sigma_{\Gamma}(D + (1/k)A).
\end{align*}
Then, taking the limit as $k\to \infty$, we get
$$\varliminf\limits_{m \to \infty}\frac{1}{m}\sigma_{\Gamma}(\rounddown{mD} + A)_{\bZ} \geq \sigma_{\Gamma}(D).$$
The lemma is proved. 
\end{proof}

\subsection{Vertical divisors}
Let $f:X \to Y$ be a contraction between normal projective varieties and $D$ be an $\bR$-divisor on $X$. We say that $D$ is {\em horizontal} over $Y$ or {\em $f$-horizontal} if the support of $f(D)$ dominates $Y$, otherwise we say $D$ is {\em vertical} over $Y$ or {\em $f$-vertical}. We say that $D$ is {\em very exceptional} over $Y$ if $D$
satisfies the following conditions:
\begin{enumerate}
    \item [(1)] $D$ is vertical over $Y$;
    \item [(2)] For any prime component $P$ of $D$, either $f(P)$ has codimension at least $2$ in $Y$, or there exists another prime divisor $Q$ on $X$ such that $f(Q) = f(P)$, but $Q$ is not contained in the support of $D$.
\end{enumerate}
By definition, whenever $\Codim f(D) \geq 2$, $D$ is automatically very exceptional.

\begin{lemma}\label{m3} (cf. \cite[III, Proposition 5.7]{N04}) Let $f:X \to Y$ be a contraction between smooth projective varieties and $D$ be an effective $\bR$-divisor on $X$. Assume that $D$ is very exceptional over $Y$. Then $D = N_\sigma(D;X/Y)$.
\end{lemma}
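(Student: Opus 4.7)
The plan is to establish $\sigma_\Gamma(D;X/Y) = \mu_\Gamma(D)$ for every prime divisor $\Gamma$ on $X$, which is equivalent to $D = N_\sigma(D;X/Y)$. The inequality $\sigma_\Gamma(D;X/Y) \leq \mu_\Gamma(D)$ follows immediately from the definitions, by substituting $D$ itself (or, in the $f$-pseudo-effective but non-$f$-big case, $D$ plus a general effective $\bR$-linearly equivalent representative of the ample perturbation) into the defining infimum. Setting $P := D - N_\sigma(D;X/Y) \geq 0$, we have $\mathrm{Supp}(P) \subseteq \mathrm{Supp}(D)$, and since every component of $P$ is a component of $D$, the very-exceptionality condition of $D$ passes to $P$. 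The task reduces to proving $P = 0$.

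I will derive $P = 0$ from Birkar's negativity lemma for very exceptional divisors (cf.~\cite[Lemma 3.3]{Bir12}), in the form: if $E$ is an $\bR$-Cartier $\bR$-divisor on $X$ whose positive part is very exceptional over $Y$ and such that $-E$ is $f$-nef, then the positive part of $E$ vanishes. To apply this I exploit the defining property of $P = P_\sigma(D;X/Y)$, namely $\sigma_\Gamma(P;X/Y) = 0$ for every prime divisor $\Gamma$ on $X$. Fix a relatively ample $A$ on $X$, which may be taken effective with $\mathrm{Supp}(A)$ disjoint from $\mathrm{Supp}(D)$ after passing to a general effective $\bR$-linearly equivalent representative. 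Unfolding the limit definition, for any $\eta > 0$ and all sufficiently small $\delta > 0$ there exists an effective $\bR$-divisor $P_\delta$ with $P_\delta \sim_\bR P + \delta A$ over $Y$ and $\mu_\Gamma(P_\delta) < \eta$ for every component $\Gamma$ of $P$.

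Suppose for contradiction that $P \neq 0$, and choose $\eta$ smaller than every positive coefficient of $P$. Decompose $P_\delta = P_\delta^{(1)} + P_\delta^{(2)}$ according to whether a component lies in $\mathrm{Supp}(P)$; then $E_\delta := P - P_\delta^{(1)}$ is an effective, nonzero, very exceptional $\bR$-divisor, and the relation
\[
E_\delta + \delta A \sim_\bR P_\delta^{(2)} \quad \text{over } Y
\]
expresses $E_\delta - P_\delta^{(2)}$ as $\bR$-linearly equivalent over $Y$ to $-\delta A$. Passing to a subsequential limit as $\delta \to 0$ -- valid because the coefficients of $E_\delta$ are uniformly bounded by those of $P$ -- produces an effective, nonzero, very exceptional $\bR$-divisor $E$ and an effective $F$ with $\mathrm{Supp}(E) \cap \mathrm{Supp}(F) = \emptyset$ and $E \equiv F$ over $Y$. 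Then $E - F$ is $f$-numerically trivial with very exceptional positive part $E$, and Birkar's negativity lemma forces $E = 0$, the desired contradiction.

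The main technical hurdle will be the realization of the limiting class $[F] \in N^1(X/Y)$ by an actual effective divisor $F$, rather than merely a pseudo-effective class, since the coefficients of $P_\delta^{(2)}$ are not a priori bounded as $\delta \to 0$. I intend to address this by working in the relative N\'eron--Severi space and invoking boundedness of effective representatives of a fixed bounded numerical class over $Y$, in the spirit of Nakayama's treatment in \cite[Chapter III]{N04}; alternatively, one may run the Birkar-style negativity estimate quantitatively at each fixed $\delta > 0$, absorbing the $\delta A$ correction into an $O(\delta)$ error term and then passing to the limit at the very end of the argument.
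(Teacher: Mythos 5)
The paper itself offers no proof of this lemma --- it is quoted directly from \cite[III, Proposition 5.7]{N04} --- so there is no internal argument to compare with; your proposal has to stand on its own, and at its crucial step it does not.

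Your reduction to showing $P:=P_\sigma(D;X/Y)=0$ is fine, as is the remark that very exceptionality is inherited by divisors supported inside ${\rm Supp}(D)$, and the intended final blow via the very-exceptional negativity lemma of \cite[Theorem 3.3]{Bir12} would indeed conclude if you actually had a nonzero effective very exceptional $E$ and an effective $F$ with ${\rm Supp}(E)\cap{\rm Supp}(F)=\emptyset$ and $E\equiv F$ over $Y$. The genuine gap is exactly the step you flag: producing $F$. At fixed $\delta$ you only have $F_\delta:=P_\delta^{(2)}$ with $F_\delta-E_\delta\sim_\bR \delta A$ over $Y$, so $-(F_\delta-E_\delta)$ is anti-ample over $Y$ --- the wrong sign for the negativity lemma, which has no tolerance for an anti-nef error term; and letting $\delta\to 0$ only shows that the class of $E$ in $N^1(X/Y)$ is a limit of classes of effective divisors, i.e.\ relatively pseudo-effective, which is vacuous since $E$ is already effective. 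It does not produce an effective representative $F$ disjoint from $E$: the $F_\delta$ have a priori unbounded coefficients and varying supports, and effective representatives of a fixed relative numerical class are not bounded in general, so neither of your proposed repairs is available as stated. Indeed, configurations $E_\delta+\delta A\sim_\bR F_\delta$ over $Y$ with $F_\delta\geq 0$ disjoint from $E_\delta$ do occur for divisors that are movable over $Y$; excluding this behaviour in the limit for very exceptional divisors is precisely the content of \cite[III, Proposition 5.7]{N04} and of \cite[Theorem 3.3]{Bir12}, so your key step implicitly assumes a statement of essentially the same strength as the lemma. Two smaller points: the simultaneous choice of $P_\delta$ with $\mu_\Gamma(P_\delta)<\eta$ at \emph{all} components of $P$ at once, and the identity $\sigma_\Gamma(P_\sigma(D;X/Y);X/Y)=0$, are true but are themselves facts from Nakayama's relative theory (\cite[Chapter III]{N04}) rather than consequences of "unfolding the definition"; they would need citation or proof. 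Given all this, the honest options are either to cite \cite[III, Proposition 5.7]{N04} as the paper does, or to give a genuine proof along Nakayama's or Birkar's lines rather than the soft limiting argument proposed.
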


\begin{lemma}\label{m4}
Let $f:X \to Z$ be a contraction between smooth projective varieties. Let $Q$ be an effective $\bR$-divisor on $X$ and be vertical over $Z$. Assume that $A$ is a very ample divisor on $Z$ such that $f^*(aA) - Q$ is pseudo-effective for a positive number $a$. 
Then there exists an effective divisor $\Delta \sim_{\bR} 2aA$ such that $f^*(\Delta) \geq Q$.
\end{lemma}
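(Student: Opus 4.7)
The plan is to split $Q$ according to the codimension of the images of its prime components in $Z$, handle each part separately, and combine. Write $Q = Q_1 + Q_2$, where $Q_1$ collects the prime components $P_\alpha$ of $Q$ with $\Codim_Z f(P_\alpha) = 1$ and $Q_2$ the rest. For $Q_1$, let $T_1,\ldots,T_r$ be the distinct prime divisors on $Z$ appearing as $f(P_\alpha)$ for some component of $Q_1$; for each such $T_j$ and each component $P_\alpha$ of $Q_1$ with $f(P_\alpha) = T_j$, set $c_\alpha := \mu_{P_\alpha}(f^*T_j) > 0$ and $q_\alpha := \mu_{P_\alpha}(Q)$, and define $b_j := \max_\alpha q_\alpha/c_\alpha$. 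Then $f^*\bigl(\sum_j b_j T_j\bigr) \geq Q_1$ automatically by construction.

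The central numerical claim is that $aA - \sum_j b_j T_j$ is pseudo-effective on $Z$. I would prove this by reducing to a fibration over a curve: pick general $A_1,\ldots,A_{p-1} \in |A|$ (with $p := \dim Z$), set $C := A_1 \cap \cdots \cap A_{p-1}$ (a smooth irreducible curve meeting each $T_j$ transversally) and $Y := f^{-1}(C) \subset X$. By a standard complete-intersection argument using the nefness of each $f^*A_i$, the restricted class $f^*(aA)|_Y - Q|_Y$ remains pseudo-effective on $Y$; here $f^*(aA)|_Y$ is numerically equivalent to $(aA^p)\cdot F$ for $F$ a fiber class of $f|_Y$, while $Q|_Y$ is supported in the fibers of $f|_Y$ over $\bigcup_j(T_j \cap C)$. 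Intersecting with horizontal multisections of $f|_Y$ extracts the scalar inequality $aA^p \geq \sum_j b_j (T_j \cdot A^{p-1})$, and varying the $A_i$ through the covering family of such complete intersections upgrades this to pseudo-effectivity of $aA - \sum_j b_j T_j$ on $Z$. Since $A$ is ample, for any $\epsilon > 0$ the class $aA - \sum_j b_j T_j + \epsilon A$ is big, hence $\bR$-linearly equivalent to an effective divisor, producing $\Delta_1 \sim_\bR (a+\epsilon)A$ effective with $\Delta_1 \geq \sum_j b_j T_j$, so that $f^*\Delta_1 \geq Q_1$. For $Q_2$, each component has codim-$\geq 2$ image in $Z$; combining the very-ampleness of $A$ with numerical multiplicity bounds derived from the pseudo-effectivity of $f^*(aA) - Q$ (applied analogously to higher-codimension test cycles $A^{p-1} \cdot [\text{auxiliary}]$), I would construct $\Delta_2 \sim_\bR (a-\epsilon)A$ effective with $f^*\Delta_2 \geq Q_2$. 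Setting $\Delta := \Delta_1 + \Delta_2 \sim_\bR 2aA$ then yields an effective $\bR$-divisor with $f^*\Delta \geq Q$.

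The main obstacle is extracting the numerical inequality on $Y \to C$ rigorously: pseudo-effective vertical $\bR$-divisors on a fibration over a curve do not decompose fiberwise in general, so horizontal test multisections must be chosen carefully, exploiting the nefness of $f^*A|_Y$ and standard intersection-theoretic positivity. A secondary difficulty is absorbing the $Q_2$ contribution into a single extra copy of $aA$: this is precisely where very-ampleness of $A$ (as opposed to mere ampleness) is essential, since $|aA|$ must contain sections vanishing with arbitrary bounded order on any prescribed codim-$\geq 2$ subvariety, which is what lets us build $\Delta_2$ inside one copy of $aA$ and explains the factor of $2$ in the statement.
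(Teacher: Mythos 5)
There is a genuine gap, in fact two, and both occur exactly at the steps you flag as delicate. First, your central claim for $Q_1$: varying the general members $A_1,\dots,A_{p-1}\in|A|$ does not ``upgrade'' anything, because the resulting intersection number $\bigl(aA-\sum_j b_jT_j\bigr)\cdot A^{p-1}$ is independent of the choice of members; all your construction can possibly yield is this single scalar inequality, and positivity against the one class $A^{p-1}$ does not imply pseudo-effectivity of $aA-\sum_j b_jT_j$ (by BDPP you would need nonnegativity against \emph{all} movable curve classes). Even the extraction of that scalar inequality is unjustified: a pseudo-effective class pairs nonnegatively only with moving curves, not with an arbitrarily chosen horizontal multisection of $Y\to C$, and a moving multisection meets \emph{all} components of the fibers over $T_j\cap C$, so the pairing controls weighted sums of the $q_\alpha$ rather than the extremal ratios $b_j=\max_\alpha q_\alpha/c_\alpha$; isolating the component achieving the maximum would require a rigid multisection, against which pseudo-effectivity gives nothing. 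Second, the $Q_2$ step fails as described: for a component $S$ of $Q_2$, the requirement $f^*\Delta_2\geq Q_2$ means $v_S(\Delta_2)\geq \mu_S(Q)$ for the divisorial valuation $v_S$ over $Z$ with center $f(S)$ of codimension $\geq 2$, and very ampleness of $A$ does not let you realize prescribed multiplicities along such centers inside the \emph{fixed} class $(a-\epsilon)A$ (multiplicities of effective divisors in a fixed class are bounded); the only available input is again the pseudo-effectivity of $f^*(aA)-Q$, which your argument has not transferred down, so the $Q_2$ case is just the same problem in disguise, not an easier one, and the factor $2$ is not explained by it.

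For comparison, the paper's proof avoids the $Q_1/Q_2$ dichotomy altogether by first flattening $f$ via a birational base change $\sigma:Z'\to Z$ (so that on the flat model every vertical divisor has divisorial image), then defining $P:=\inf\{S\geq 0 : f'^*S\geq \pi^*Q\}$ on $Z'$ and observing that the discrepancy $E=f'^*P-\pi^*Q$ is \emph{very exceptional} over $Z'$; Lemma \ref{m3} (Nakayama) gives $N_\sigma(E';X'/Z')=E'$, and combining this with $N_\sigma\geq N_\sigma(\ \cdot\ ;X'/Z')$ and \cite[II, Lemma 5.6(2)]{N04} one concludes that $\sigma^*(aA)-P$ is pseudo-effective on $Z'$. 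This relative Zariski-decomposition argument is precisely the mechanism your intersection-theoretic sketch is missing. The second copy of $aA$ then serves only to convert ``pseudo-effective'' into ``effective'': $\sigma^*(2aA)-P$ is nef and big plus pseudo-effective, hence $\bR$-linearly equivalent to an effective $\Theta$, and $\Delta:=\sigma_*(\Theta+P)$ works after descending with the negativity lemma. If you want to salvage your outline, the pseudo-effectivity of $aA-\sum_j b_jT_j$ (and the treatment of $Q_2$) should be routed through flattening and Lemma \ref{m3} rather than through curve intersections.
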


\begin{proof}
By the flattening theorem (see \cite[Chapter 3]{Ray72},\cite{H75},\cite[Theorem 3.3]{Vil06}), we can take a birational morphism $\sigma: Z' \to Z$ that flattens $f:X \to Z$ such that $Z'$ is smooth.  Denote by $U$ the normalization of the main component of $X \times_{Z} Z'$. Then the induced map $\pi: U \to X$ is birational and $f': U \to Z$ is flat. Let $\nu: X' \to U$ be a resolution of $U$, $\rho:= \pi \circ \nu : X' \to X$ and $g:= f' \circ \nu: X' \to Z'$.
$$ 
\xymatrix{
X' \ar[r]^\nu \ar[dr]_{g} \ar@/^1.3pc/[rr]^{\rho}&	U \ar[d]_{f'} \ar[r]^{\pi} & X \ar[d]^{f}\\
 &Z' \ar[r]^{\sigma} & Z
}$$

Since $f'$ is flat, we see that for any $f'$-vertical divisor $D$ on $U$, $f'(D)$ is a divisor on $Z'$. 

Let $P:=\inf\{S \geq 0 \text{ on $Z'$} | f'^*S \geq \pi^*Q\}$. Then $P$ is well-defined. Set $E:= f'^*P - \pi^*Q$. Then $E \geq 0$ and $E$ is very exceptional over $Z'$. Indeed, for any component $D$ of $E$, if we set $S:=f'(D)$ on $Z'$ and there is a prime divisor $D'$ on $U$ such that $f'(D') = S$ and the coefficient of $D'$ in $f'^*P - \pi^*Q$ is zero by the definition of $P$. 

Let $E' := \nu^*E$ on $X'$. Then $E'$ is also very exceptional over $Z'$. Hence $N_\sigma(E';X'/Z') = E'$ by Lemma \ref{m3}. \par

Since $ g^*(\sigma^*(aA) - P) + E' =  \rho^*(f^*(aA) - Q )$ is pseudo-effective, 
\begin{align*}
N_\sigma(g^*(\sigma^*(aA) - P) + E') &\geq N_\sigma(g^*(\sigma^*(aA) - P) + E';X'/Z') \\
&= N_\sigma(E';X'/Z') = E'.
\end{align*}
Then $g^*(\sigma^*(aA) - P) \geq P_\sigma(g^*(\sigma^*(aA) - P) + E')$, which is pseudo-effective. Hence $\sigma^*(aA) - P$ is pseudo-effective by \cite[Chapter II, Lemma 5.6(2)]{N04}. Since $\sigma^*(A)$ is nef and big, there exists an effective divisor $\Theta \sim_\bR \sigma^*(2aA) - P$ and then take $\Delta = \sigma_*(\Theta + P)$, which satisfies our requirement. Indeed, 
since $\Theta + P$ is relatively trivial over $Z$, $\sigma^*(\Delta) = \Theta + P$ by the negativity lemma \cite[Theorem 3.3]{Bir12}.
So it follows that $\rho^*(f^*\Delta - Q) = g^*\Theta + E' \geq 0$. Since $\rho$ is birational, we have $f^*(\Delta) - Q \geq 0 $.
\end{proof}
\subsection{B-divisors} We recall the definition of b-divisors introduced by Shokurov. For details, one may refer to \cite{Amb04} and \cite{Fuj12} for instance. \par
Let $\mathbb{K}$ be $\bZ$, $\bQ$ or $\bR$. Let $X$ be a normal projecitve variety. A {\em  b-$\mathbb{K}$-divisor} $\mathbf{D}$ of $X$ consists of a family of $\mathbb{K}$-divisors $\{D_{X'}\}$ indexed by all the birational models $X'$ of $X$ which satisfies that, for any birational morphism $\nu : X'' \to X'$ between two birational models $X'$,$X''$ of $X$, $\nu_*D_{X''} = D_{X'}$ holds.  \par
Let $\mathbf{K}$ consist of $\{K_{X'}\}$ for all birational models $X'$, where $K_{X'}$ is the canonical divisor defined by a top rational differential form $(\omega)$ of $X$ such that $\nu_*K_{X''} = K_{X'}$ for any birational morphism $\nu: X'' \to X'$. We call $\mathbf{K}$ the {\em canonical b-divisor} of $X$.\par
Let $(X,B)$ be a sub-pair. We recall the definition of the {\em discrepancy b-divisor} $\mathbf{A}(X,B):=\{A_{X'}\}$ as in \cite[2.3]{FG14} and  \cite[Definition 3.6]{Fuj12}. For any birational morphism $\nu : X' \to X$ from a normal variety $X'$, we define $$A_{X'}:=K_{X'} - \nu^*(K_X + B).$$
Then we define $\OO_{X}(\roundup{\mathbf{A}(X,B)}) :=\tilde{\nu}_*\OO_{Y}(\roundup{A_{Y}})$ where $\tilde{\nu}: Y \to X$ is any log resolution of $(X,B)$.
\subsection{Adjunction.}{\label{adjunction}} We recall the adjunction for fiber spaces, for which we mainly refer to \cite{Amb99},\cite{Amb04} and \cite{FG14}.\par
\begin{defi}
{\em A {\em klt-trivial fibration} $f: (X,B) \to Z$ consists of a contraction $f:X \to Z$ between normal projective varieties and a sub-pair $(X,B)$, which satisfies the following conditions:
\begin{enumerate}
    \item[(1)] $(X,B)$ is sub-klt over the generic point of $Z$;
    \item[(2)] ${\rm rank}\ f_*\OO_{X}({\roundup{\mathbf{A}(X,B)}}) =1$;
    \item[(3)] $K_X + B \sim_{\bR,f} 0$ over $Z$.
\end{enumerate}}
\end{defi}
We will do adjunction for a klt-trivial fibration $f: (X,B) \to Z$. For any prime divisor $P_i$ on $Z$, define
$$t_i := \sup\{a\in \bR|(X,B + af^*P_i)\text{ is lc over the generic point of }P_i\}.$$
Then define $B_Z := \sum(1-t_i)P_i$, where the sum is taken over all prime divisors on $Z$. Then it is seen that $B_Z$ is a divisor and we call $B_Z$ the {\em discriminant part} of $(X,B)$ on $Z$ (see \cite[3.4]{FG14}). Then there exists an $\bR$-divisor $M_Z$ such that 
$K_X + B \sim_\bR f^*(K_Z + B_Z + M_Z).$ We call $M_Z$ the {\em moduli part} of $(X,B)$ on $Z$. Note that $M_Z$ is only determined up to $\bR$-equivalence.\par

Here we list several properties about adjunction for fiber spaces.

\begin{enumerate}
\item[(i)] Let $\nu:X' \to X$ be a birational morphism and $K_{X'} + B' = \nu^*(K_X + B)$ be the crepant pullback. One may do adjunction for $f\circ\nu : (X', B') \to Z$. Then the discriminant part of $(X', B')$ on $Z$ is the same as $B_Z$. As being observed in \cite[Remark 3.1]{Amb99}, while doing adjunction, we are free to take a crepant model.

\item[(ii)] Let $\sigma:Z' \to Z$ be a birational morphism from a normal projective variety $Z'$. We may take a crepant model $(X', B')$ of $(X, B)$ and assume that $f$ induces a contraction $f': X' \to Z'$. If we do adjunction for $f': (X',B') \to Z'$, we can define the discriminant part $B_{Z'}$ and the moduli part $M_{Z'}$ of $(X',B')$ on $Z'$. Then we have $\sigma_*B_{Z'} = B_Z$. Although the moduli part is only defined up to $\bR$-linear equivalence, we can take a compatible moduli part $M_{Z'}$ so that
$\sigma_*M_{Z'} = M_Z$. Then we have
$$K_{Z'} + B_{Z'} + M_{Z'} = \sigma^*(K_Z + B_Z + M_Z).$$
Moreover, let $A$ be an $\bR$-divisor on $Z$ and set $A':= \sigma^*(A)$ on $Z'$. If we do adjunction for $f: (X, B + f^*A) \to Z$, by direct computations, we will get the discriminant part $B_Z + A$ and the moduli part $M_{Z}$. Similarly, if we do adjunction for $f' : (X', B' + {f'}^*(A')) \to Z'$,  the discriminant part and the moduli part will be $B_{Z'} + A'$ and $M_{Z'}$, respectively.
\end{enumerate} 

\begin{defi} {\em 
    Let $f : (X, B) \to Z$ be a klt-trivial fibration. Let $\sigma : Z' \to Z$ be a birational morphism from a normal projective variety. We say that $Z'\to Z$ is an {\em excellent base extension with regard to $f : (X, B) \to Z$} if the following property holds:  
    \begin{quote} 
        Let $(X', B')$ be a crepant model of $(X,B)$ and assume that $f$ induces a contraction $f' : X' \to Z'$. If we do adjunction for $f': (X',B') \to Z'$, the moduli part $M_{Z'}$ is nef and, furthermore, for any birational morphism $Z'' \to Z'$ from a normal projective variety  $Z''$, the respective moduli part $M_{Z''}$ is the pullback of $M_{Z'}$ (and, hence, is nef as well). 
    \end{quote}}
\end{defi}

\begin{lemma}\label{m4.5}(see \cite[Theorem 0.2]{Amb04})
    Let $f : (X, B) \to Z$ be a klt-trivial fibration. If we assume that $(X,B)$ is a $\bQ$-sub-pair, then there exists an excellent base extension with regard to $f : (X, B) \to Z$. 
\end{lemma}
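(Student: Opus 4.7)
The plan is to follow the approach of Ambro in \cite{Amb04}, and in particular to argue that the moduli part of a $\bQ$-coefficient klt-trivial fibration is what Shokurov calls a \emph{b-nef and b-good} divisor, which is precisely the content of an excellent base extension. First, using property (i) from \S\ref{adjunction}, I may pass to a log resolution so that $(X,B)$ is log smooth, the support of $B$ and the union of all non-reduced or singular fibers of $f$ sit inside a simple normal crossings divisor on $X$, and the image on $Z$ of the non-smooth locus of $f$ is also a simple normal crossings divisor. Such a reduction leaves the discriminant unchanged and only shifts the moduli part by $\bQ$-linear equivalence, so it is harmless.

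Next, I would carry out a semistable reduction in codimension one on $Z$. Because $B$ is a $\bQ$-divisor, the local monodromies of the variation of Hodge structure on the smooth locus of $f$ along each prime divisor of the discriminant locus have finite order, with denominators controlled by the coefficients of $B$. Applying Kawamata's covering trick (a Kummer cover along the discriminant divisor with sufficiently large divisibility), one obtains a finite Galois cover $\tilde Z\to Z$ and, after resolution, a new klt-trivial fibration $\tilde f : (\tilde X,\tilde B)\to\tilde Z$ whose general fiber has unipotent monodromy around each boundary component. At this point Deligne's canonical extension of the bottom piece of the Hodge filtration of $R^d\tilde f_\ast\bC_{\tilde X^{0}}$ (with $d=\dim X-\dim Z$) produces a line bundle on $\tilde Z$ whose class agrees with the moduli part $M_{\tilde Z}$.

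The crux is then to show two things on $\tilde Z$: first, that $M_{\tilde Z}$ is nef, and second, that the formation of $M$ commutes with further birational pullbacks from $\tilde Z$. The nefness is the Fujita--Kawamata--Kollár semipositivity theorem applied to the canonical extension of the bottom Hodge bundle in a semistable setting; this is the central hard input, and I expect it to be the main obstacle because it requires the full machinery of variation of Hodge structures together with Griffiths' curvature estimates at the boundary. For the second property, once semistable reduction has been achieved the Hodge-theoretic construction is functorial: pulling back along any further birational morphism $Z''\to\tilde Z$, the canonical extension pulls back, hence so does $M$. This is exactly what property (ii) in \S\ref{adjunction} demands in order to conclude that $M_{\tilde Z}$ descends to the b-divisor level.

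Finally, to land inside the framework of the lemma, I would take $\sigma: Z'\to Z$ to be a birational model of $Z$ dominated by $\tilde Z$ and dominating $Z$; for instance, resolve the quotient $\tilde Z/\mathrm{Gal}(\tilde Z/Z)\to Z$. Since $\tilde Z\to Z'$ is generically finite and $M_{\tilde Z}$ is nef and descends, a standard push-down argument (dividing by the degree of the Galois cover, using that the moduli part behaves multiplicatively under finite covers in the klt-trivial setting) yields that $M_{Z'}$ is itself nef and that $M_{Z''}=\sigma_1^\ast M_{Z'}$ for every birational $\sigma_1:Z''\to Z'$. Thus $\sigma:Z'\to Z$ is the desired excellent base extension. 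The $\bQ$-hypothesis is used in two places only: to bound denominators so that the Kummer cover exists, and to ensure that Ambro's adjunction formula remains $\bQ$-coefficient throughout, both of which are standard.
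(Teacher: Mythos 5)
The paper does not actually prove this lemma: it is Ambro's Theorem 0.2 restated in the ``excellent base extension'' language, and the proof is delegated entirely to \cite{Amb04}. Your sketch therefore amounts to an outline of Ambro's own argument, and at the level of strategy it matches that source: reduction to a log smooth model with SNC discriminant, unipotent reduction in codimension one via a Kawamata cover, Deligne's canonical extension, Fujita--Kawamata--Koll\'ar semipositivity for the nefness, and functoriality of canonical extensions for the stability under further pullbacks.

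If the sketch were meant to stand on its own, two points would need repair. First, for $B \neq 0$ the moduli part is not the bottom Hodge piece of $R^d f_*\bC$ of the original family of fibers; one must choose $b$ with $b(K_X+B)\sim 0$ over the generic point of $Z$, pass to the associated degree-$b$ cyclic cover, and extract the relevant eigenspace of its variation of Hodge structure --- this is where the $\bQ$-coefficient hypothesis genuinely enters, beyond merely bounding monodromy orders. Second, the descent step is garbled: since the Kawamata cover $\widetilde{Z}\to Z$ can be taken Galois, the quotient $\widetilde{Z}/\mathrm{Gal}$ you propose to resolve is just $Z$ again, and the moduli part is not ``multiplicative'' under finite covers, so there is no dividing by the degree. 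The correct mechanism, and the real theorem hiding behind your ``standard push-down'', is Ambro's base-change invariance $M_{\widetilde{Z}}=\tau^*M_{Z'}$ for the induced klt-trivial fibration (which uses the rank-one condition on $f_*\OO_X(\roundup{\mathbf{A}(X,B)})$); nefness and pullback-stability of $M_{Z'}$ then follow by the projection formula. One also fixes the birational model $Z'$ (SNC discriminant, flattening) first and only then takes the finite cover of $Z'$, rather than the other way around.
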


The next lemma with regard to singularities of adjunction is useful to our argument. 

\begin{lemma}\label{m5}
Let $f:(X,B) \to Z$ be a klt-trivial fibration. Assume that there exists an excellent base extension  $\sigma: Z' \to Z$ with regard to $f: (X,B) \to Z$. Let $(X', B')$ be a crepant model of $(X,B)$ and assume that $f$ induces a contraction $f' : X' \to Z'$ and denote by $B_{Z'}$ and $M_{Z'}$ the discriminant part and the moduli part on $Z'$ of $f' : (X',B') \to Z'$. Then $(X,B)$ is sub-klt provided that $(Z',B_{Z'})$ is sub-klt. 
\end{lemma}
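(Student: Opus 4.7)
The plan is to show $(X,B)$ is sub-klt by verifying positivity of log discrepancies divisor by divisor. Since $(X',B')$ is a crepant model of $(X,B)$, it suffices to prove $(X',B')$ is sub-klt. Take an arbitrary prime divisor $E$ over $X'$; we want to show that $a(E,X',B')>0$. Pass to a log resolution $h:Y\to X'$ on which $E$ appears as a prime divisor, with $Y$ smooth and $B_Y:=h^*(K_{X'}+B')-K_Y$ of SNC support, and, by further blowing up $Y$ and $Z'$, arrange that the induced morphism $g:Y\to Z'$ factors through a smooth birational model $\tau:Z''\to Z'$ with $B_{Z''}$ SNC, and such that when $E$ is $g$-vertical its image $g(E)$ is a prime divisor $F$ on $Z''$. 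The goal then reduces to showing $\mu_E(B_Y)<1$.

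The excellent base extension hypothesis is what lets us compare the discriminant on $Z''$ with the one on $Z'$. It yields $M_{Z''}=\tau^*M_{Z'}$, and together with Property (ii) of Section \ref{adjunction} this gives
$$K_{Z''}+B_{Z''}=\tau^*(K_{Z'}+B_{Z'}),$$
namely, $(Z'',B_{Z''})$ is the crepant pullback of $(Z',B_{Z'})$. Sub-klt of $(Z',B_{Z'})$ therefore transfers to sub-klt of $(Z'',B_{Z''})$, which on the smooth SNC model $Z''$ amounts to every coefficient of $B_{Z''}$ being strictly less than $1$.

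Now split into two cases. If $E$ is $g$-horizontal, restricting $B_Y$ to the general fiber of $g$ yields the crepant pullback on a log resolution of the generic fiber of $(X',B')$; since $(X,B)$ is sub-klt over the generic point of $Z$ by the definition of a klt-trivial fibration, we obtain $\mu_E(B_Y)<1$. If $E$ is $g$-vertical with $g(E)=F$, the definition of the discriminant part produces
$$\mu_F(B_{Z''})=1-t_F,\qquad t_F=\min_{i:\,g(E_i)=F}\frac{1-\mu_{E_i}(B_Y)}{\mu_{E_i}(g^*F)},$$
where $E_i$ ranges over prime divisors of $Y$ mapping onto $F$. From $\mu_F(B_{Z''})<1$ we obtain $t_F>0$, hence $\mu_{E_i}(B_Y)<1$ for every such $E_i$, in particular for $E$.

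The main technical subtlety (rather than a genuine obstacle) lies in arranging the compatible smooth models $(Y,Z'')$ so that $g(E)$ is a divisor on $Z''$; this is handled via standard flattening and further blow-ups. The crucial role of the excellent base extension is that it preserves the crepant pullback relation $K_{Z''}+B_{Z''}=\tau^*(K_{Z'}+B_{Z'})$ after further base changes by forcing the cancellation of the compatible moduli parts $M_{Z''}=\tau^*M_{Z'}$; without this, the sub-klt hypothesis would only constrain $B_{Z'}+M_{Z'}$ rather than $B_{Z'}$ alone, and the implication would fail.
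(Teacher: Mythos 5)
Your proposal is correct and is essentially the paper's argument in contrapositive form: the paper assumes a non-klt place exists, notes it must be vertical over $Z$ (your horizontal case), uses flattening and further blow-ups so that its image is a prime divisor $S$ on a model $Z''$, deduces $\mu_S(B_{Z''})\geq 1$ from the definition of the discriminant, and contradicts sub-kltness of $(Z'',B_{Z''})$, which follows from the excellent-base-extension identity $K_{Z''}+B_{Z''}=\tau^*(K_{Z'}+B_{Z'})$ exactly as you use it. The only cosmetic difference is your explicit threshold formula for $t_F$ (only the inequality $\mu_E(B_Y)+t_F\mu_E(g^*F)\leq 1$ is actually needed), so no further comment is required.
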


\begin{proof}

Let $(X',B')$ be a crepant model of $(X,B)$ such that the induced map $f' : X' \dashrightarrow Z'$ from $f$ is a morphism. Denote by $\nu: X' \to X$ the birational morphism. \par

Suppose that $(X,B)$ is not sub-klt. Then there is a prime divisor $D$ over $X$ such that $a(D,X,B) \leq 0$. We will construct a higher model $f'':X''\to Z''$ over $f':X'\to Z'$ as follows. \par

{}First, take a birational model $X''$ of $X'$ such that $D$ is a divisor on $X''$. Denote by $\pi : X'' \to X$ the birational morphism. Since $(X,B)$ is sub-klt over the generic point of $Z$, we see that $D$ is vertical over $Z$. Next, take a birational morphism $\rho : Z'' \to Z$ that flattens $f \circ \pi : X'' \to Z$. Finally, after possibly blowing-up $Z''$ while replacing $X''$ with a higher birational model and replacing $D$ with its birational transform, we may and do assume that
\begin{enumerate}
    \item[(1)] There are birational morphisms $\tau : Z'' \to Z'$ and $\xi : X'' \to X'$ where $X''$ and $Z''$ are both smooth projective;
    \item[(2)] the induced map $f'': X'' \dashrightarrow Z''$ is a morphism;
    \item[(3)] $f''(D)$ is a prime divisor $S$ on $Z''$.
\end{enumerate}
$$
\xymatrix{
 X'' \ar[r]^{\xi} \ar[d]^{f''}& X' \ar[r]^{\nu} \ar[d]^{f'}	& X \ar[d]^{f} \\
 Z'' \ar[r]^{\tau} &	 Z'\ar[r]^{\sigma}  & Z
}
$$

Let $K_{X''} + B'' = \xi^*\circ\nu^*(K_X + B)$ be the crepant pullback and let $B_{Z''}$ and $M_{Z''}$ be the respective discriminant part and the moduli part on $Z''$ for $(X'',B'')$. We have $\mu_D(B'')\geq 1$ since $a(D,X,B) \leq 0$. Thus, by the definition of discriminant part, one has  $\mu_{S}(B_{Z''}) \geq 1$, which implies that $(Z'',B_{Z''})$ is not sub-klt. On the other hand, since $Z'\to Z$ is an excellent base extension, one has $K_{Z''} + B_{Z''} = \tau^*(K_{Z'} + B_{Z'})$. By assumption that $(Z',B_{Z'})$ is sub-klt, we see that $(Z'',B_{Z''})$ is also sub-klt, a contradiction.
\end{proof}

The last part of this subsection is devoted to exploring adjunction for lc centers.\par

Let $(X,B)$ be a pair with a pure and exceptional lc center $G \subset X$ and denote by $\nu:F \to G$ the normalization. Let $f: Y \to X$ be a log resolution of $(X,B)$ and denote by $S$ the unique lc place on $Y$ whose image on $X$ is $G$. Write $K_Y + B_Y = f^*(K_X + B)$, and define $B_S := (B_Y - S)|_S$. Let $g: S \to F$ be the unique morphism induced by the restriction of $f$ to $S$. By \cite[Lemma 4.1]{Amb99}, one has 
\begin{enumerate}
    \item[(iv)] $g$ is a contraction;
    \item[(v)] $(K_S + B_S) \sim_{\bR,g} 0$ over $F$;
    \item[(vi)] $(S,B_S)$ is sub-klt over the generic point of $F$;
    \item[(vii)] $g_*\OO_{S}{(\roundup{-B_S})_{\eta_F}} = \OO_{F,\eta_F}$, where $\eta_{F}$ is the generic point of $F$.
\end{enumerate}
 Thus $g: (S,B_S) \to F$ is a klt trivial fibration and we can do adjunction to get the discriminant part $B_{F}$ and the moduli part $M_{F}$ of $(S,B_S)$ on $F$. Thus we can write $$K_S+B_S\sim_\bR f^*(K_F+B_F+M_F)$$
which directly implies 
$$(K_X + B)|_F:=j^*(K_X+B) \sim_\bR K_F + B_F + M_F$$
where $j: F\to G\hookrightarrow X$ is the composition. Hence we have:

(viii) Since $B \geq 0$, by \cite[Lemma 4.2 (3)]{Amb99}, one has $B_F \geq 0$.

\subsection{The Chow variety.}
Given a polarized projective variety, one would like to parametrize all its sub-varieties with a fixed dimension and a fixed degree. So we need to recall the Chow variety which was constructed in \cite[Chapter I]{K96}.\par
Let $X$ be a polarized projective variety. Define the following moduli functor $\mathcal{C}how_{n,d}(X)$ by 
$$\mathcal{C}how_{n,d}(X)(Z):=\left\{
\begin{aligned}
&\text{Families of nonnegative algebraic cycles of } \\
&\text{dimension $n$ and degree $d$ of } X \times Z/Z
\end{aligned}
\right\}.
$$
Then, by \cite[Chapter I, Theorem 3.21]{K96}, the functor $\mathcal{C}how_{n,d}(X)$ is represented by a universal family
$$p: {\rm Univ}_{n,d}(X) \to {\rm Chow}_{n,d}(X),$$
where ${\rm Chow}_{n,d}(X)$ is projective.

\subsection{Bounded families.}
We say that a set $\mathscr{X}$ of varieties is {\em bounded} (resp., {\em birationally bounded}) if there is a projective morphism $Z \to S$ such that $S$ is of finite type and that, for any $X \in \mathscr{X}$, there exists a closed point $s \in S$ and an isomorphism $X \to Z_s$ (resp., {a birational map $X \dashrightarrow Z_s$}), where $Z_s$ is the fiber of $s$. \par

We need the following lemma to slightly relax the condition of boundedness to birational boundedness.   

\begin{lemma}\label{m7}
Let $d$ be a positive integer and $M$ be a positive real number. 
Then there exists a bounded family $\mathcal{P}$ consisting of smooth projective varieties and depending only on $d,M$ such that the following property holds:
\begin{quote}
Assume that 
	\begin{enumerate}
		\item[(1)] $f : V \to T$ is a contraction between smooth projective varieties with $\dim V - \dim T = d$, and
		\item[(2)] any general fiber $X$ of $f : V \to T$ is of general type with $\vol(X) \leq M$.
	\end{enumerate}
	Then there exists the following commutative diagram
	$$
	\xymatrix{
	V' \ar[dr]^{g} \ar@{-->}[rr]^{\psi}& &V \ar[dl]_{f} \\
	 & T
	}
	$$
	satisfying: 
	\begin{enumerate}
		\item[(i)] $\psi : V' \dashrightarrow V$ is a birational map from a smooth projective variety $V'$;
		\item[(ii)] any general fiber $X'$ of $g$ belongs to the bounded family $\mathcal{P}$.
	\end{enumerate}
	If we assume further that 
	\begin{enumerate}
		\item[(3)] $V$ is of general type and for any general fiber $X$ of $f: V \to T$, there is an effective $\bQ$-divisor $\Delta \sim_\bQ \delta K_V$ on $V$ such that $X$ is an irreducible non-klt center of $(V,\Delta)$,
	\end{enumerate}
	then we can further require that
	\begin{enumerate}
		\item [(iii)] for a general fiber $X'$ of $g : V' \to T$, there is an effective $\bQ$-divisor $\Delta' \sim_\bQ \delta K_{V'}$ on $V'$ such that $X'$ is an irreducible non-klt center of $(V',\Delta')$.
	\end{enumerate}
\end{quote}
\end{lemma}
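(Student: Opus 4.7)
The plan is to invoke the Hacon-McKernan-Xu boundedness theorem (HMX) in families and then transfer the non-klt structure across a birational morphism. By HMX, the set of smooth projective $d$-folds of general type with canonical volume at most $M$ is birationally bounded; equivalently, their canonical models form a bounded family parametrized by some flat projective morphism $\mathcal{Y} \to S$ of finite type. After taking a simultaneous resolution $\widetilde{\mathcal{Y}} \to \mathcal{Y}$, we obtain a bounded family $\mathcal{P}$ of smooth projective $d$-folds such that every smooth projective $d$-fold of general type with volume $\leq M$ is birational to some member of $\mathcal{P}$.

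Next, to build $V' \to T$, we pass to the relative canonical model of $f$. Over the dense open $T^\circ \subset T$ on which $f$ is smooth with fibers of general type and volume $\leq M$, HMX yields a uniform integer $r = r(d, M)$ such that $|rK|$ induces the canonical model of each fiber; consequently the relative graded $\OO_{T^\circ}$-algebra $\bigoplus_{m \geq 0} f_*\OO_V(mrK_V)|_{T^\circ}$ is finitely generated with uniformly bounded degrees of generators and relations, and its relative $\mathrm{Proj}$ yields $V^c|_{T^\circ} \to T^\circ$ whose general fibers are the canonical models $X_t^c$. Extending by projective closure to $V^c \to T$ and resolving singularities, we obtain smooth $V' \to V^c$; the composition $g : V' \to V^c \to T$ is the desired morphism. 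Condition (i) holds because $V^c$ is birational to $V$ (hence so is $V'$), while (ii) holds because general fibers of $g$ are smooth resolutions of $X_t^c$ and so lie in $\mathcal{P}$ after enlarging $\mathcal{P}$ once to include such resolutions.

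For the additional conclusion (iii) under condition (3), we first arrange $V$ to dominate $V'$ by replacing $V$ with a common log resolution of $V$, $V'$ and $\Delta$, simultaneously lifting $\Delta$ by adding $\delta$ times the exceptional discrepancy; this preserves $\Delta \sim_\bQ \delta K$ and keeps the general fiber as an irreducible non-klt center, since the added exceptional divisor is disjoint from general fibers. This produces a birational morphism $\pi : V \to V'$ with $g \circ \pi = f$. Define $\Delta' := \pi_* \Delta$, which is effective with $\Delta' \sim_\bQ \delta \pi_* K_V = \delta K_{V'}$. By dimension, $\pi(X) = X'$ equals the general fiber of $g$. Picking an lc place $S$ of $(V, \Delta)$ with $\nu(S) = X$ on a log resolution $\nu : W \to V$, writing $K_V = \pi^* K_{V'} + E_\pi^+$ with $E_\pi^+ \geq 0$ and $\pi$-exceptional, and setting $F := \pi^* \pi_* \Delta - \Delta$ (also supported on the $\pi$-exceptional locus), a direct computation yields $a(S, V', \Delta') - a(S, V, \Delta) = -\mathrm{mult}_S(\nu^*(F - E_\pi^+))$, which vanishes because $\nu(S) = X$ avoids the $\pi$-exceptional locus. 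Hence $S$ is also an lc place of $(V', \Delta')$ with image $X'$, and since $\pi$ induces a bijection between non-klt centers away from the exceptional locus, $X'$ is indeed an irreducible non-klt center.

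The main obstacle is the second paragraph: constructing the relative canonical model globally over $T$ and ensuring the existence of a smooth model $V' \to T$ whose general fibers belong to the chosen bounded family $\mathcal{P}$ requires care, plausibly via embedding the fibers $X_t^c$ uniformly in projective space using $|rK|$ and then using a Hilbert or Chow scheme argument. The non-klt transfer in the third paragraph is then a formal birational computation that relies only on the fact that the general fiber of $f$ avoids all $\pi$-exceptional divisors.
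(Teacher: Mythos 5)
Your treatment of conclusion (iii) is essentially the paper's: the paper also passes to a common resolution $\alpha:W\to V$, $\beta:W\to V'$, sets $\Delta':=\beta_*(\alpha^*\Delta+\delta E)$ with $K_W=\alpha^*K_V+E$, and argues that since the general fibers $X$, $X'$ avoid the exceptional loci, $\psi$ is an isomorphism near their generic points, so the non-klt condition transfers. (Your phrase ``the added exceptional divisor is disjoint from general fibers'' is too strong --- it can meet them --- but all that is needed, and all your computation uses, is that it does not contain the generic point of a general fiber, so this is only an imprecision.)

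The genuine gap is exactly where you flag it, in the construction of $\mathcal{P}$ and of $V'\to T$, and flagging it does not close it, because this is the real content of the lemma. The family $\mathcal{P}$ must be fixed in advance, depending only on $(d,M)$, before $f:V\to T$ is given; your construction resolves the relative canonical model $V^c$ of the given fibration and then proposes to ``enlarge $\mathcal{P}$ once to include such resolutions.'' But the fiber of an arbitrary resolution of $V^c$ over a general $t$ is some resolution of $X_t^c$ depending on the input data, and the collection of all resolutions of all canonical models with volume $\le M$ is not a bounded family (one can blow up indefinitely), so there is no fixed $\mathcal{P}$ containing them; the argument as written is circular. The paper's proof resolves precisely this point: it fixes $\mathcal{P}$ beforehand by taking the universal family over $S=\mathrm{Chow}_{d,\le r_d^dM}(\bP^N)$ (fibers embedded by $|r_dK|$, degree bounded by $r_d^dM$, $N=\rounddown{r_d^dM}+d$), stratifying $S$ so the family becomes flat with smooth base, and choosing once and for all a simultaneous resolution $\mathscr{Y}_i\to\mathscr{X}_i$ by blowups whose centers behave uniformly over each stratum; then, given $f$, it shows the relative $r_d$-canonical image $Z_0\to T_0$ is the pullback of the universal family along a classifying morphism $T_0\to S_i$, so those fixed blowups pull back to blowups of $Z_0$, producing $V'_0\to T_0$ whose general fibers are literally fibers of $\mathscr{Y}_i\to S_i$, i.e.\ members of $\mathcal{P}$. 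Your closing remark that one should ``plausibly'' use $|rK|$ and a Hilbert or Chow scheme argument points at this, but the uniform-in-families resolution step (and the fact that the family over $T_0$ is pulled back from the universal one, which is what makes the fixed resolutions applicable) is missing from your proof.
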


\begin{proof} The proof is organized through several steps.\\

{\em Step 1.} The construction of $\mathcal{P}$ using the Chow variety. 

By the boundedness theorem in \cite{HM06},\cite{Tak06} and \cite{Tsu06}, there exists an integer $r_d$ depending only on $d$ such that $|r_dK_X|$ induces a birational map onto its image for any general fiber $X$.\par
Set $S:= {\rm Chow}_{d,\leq {r_d}^d M}(\bP^{N})$, the Chow variety of $d$-dimensional varieties in $\bP^{N}$ with degree $\leq {r_d}^d M$, where $N:= \rounddown{{r_d}^{d}M} + d$.  Then $S$ is projective and of finite type. Let $p:\mathscr{X} \to S$ be the universal family corresponding to the Chow variety. \par
First, take a stratification of $p : \mathscr{X} \to S$ as
$$ \coprod_{\rm finite} S_i \to S$$ 
such that each $S_i$ is smooth and that each $p_i$ is flat, where $p_i : \mathscr{X}_i \to S_i$ is the corresponding family. \par  
Then, for each $\mathscr{X}_i$, take a resolution $\mathscr{Y}_i \to \mathscr{X}_i$ by finitely many times of blowups of singularities. After possibly replacing $S_i$ with a further stratification, we can assume that each exceptional locus dominates certain $S_i$. 
Therefore we get a smooth bounded family $q=\coprod q_i : \coprod \mathscr{Y}_i \to \coprod S_i$, and denote by $\mathcal{P}$ this family.\\

{\em Step 2.} Proof that $\mathcal{P}$ satisfies all the requirements.\par
We need to construct $g : V' \to T$ and $\psi : V' \dashrightarrow V$. Let $T_0 \subseteq T$ be an affine open dense subset such that $V_t$ is smooth and $\vol(V_t) \leq M$ for any $t \in T_0$. Let $V_0$ be the preimage of $T_0$ and denote by $f_0 : V_0 \to T_0$ the restriction map. After possibly shrinking $T_0$, we may and do assume $f_0 : V_0 \to T_0$ is flat.

Let $\phi : V \dashrightarrow Z$ be the birational map induced by the relative $r_d$-canonical map over $T$ and let $\phi_0 : V_0 \dashrightarrow Z_0$ be the restriction of $\phi$ to $V_0$ where $Z_0$ is the image. After shrinking $T_0$, we may and do assume that, for any $t \in T_0$, $\phi_0|_{V_t}$ is birational onto its image. Then we can embed $Z_0$ into $\bP_{T_0}^m = T_0 \times \bP^m$, where $m := h^0(r_dK_X) - 1$ for a general fiber $X$ of $f_0 : V_0 \to T_0$ and we denote by $h_0 : Z_0 \to T_0$ the projection morphism. Let $A := \mathcal{O}_{\bP^m_{T_0}}(1) |_{Z_0}$ which is a relatively very ample divisor on $Z_0$ over $T_0$.
$$
\xymatrix{
V_0 \ar@{-->}[r]^{\phi_0} \ar[dr]_{f_0}& Z_0 \ar[d]^{h_0}  \ar@{^(->}[r]^i& \bP^m_{T_0} \ar[dl]^\pi \\
 & T_0 &
 }
$$
Then, for a fiber $Y$ of $h_0$, we have 
$$\deg{Y} = (A|_Y)^d \leq \vol(r_dK_X) \leq {r_d}^d M.$$
Since $\deg Y \geq h^0(r_dK_X) - d$, we have $m  \leq N = \rounddown{{r_d}^{d}M} + d$. Thus we have an embedding $Z_0 \to {\bP^N_{T_0}}$ such that the degree of any fiber of $h_0 : Z_0 \to T_0$ is no greater than ${r_d}^d M$.\par

By the representability of the Chow functor, there is a morphism $\rho :T_0 \to S$ such that $Z_0$ is the pull-back of the universal family $\mathscr{X}$. 
After possibly shrinking $T_0$ again, we may assume that $\rho(T_0)$ is contained in some $S_i$. Since every blowup of $\mathscr{X}_i$ induces a blow up of $Z_0$, we get a birational model $V'_0$ by blowing up $Z_0$ finitely many times such that the fiber of $f'_0 : V'_0 \to T_0$ is smooth and belongs to $\mathcal{P}$. 
$$\xymatrix{
V'_0	\ar[r]	\ar[d]	\ar@/_2pc/[dd]_{f'_0}&  \mathscr{Y}_i \ar[d]  \\
Z_0  \ar[r] \ar[d]^{f_0} &  \mathscr{X}_i  \ar[d]^{p} \\
T_0   \ar[r]^\rho   &  S_i 
}$$\par
Since $Z_0$ is dense in $Z$, blowing up a center in $Z_0$ also induces a blowup of $Z$ by taking the closure in $Z$. Thus we get a morphism $f' : V' \to T$ such that a general fiber $X'$ of $f'$ is smooth and belongs to $\mathcal{P}$. Finally, after resolving the special fiber of $f'$ and replacing $V'$ with the resulting model, we may and do  assume $V'$ is smooth. Then the morphism $g=f':V' \to T$ is what we want. And $\psi$ is taken to be the inverse of the composition of the two birational maps  $V \dashrightarrow Z \dashrightarrow V'$.\\

{\em Step 3.} Proof of Statement (iii). \par
Let $\alpha : W \to V$ and $\beta : W \to V'$ be a common resolution of $V$ and $V'$. 
Write $K_W = \beta^*K_V + E$ and $K_W = \alpha^*K_{V'} + F$, where $E,F$ are effective. \par
For a general fiber $X'$ of $g: V' \to T$, $\psi|_{X'}$ is birational and let $X$ be the birational transform of $X'$ on $V$. By assumption, there is a $\bQ$-divisor $\Delta \sim_\bQ \delta K_V$ on $V$ such that $X$ is an irreducible non-klt center of $(V,\Delta)$. 
Let $\Delta':=\beta_*(\alpha^*\Delta + \delta E)$, so $\Delta' \sim_\bQ \delta K_{V'}$. \par

Since $X,X'$ are general, they are not contained in the exceptional locus of $p,q$ and $\beta_*{E}$ does not contain $X'$. Thus $\psi$ induces an isomorphism between neighborhoods of the generic points of $X$ and $X'$, then it follows that the log discrepancies over these neighborhoods with respect to $(V,\Delta)$ and $(V', \Delta')$ are the same.
Therefore $X'$ is an irreducible non-klt center of $(V',\Delta')$.
\end{proof}

We will apply the following boundedness theorem of Birkar in our argument. 
\begin{thm}\label{BAB2}
(\cite[Theorem 1.8]{Bir21})
Let $d$ and $r$ be positive integers and $\epsilon$ be a positive real number. Then there exists a positive number $t$, depending only on $d$, $r$ and $\epsilon$ such that the following property holds:
\begin{quote}
Assume that
\begin{itemize}
\item[(1)] $(X,B)$ is a projective $\epsilon$-lc pair of dimension $d$,
\item[(2)] $A$ is a very ample divisor on $X$ with $A^d \leq r$,
\item[(3)] $A - B$ is pseudo-effective,
\item[(4)] $M \geq 0$ is an $\bR$-Cartier $\bR$-divisor with $A-M$ being pseudo-effective.
\end{itemize}
Then one has
$$\lct(X,B,|M|_{\bR}) \geq \lct(X,B,|A|_\bR) \geq t.$$
\end{quote}
\end{thm}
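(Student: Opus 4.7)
\medskip
\noindent
\textbf{Proof proposal.} The plan is to separate the two inequalities; the first is a soft perturbation, while the second is the deep content and is reduced via boundedness to established ACC / complement results.

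\emph{The easy inequality} $\lct(X,B,|M|_\bR) \geq \lct(X,B,|A|_\bR)$ follows by a perturbation trick. Fix any effective $L \sim_\bR M$. Because $A$ is (very) ample and $A-M$ is pseudo-effective, the class $(1+\eta)A - L = \eta A + (A-M) + (M-L)$ is big for every $\eta>0$, hence admits an effective representative $E_\eta$. Setting $L_\eta := (L+E_\eta)/(1+\eta)$, we have $L_\eta \sim_\bR A$, $L_\eta \geq 0$ and $L_\eta \geq L/(1+\eta)$ as $\bR$-divisors. Monotonicity of lct in the divisor gives
$$\lct(X,B,|A|_\bR) \leq \lct(X,B,L_\eta) \leq \lct\!\bigl(X,B,L/(1+\eta)\bigr) = (1+\eta)\,\lct(X,B,L).$$
Letting $\eta\to 0$ and taking the infimum over $L$ yields $\lct(X,B,|A|_\bR)\leq \lct(X,B,|M|_\bR)$.

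\emph{The main inequality} $\lct(X,B,|A|_\bR)\geq t$ will be argued by contradiction. Suppose it fails, so there exists a sequence of data $(X_i,B_i,A_i)$ satisfying (1)--(3) together with effective $L_i\sim_\bR A_i$ such that $\lct(X_i,B_i,L_i)\to 0$. First, the very ample $A_i$ with $A_i^d\leq r$ embeds each $X_i$ as a subvariety of $\bP^{N_i}$ of degree at most $r$, and standard effectivity estimates on $h^0(A_i)$ for $\epsilon$-lc pairs (or direct use of $h^0(A_i)\leq \binom{r+d}{d}$-type bounds) make $N_i$ uniformly bounded. Hence $\{X_i\}$ sits in a bounded family via the Chow variety (exactly in the style of Lemma~\ref{m7}); after stratifying and passing to a subsequence, we may regard the $X_i$ as fibers of a single flat projective family.

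\emph{Boundedness of the boundaries.} Pseudo-effectivity of $A_i-B_i$ and of $A_i-L_i$ yields the intersection bounds $A_i^{d-1}\cdot B_i\leq A_i^d\leq r$ and $A_i^{d-1}\cdot L_i\leq r$. Because $(X_i,B_i)$ is $\epsilon$-lc, every coefficient of $B_i$ is at most $1-\epsilon$, so the number of components and the vector of coefficients of $B_i$ lie in a finite set depending only on $d,r,\epsilon$; the same kind of bound holds for $L_i$ after a small rescaling. Once again passing to a subsequence and using boundedness of the Hilbert/Chow schemes of bounded-degree divisors inside the bounded family of $X_i$'s, we may assume $(X_i,B_i,L_i)$ is a family of pairs whose coefficients belong to a fixed DCC (in fact finite, up to a perturbation) set.

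\emph{Conclusion via boundedness/ACC.} The expected contradiction is now produced by invoking the ACC theorem for log canonical thresholds of Hacon--McKernan--Xu together with Birkar's boundedness of $\epsilon$-lc pairs (the companion results of \cite{Bir21}): in a bounded family of $\epsilon$-lc pairs with coefficients in a DCC set, the values $\lct(X_i,B_i,L_i)$ belong to a fixed ACC set that is separated from $0$, contradicting $\lct(X_i,B_i,L_i)\to 0$. The main obstacle is exactly this last step, namely upgrading ACC (which does not by itself forbid accumulation at $0$) to a uniform positive lower bound. The standard way to overcome it, and the route that Birkar takes, is to produce for each $(X_i,B_i)$ a uniform $N$-complement $(X_i,B_i^+)$ with $N=N(d,r,\epsilon)$: the existence of such a complement, proven in the course of the BAB programme, forces the denominators appearing in crepant pull-backs to be bounded by $N$ and therefore bounds $\lct(X_i,B_i,L_i)$ from below by a fixed positive number, giving the required contradiction and the constant $t$.
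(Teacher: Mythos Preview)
The paper does not prove this statement at all: Theorem~\ref{BAB2} is quoted verbatim from Birkar \cite[Theorem~1.8]{Bir21} and used as a black box in Step~1 of the proof of Theorem~\ref{main1}. There is therefore no ``paper's own proof'' to compare your proposal against.

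On the merits of your sketch: the first inequality $\lct(X,B,|M|_\bR)\geq \lct(X,B,|A|_\bR)$ is argued correctly. For the main inequality, however, your outline has a genuine gap. You assert that ``the number of components and the vector of coefficients of $B_i$ lie in a finite set depending only on $d,r,\epsilon$'', but the hypotheses only give $B_i\leq 1-\epsilon$ and a degree bound $A_i^{d-1}\cdot B_i\leq r$; the coefficients of $B_i$ are arbitrary real numbers in $(0,1-\epsilon]$ and do \emph{not} lie in any fixed finite or DCC set. The same objection applies to $L_i$. Consequently the ACC theorem for log canonical thresholds is not directly applicable, and you correctly flag that ACC alone would not forbid accumulation at $0$ anyway. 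Your final paragraph invokes $N$-complements to close the gap, but this is precisely the substantial content of Birkar's paper; saying that bounded complements ``force the denominators \ldots\ to be bounded by $N$ and therefore bound $\lct$ from below'' is circular, since producing such complements in this generality \emph{is} the theorem. In short, the easy half is fine, but the hard half is an outline of Birkar's strategy rather than an independent argument, and the paper itself makes no attempt to reprove it.
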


\subsection{An extension theorem}
We need to apply the following useful extension theorem due to Hacon and McKernan. 

\begin{thm}\label{extend} (\cite[Corollary 3.17]{HM06}) 
Let $(V,\Delta),S,C,H$ satisfy the following conditions:
\begin{itemize}
\item[(1)] $(V,\Delta)$ is a log smooth lc $\bQ$-pair;
\item[(2)] $S$ is a component of $\Delta$ with coefficient $1$ such that $(K_V + \Delta)|_S$ is pseudo-effective; 
\item[(3)] $C \geq 0$ is an effective $\bQ$-divisor whose support does not contain $S$; 
\item[(4)] there is a $\bQ$-divisor $G \sim_\bQ K_V + \Delta + C$ such that $G$ does not contain any lc centers of $(V,\roundup{\Delta})$;
\item[(5)] $H$ is a sufficiently ample integral divisor which does not contain $S$ and only depends on $(V,\Delta),S,C$ and $A:=(\dim V+1)H $.
\end{itemize}
We give some notations of divisors on $S$ as follows:  $\Delta_S := (\Delta - S) |_S$, $C_S:= C|_S$, $H_S:=H|_S$, and $A_S :=A|_S$. Then, for any positive integer $m$ such that $m\Delta$ and $mC$ are integral, the image of the restriction map
$$H^0(V,m(K_V + \Delta + C) + H + A) \to H^0(S, m(K_S + \Delta_S + C_S) + H_S + A_S)$$
contains the image of the inclusion
$$i: H^0(S,m(K_S + \Delta_S) + H_S) \to H^0(S, m(K_S + \Delta_S + C_S) + H_S + A_S)$$
induced by the effective divisor $mC_S + A_S$. 
\end{thm}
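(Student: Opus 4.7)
The plan is to reduce to a plt situation via tie-breaking and then run the restricted/asymptotic multiplier ideal machinery exactly as in the Hacon--McKernan framework, combined with Kawamata--Viehweg vanishing made effective by the ampleness choice $A=(\dim V+1)H$.

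First, I would use hypothesis (4) to perform a tie-breaking. Since $G\sim_\bQ K_V+\Delta+C$ does not contain any lc center of $(V,\roundup{\Delta})$ and $(V,\Delta)$ is log smooth, I can choose small positive rationals $0<\eta\ll\epsilon\ll 1$ and replace $\Delta$ by $\Delta'=(1-\epsilon)(\Delta-S)+S+\eta G$ so that $(V,\Delta')$ is plt, $S$ is the unique non-klt place, and the resulting coefficient changes are absorbed into an auxiliary ample contribution of the form $\epsilon(K_V+\Delta+C)+\eta G-\eta(K_V+\Delta+C)$, which for $\epsilon,\eta$ small is dominated by (a tiny fraction of) $H$. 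Throughout the argument $S$ will retain coefficient $1$ and all other lc places of $(V,\roundup{\Delta})$ will be killed, while $m(K_V+\Delta+C)+H+A$ stays $\bQ$-linearly equivalent to $m(K_V+\Delta'+C')+H'+A$ for an appropriate $C'$ and $H'$ still sufficiently ample.

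Next, I would bring in the restricted asymptotic multiplier ideal sheaf $\mathcal{J}_{S,\|m(K_V+\Delta+C)+H\|}$ on $S$ built from the linear series on $V$, as in \cite[\S 3]{HM06}. The central Hacon--McKernan comparison says that for any $m$ with $m\Delta$, $mC$ integral, the image of the restriction
$$H^0\bigl(V,\,m(K_V+\Delta+C)+H+A\bigr)\longrightarrow H^0\bigl(S,\,m(K_S+\Delta_S+C_S)+H_S+A_S\bigr)$$
contains
$$H^0\bigl(S,\,\mathcal{J}_{S,\|m(K_V+\Delta+C)+H\|}\otimes\mathcal{O}_S(m(K_S+\Delta_S+C_S)+H_S+A_S)\bigr).$$
This comparison is proved via the $\sigma$-decomposition of the restricted series and Nadel vanishing applied to $A=(\dim V+1)H$, where the coefficient $\dim V+1$ is exactly the Castelnuovo--Mumford bound ensuring global generation of the relevant sheaves of ideals after twisting. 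The log-smoothness of $(V,\Delta)$ is needed here so that the divisor $S$ is a smooth Cartier divisor on the model where the multiplier ideals are computed and the usual adjunction $K_V+S|_S=K_S$ holds cleanly.

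The final step is to show that the section $i(\tau)\in H^0(S,m(K_S+\Delta_S+C_S)+H_S+A_S)$ attached to any $\tau\in H^0(S,m(K_S+\Delta_S)+H_S)$ actually lies inside this multiplier-ideal subspace. Here one uses that $mC_S+A_S$ is an effective divisor on $S$, so $i(\tau)$ vanishes along $mC_S+A_S$; combined with the tie-breaking reduction (so that no lc centers of $(V,\roundup{\Delta})$ other than $S$ contribute) and with condition (4) (which prevents $G$, hence the asymptotic base locus contributions coming from $C$, from meeting $S$ in an lc center), one checks that the restricted asymptotic ideal $\mathcal{J}_{S,\|\cdot\|}$ contains the ideal of $mC_S+A_S$ multiplied by the trivial ideal of the pure part of $\Delta_S$. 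The main obstacle I expect is precisely this multiplier-ideal containment at the boundary $S$: it is the heart of \cite[Thm.~3.16, Cor.~3.17]{HM06} and requires both the pluri-adjoint (Siu--Kawamata--Tsuji) lifting technique and a delicate estimate of asymptotic vanishing orders along divisors meeting $S$, rather than a one-line application of vanishing.
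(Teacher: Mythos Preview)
The paper does not prove this theorem: it is stated as Theorem~\ref{extend} with the attribution ``\cite[Corollary 3.17]{HM06}'' and is used as a black box in Step~5 of the proof of Theorem~\ref{main1}. So there is no ``paper's own proof'' to compare against; your proposal is an attempt to reconstruct the Hacon--McKernan argument itself.

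As a sketch of the Hacon--McKernan proof, your outline identifies the right architecture: tie-breaking to a plt pair with $S$ the unique non-klt center (using hypothesis~(4)), the restricted asymptotic multiplier ideal $\mathcal{J}_{\|\cdot\|S}$, Nadel vanishing plus Castelnuovo--Mumford regularity with the twist $A=(\dim V+1)H$, and the final containment showing that sections coming from $H^0(S,m(K_S+\Delta_S)+H_S)$ lie in the multiplier-ideal subspace. You are also right that the last step is the genuine obstacle and is not a one-liner. One point to be careful about in your tie-breaking: you want $(V,\Delta')$ plt \emph{and} still log smooth (or at least dlt after a further log resolution that does not disturb $S$), and the bookkeeping of how the perturbation $\epsilon,\eta$ is absorbed into $H$ needs the ``sufficiently ample'' clause in hypothesis~(5) rather than just ``a tiny fraction of $H$''. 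But since the paper treats the result as a citation, a full verification would amount to reproving \cite[Corollary 3.17]{HM06}, which is outside the scope of this paper.
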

Note that, in Theorem \ref{extend},  $H$ and $A$ do not depend on $m$.

\section{An extension theorem for fiber spaces--The proof of Theorem \ref{key}}

We start with considering a special case of Theorem \ref{key} where only one general fiber is taken into account. 

\begin{thm}\label{main1}
Let $n$ and $d$ be two integers with $n > d >0$ and $\mathcal{P}$ be a birationally bounded set of smooth projective varieties of dimension $d$. Then there exists a positive number $t$, depending only on $d$ and $\mathcal{P}$, such that the following property holds:
\begin{quote}
Let $f : V \to T$ be a contraction between smooth projective varieties and denote by $X$ a general fiber of $f$, which satisfies the following conditions:
\begin{itemize}
\item[(1)] $V$ is of general type;
\item[(2)] $\dim V = n$, $\dim X = d$ and $X$ is birationally equivalent to an element in the bounded set $\mathcal{P}$;
\item[(3)] there exists a positive rational number $\delta < t$ with $\Delta \sim_\bQ \delta K_V$ such that $X$ is an irreducible non-klt center of $(V,\Delta)$.   
\end{itemize}
Then, for any integer $p$ with $p \geq 2$, the restriction map
$$H^0(V,pK_V) \to H^0(X, pK_X)$$ is surjective.
\end{quote}
\end{thm}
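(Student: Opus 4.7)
The approach, following the author's hint, is to blow up $X\subset V$ and use the resulting exceptional divisor $S\to X$ as a bridge between $V$ and $X$: I combine Hacon--McKernan's extension theorem (Theorem \ref{extend}) with adjunction for the induced klt-trivial fibration $g: S\to X$. Two preliminary reductions: by Lemma \ref{m7} I may assume $X$ itself lies in the bounded family $\mathcal{P}$ (preserving the non-klt-center hypothesis); and by Lemma \ref{perturb} with a small $\epsilon_0>0$ I replace $\Delta$ by $\Delta'\sim_\bQ \delta' K_V$ with $\delta' \le (1+\epsilon_0)\delta$, so that $X$ becomes a pure and exceptional lc center of $(V,\Delta')$.

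Next I take a log resolution $\nu: W\to V$ of $(V,\Delta')$ with $S\subset W$ the unique lc place over $X$, and use Lemma \ref{m1.5} to arrange the components of the boundary off $S$ to be pairwise disjoint. Writing $K_W+\Gamma'-F=\nu^*(K_V+\Delta')$ with $\Gamma',F\ge 0$ having no common components (and $S$ of coefficient $1$ in $\Gamma'$), and using that a general fiber of $f$ has trivial normal bundle so $K_V|_X=K_X$, adjunction along $S$ yields the central identity
\[
K_S+B_S=\lambda\, g^*K_X,\qquad B_S:=(\Gamma'-S)|_S-F|_S,\quad \lambda:=1+\delta'.
\]
The map $g: (S,B_S)\to X$ is then a klt-trivial fibration. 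Performing adjunction for it and passing to an excellent base extension via Lemma \ref{m4.5} to make the moduli part $M_X$ nef gives $K_S+B_S\sim_\bQ g^*(K_X+B_X+M_X)$ with $B_X\ge 0$ (property (viii) of Section \ref{adjunction}), and hence the crucial numerical identity $B_X+M_X\sim_\bQ \delta' K_X$.

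The uniform constant $t$ of the theorem comes from Birkar's Theorem \ref{BAB2}: since $X\in\mathcal{P}$ is bounded there is a uniform very ample divisor $A_X$ on $X$ with $A_X^d$ bounded, and Theorem \ref{BAB2} provides a positive constant $t_B$ depending only on $d$ and $\mathcal{P}$ which uniformly bounds below the lc thresholds of $\bR$-linear systems dominated by $A_X$. Choosing $t<t_B$ and $\delta<t$ guarantees, via the identity $B_X+M_X\sim_\bQ \delta'K_X$, that $(X,B_X)$ is klt with a uniform gap to lc-ness. To actually build the extension, I apply Theorem \ref{extend} to $(W,\Gamma')$ with a carefully chosen auxiliary $\bQ$-divisor $C\ge 0$ making hypotheses (2)--(4) of that theorem hold (bigness of $K_W+\Gamma'+C$, and the effective representative avoiding lc centers of $(W,\lceil\Gamma'\rceil)$). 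For every integer $m$ such that $m\Gamma'$ and $mC$ are integral, the theorem then lifts sections of $m(K_S+(\Gamma'-S)|_S)+H_S$ from $S$ to sections of $m(K_W+\Gamma'+C)+H+A$ on $W$. Using the identity $m(K_S+(\Gamma'-S)|_S)=m\lambda\, g^*K_X+mF|_S$ together with the projection formula $g_*\mathcal{O}_S=\mathcal{O}_X$, these lifts contain all $g^*$-pullbacks of $H^0(X,m\lambda K_X)$; pushing forward via $\nu_*$ (with $\nu_*F=0$) and invoking the Nakayama-Zariski decomposition (Lemma \ref{m2}) together with the very-exceptional absorbing of Lemmas \ref{m3}--\ref{m4} to deal with the extra positivity from $H$, $A$ and $m\nu_*C$, I obtain the desired surjectivity $H^0(V,pK_V)\twoheadrightarrow H^0(X,pK_X)$ for all $p\ge 2$.

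The main obstacle is the \emph{numerical matching} between the divisor $m(K_W+\Gamma'+C)+H+A$ returned by Hacon--McKernan and the target $pK_V$: the extension naturally delivers sections of a strictly larger class than $pK_W$ (because of the ample pieces $H,A$), and the denominator of $\lambda=1+\delta'$ constrains which integers $m$ the theorem directly covers. Overcoming both requires passing to a Nakayama-Zariski model where the positive part of $pK_V$ is semi-ample (Lemma \ref{m2}(5)), using Lemma \ref{m2}(2) so that exceptional effective divisors do not change global sections, and Lemmas \ref{m3}--\ref{m4} to absorb the extra ample contributions as very exceptional. The final uniform threshold $t=t(d,\mathcal{P})$ is the minimum of Birkar's $t_B$, the perturbation tolerance $\epsilon_0$, and the constants coming from Lemma \ref{m7}.
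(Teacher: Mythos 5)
Your setup (reduction via Lemma \ref{m7} and Lemma \ref{perturb}, the lc place $S$ over $X$ as a bridge, adjunction for the klt-trivial fibration $g:(S,B_S)\to X$ with $B_X+M_X\sim_\bQ \delta' K_X$, and Birkar's Theorem \ref{BAB2} supplying the uniform $t$) matches the paper's Steps 1--3. But the core of the argument is missing, and the step you propose in its place does not work. Hacon--McKernan lifts sections of $m(K_S+\Delta_S)+H_S$ only into the strictly larger system $m(K_W+\Delta+C)+H+A$, and you cannot get back to $pK_V$ by ``absorbing the extra positivity from $H$, $A$ and $m\nu_*C$'' with Lemmas \ref{m2}--\ref{m4}: $H$ and $A$ are ample, hence in no sense very exceptional, and Lemma \ref{m2}(2) only allows you to strip off $\roundup{N_\sigma}$, never an ample summand; discarding ample pieces from a linear system loses sections in general. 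The paper never tries to transfer the lifted sections directly. Instead it uses the extension theorem purely \emph{asymptotically}, via Lemma \ref{m1} and Lemma \ref{m2}(3)(4), to prove the fixed-part inequality $(1+\delta)N_\sigma(\nu^*K_V)|_E\le N_\sigma(g^*K_X)+Q$ (so the auxiliary $H$, $A$ disappear after dividing by $m$ and letting $m\to\infty$), and then obtains the actual surjectivity for each fixed $p$ by Kawamata--Viehweg vanishing applied to $\nu^*(pK_V)-L$ with $L=\rounddown{\Gamma_1+\Gamma_2-F+(p-1-\delta)N_\sigma(\nu^*K_V)}$, using that $P_\sigma(\nu^*K_V)$ is nef and big on a suitable model. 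No vanishing theorem appears in your outline, and nothing in it replaces this mechanism; this is the decisive gap.

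Two further ingredients you omit are needed to make even the corrected scheme close. First, the splitting of the boundary on $W$ into a $g$-horizontal part $\Gamma_1$ and a $g$-vertical part $\Gamma_2$, and the proof (the paper's Step 4, via Lemma \ref{m1.5}, flattening and the discriminant computation) that $g^*(\delta K_X)-\Gamma_{2,E}$ is pseudo-effective: this is what lets one feed $C=\Gamma_2$ into Theorem \ref{extend}, and what bounds the error divisor $Q$ by $g^*(\delta A)$ so that Lemma \ref{m4} and the sub-klt statement of Step 3 give $\rounddown{L_{2,E}}\le 0$. Second, the constant $t$ must be calibrated against the uniform generation degree $l$ of the canonical rings of members of $\mathcal{P}$ (so that $(X,2tlD)$ is klt, not merely $(X,B_X)$ klt as you require); this is exactly what makes the vanishing argument valid for all $2\le p\le l$, and the remaining $p>l$ are handled by multiplicativity of the canonical ring. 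Your proposal neither calibrates $t$ this way nor addresses large $p$, so even its intended conclusion ``for all $p\ge 2$'' is not reached.
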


\begin{proof} First of all, by virtue of Lemma \ref{m7}, after possibly replacing $\mathcal{P}$ with a bounded set, we may and do assume that $\mathcal{P}$ is a bounded set and that the general fiber $X$ of $f$ belongs to $\mathcal{P}$. \\

{\em Step 1.} Existence of $t$. \par
By the proof of \cite[Theorem 3.5]{CJ17}, there exists a finite set $\mathcal{S}$ of positive integers depending only on $d$ and $\mathcal{P}$ such that for any $X \in \mathcal{P}$ the canonical ring $\oplus_{i\geq 0}{H^0(iK_X)}$ is generated by finitely many elements whose degree numbers belong to $\mathcal{S}$. Take $l$ be a common multiple of all numbers in $\mathcal{S}$. Then $l$ only depends on $d$ and $\mathcal{P}$. On the other hand, since $X$ is bounded, there is a very ample Cartier divisor $A$ on $X$ such that $A^d = \tilde{v}$ and that $A - K_X$ is ample and effective, where $\tilde{v}$ is a positive number which depends only on $d$ and $\mathcal{P}$.\par
Since $(X,0)$ is canonical, by Theorem \ref{BAB2}, there exists a positive number $t$ depending on $d$, $l$ and $\tilde{v}$ (hence only depending on $d$ and $\mathcal{P}$) such that, for any effective divisor $D$ on $X$ satisfying $A-D$ being pseudo-effective, we have such property that $(X,2tlD)$ is klt. \par
In next steps, we prove that this number $t$ satisfies requirements of the theorem.\\

{\em Step 2.} The setup for blowing-up $X$.\par 
 By Lemma \ref{perturb}, take a small rational number $\epsilon > 0$ with $\delta(1+\epsilon) < t$ and replace $\delta$ with $\delta(1+\epsilon)$, $\Delta$ with another $\Delta'\sim_\bQ \delta K_V$. Then we may and do assume that $X$ is a pure and exceptional lc center of $(V,\Delta)$.\par
Let $\nu: W \to V$ be a log resolution of $(V, \Delta)$ and $E$ be the unique lc place on $W$ with $\nu(E)=X$. By the existence of the minimal model \cite{BCHM10} and Lemma \ref{m2}(5), after possibly replacing $W$ with a further resolution, we may and do assume that $P_\sigma(\nu^*(K_V))$ is nef and that the components of $N_\sigma(\nu^*(K_V))$, exceptional divisors of $\nu$ and the birational transform of $\Delta$ have simple normal crossings. Denote by $g:E \to X$ the restriction map of $\nu$ to $E$. We see $g$ is a contraction by the connectedness lemma \cite[Theorem 17.4]{F92}. Then write   
$$\nu^*(K_V + \Delta) = K_W + E + \Gamma_1 + \Gamma_2 - F,$$
such that $\Gamma_1,\Gamma_2,F$ are effective with no common components, that $\Gamma_{1,E}$ is $g$-horizontal and that $\Gamma_{2,E}$ is $g$-vertical, where $\Gamma_{1,E},\Gamma_{2,E},F_E$ are the restriction $\Gamma_1|_E, \Gamma_2|_E,F|_E$, respectively. Since $E$ is the unique lc place of $(V,\Delta)$ with $\nu(E)=X$, we see that $\Gamma_{1,E} < 1$. In particular, there is no $g$-horizontal non-klt centers of $(E, \Gamma_{1,E} + \Gamma_{2,E} - F_E)$.\\

{\em Step 3.} The pair $(E, \Gamma_{1,E} + \Gamma_{2,E} - F_E)$ is sub-klt. \par
In fact, we would like to prove that, for any effective $\bR$-divisor $G \sim_\bR A$ on $X$, $(E, \Gamma_{1,E} + \Gamma_{2,E} - F_E + 2\delta(l-1)g^*G)$ is sub-klt.  \par
Since $g:(E,\Gamma_{1,E} + \Gamma_{2,E} - F_E) \to X$ is a klt-trivial fibration (see Subsection \ref{adjunction}(iv)$\sim$(vii)), we can do adjunction for $g:(E,\Gamma_{1,E} + \Gamma_{2,E} - F_E) \to X$. Write 
$$K_E + \Gamma_{1,E} + \Gamma_{2,E} - F_E \sim_\bR g^*(K_X + B_X + M_X),$$
where $B_X$ is the discriminant part and $M_X$ is the moduli part, and we have $B_X + M_X \sim_\bQ \Delta|_X \sim_\bQ \delta K_X$ by direct computations. We have $B_X \geq 0$ by \cite[Lemma 4.2(3)]{Amb99} (see also Subsection \ref{adjunction}(viii)). \par

We are prepared to use Lemma \ref{m5} for the contraction $g:E\to X$. By Lemma \ref{m4.5}, we can take an excellent base extension for $g:(E,\Gamma_{1,E} + \Gamma_{2,E} - F_E) \to X$. Let $\pi : E' \to E$ be a birational morphism such that $E'$ is smooth and that $g' :E' \dashrightarrow X'$ is an induced contraction from $g$.  
$$
\xymatrix{
E' \ar[r]^{\pi} \ar[d]_{g'} & E \ar[d]^g \\
X' \ar[r]^{\sigma} & X
}$$
Let $K_{E'} + \Gamma_{1,E'} + \Gamma_{2,E'} - F_{E'} = \pi^*(K_E + \Gamma_{1,E} + \Gamma_{2,E} - F_E)$ be the crepant pullback, where $\Gamma_{1,E'},\Gamma_{2,E'},F_{E'}$ are effective with no common components, $\Gamma_{1,E'}$ is $g'$-horizontal and $\Gamma_{2,E'}$ is $g'$-vertical. Let $B_{X'}$, $M_{X'}$ be the respective discriminant part, the moduli part due to adjunction for $g' : (E', \Gamma_{1,E'} + \Gamma_{2,E'} - F_{E'}) \to X'$. Here we require that $\sigma_*M_{X'} = M_X$ as explained in Subsection \ref{adjunction}(ii). If we do adjunction for $g': (E', \Gamma_{1,E'} + \Gamma_{2,E'} - F_{E'} + 2\delta(l-1)\pi^*g^*G) \to X'$, we will get the discriminant part $B_{X'} + 2\delta(l-1)\sigma^*(G)$ and the moduli part $M_{X'}$ (see \ref{adjunction}(ii)). Thus $X'$ is still an excellent base extension as well for the klt-trivial fibration $g: (E, \Gamma_{1,E} + \Gamma_{2,E} - F_E + 2\delta(l-1)g^*G) \to X$. By Lemma \ref{m5}, it suffices to prove that the sub-pair $(X', B_{X'} + 2\delta(l-1)\sigma^*G)$ is sub-klt.\par

Since $M_{X'}$ is nef and $\sigma^*(\delta A)$ is nef and big, there exists an effective $\bR$-divisor $D_{X'} \sim_\bR \sigma^*(\delta A) + M_{X'}$. It is sufficient to prove that the pair
$$(X', B_{X'} + 2\delta(l-1)\sigma^*G + D_{X'})$$ is sub-klt.\par 

Since $$K_{X'} + B_{X'} + 2\delta(l-1)\sigma^*G+  D_{X'} \overset{\sigma}\sim_\bR 0\ \ (\text{over}\ X),$$ let $D_X := \sigma_*(D_{X'}) \sim_{\bR} \delta A + M_X$, then we have 
$$K_{X'} + B_{X'} + 2\delta(l-1)\sigma^*G + D_{X'} = \sigma^*(K_X + B_X + 2\delta(l-1)G + D_X).$$
So it suffices to prove that $(X,B_X + 2\delta(l-1)G + D_X)$ is klt. Since 
\begin{align*}
0 \leq B_X + 2\delta(l-1)G + D_X &\sim_\bR B_X + 2\delta(l-1)A + \delta A +  M_X \\
&\sim_\bR 2\delta(l-1)A + \delta A + \delta K_X \\
&\leq 2\delta l A
\end{align*} and $\delta < t$, we see that $(X,B_X + 2\delta(l-1)G + D_X)$ is klt by the construction of $t$ in Step 1. \\

{\em Step 4.} The pseudo-effectiveness of $g^*(\delta K_X) - \Gamma_{2,E}$.\par

Since $(E, \Gamma_{1,E} + \Gamma_{2,E} - F_E)$ is sub-klt, by Lemma \ref{m1.5}, there is a log resolution $\rho : \hat{E} \to E$ from a smooth model $\hat{E}$ such that if we write $$K_{\hat{E}} + \Gamma_{1,\hat{E}} + \Gamma_{2,\hat{E}} - F_{\hat{E}} = \rho^*(K_E + \Gamma_{1,E} + \Gamma_{2,E} - F_E)$$ for the crepant pull-back, where $\Gamma_{1,\hat{E}}$, $\Gamma_{2,\hat{E}}$ , $F_{\hat{E}}$ are effective with no common components, $\Gamma_{1,\hat{E}}$ is horizontal over $X$ and $\Gamma_{2,\hat{E}}$ is vertical over $X$, then $(\hat{E}, \Gamma_{1,\hat{E}} + \Gamma_{2,\hat{E}} - F_{\hat{E}})$ is terminal. Let $\hat{g} =g \circ \rho : \hat{E} \to X$ be the composite map. 

Then take $\tau : X'' \to X$ to be a birational morphism from a smooth model $X''$ that flattens $\hat{g} : \hat{E} \to X$. Let $\xi : E'' \to \hat{E}$ be a birational morphism from a smooth model $E''$ (which factors through the flat model $U\to X''$ of $\hat{g}$) such that the induced birational map $g'' :E'' \dashrightarrow X''$ is a morphism. Let $\eta = \rho \circ \xi : E'' \to E$ be the composition.  
$$
\xymatrix{
E'' \ar@/^1.5pc/[rr]^\eta \ar[r]^{\xi} \ar[d]_{g''} & \hat{E} \ar[r]^\rho \ar[d]^{\hat{g}} & E \ar[dl]^g  \\
X'' \ar[r]^{\tau} & X
}$$
Let 
$$K_{E''} + \Gamma_{1,E''} + \Gamma_{2,E''} - F_{E''} = \xi^*(K_{\hat{E}} + \Gamma_{1,\hat{E}} + \Gamma_{2,\hat{E}} - F_{\hat{E}})$$
be the crepant pull-back on $E''$, where $\Gamma_{1,{E''}}$, $\Gamma_{2,{E''}}$ and $F_{{E''}}$ are effective with no common components and $\Gamma_{1,{E''}}$ is horizontal over $X$, and $\Gamma_{2,{E''}}$ is vertical over $X$.
Since $(\hat{E}, \Gamma_{1,\hat{E}} + \Gamma_{2,\hat{E}} - F_{\hat{E}})$ is terminal, $\Gamma_{2,E''}$ is just the birational transform of $\Gamma_{2,{\hat{E}}}$. In particular, any component in $\Gamma_{1,\hat{E}}$ and $\Gamma_{2,\hat{E}}$ can not be $\xi$-exceptional. In other words, the image of any prime component of $\Gamma_{2,E''}$ under $g''$ is still a prime divisor on $X''$, since $\tau$ flattens $\hat{g} : \hat{E} \to X$.  We may also require that $X''\to X$ is an excellent base extension with respect to the pair $(\hat{E}, \Gamma_{1,\hat{E}} + \Gamma_{2,\hat{E}} - F_{\hat{E}})$. 

Let $B_X$ and $M_X$ be as in Step 3. Let $B_{X''}$ and $M_{X''}$ be the discriminant part and the moduli part of the adjunction for $g'' : (E'', \Gamma_{1,E''} + \Gamma_{2,E''} - F_{E''}) \to X''$. We write $B_{X''} = B_{X''}^+ - B_{X''}^-$ in the unique way with $B_{X''}^+$ and $B_{X''}^-$ being effective with no common components. Note that $M_{X''}$ is nef in our situation. \par

Since $\eta_*(\Gamma_{2,E''}) = \Gamma_{2,E}$, it is sufficient to prove that $${g''}^*\tau^*(\delta K_X) - \Gamma_{2,E''}$$ is pseudo-effective. Since 
$${g''}^*\tau^*(\delta K_X) - \Gamma_{2,E''} = {g''}^*(B_{X''}^+) - \Gamma_{2,E''} + {g''}^*(\tau^{*}(\delta K_X) - B_{X''}^+),$$
it is enough to prove that both ${g''}^*(B_{X''}^+) - \Gamma_{2,E''}$ and $\tau^{*}(\delta K_X) - B_{X''}^+$ are pseudo-effective.\par

We prove that ${g''}^*(B_{X''}^+) - \Gamma_{2,E''} \geq 0$. In fact,  for any component $D''$ of $\Gamma_{2,E''}$ on $E''$, let $P'' := g''(D'')$. By the above assumption, $P''$ is a prime divisor on $X''$. By the definition of the discriminant part, we have $\mu_{P''}({B_{X''}}) = 1-t$, where 
$$t = \sup\{a|\Gamma_{1,E''} + \Gamma_{2,E''} - F_{E''} + a g''^*(P'') \text{ is lc over the generic point of $P''$}\}.$$

Let $c:= \mu_{D''}(g''^*(P''))$ and we see $c$ is a positive integer. Then we have $\mu_{D''}(\Gamma_{2,E''} + t g''^*(P''))  = \mu_{D''}(\Gamma_{2,E''}) + tc \leq 1$. Thus, 
\begin{align*}
\mu_{D''}g''^*(B_{X''}) &= c \cdot \mu_{P''}(B_{X''}) \\
&= c(1-t) \\
&\geq 1-ct \geq \mu_{D''}(\Gamma_{2,E''}).
\end{align*}
Thus $g''^*(B_{X''}^+) - \Gamma_{2,E''} \geq_{\bR} 0$ holds.\par

Then we are left to prove that $\tau^{*}(\delta K_X) - B_{X''}^+$ is pseudo-effective.
We rewrite
$$K_{X''} + B_{X''} + M_{X''} = \tau^*(K_X + B_X + M_X),$$
as 
\begin{equation}
\tau^*(B_X + M_X) - M_{X''} = K_{X''/X} + B_{X''}^+ - B_{X''}^-. \label{eq3.1}
\end{equation}
Note that $M_{X''}$ is actually determined up to $\bR$-linear equivalence, we may choose from the very beginning that $\tau_*M_{X''}=M_X$. Since $M_{X''}$ is nef, $\tau^*(M_X) \geq M_{X''}$ by the negativity lemma \cite[Lemma 3.3]{Bir12}. Since $B_X \geq 0 $, the left hand side of \eqref{eq3.1} is effective. Since $K_{X''/X} \geq 0$ by the smoothness of $X$ and that $B_{X''}^+,B_{X''}^-$ have no common components, we get 
$$\tau^*(B_X + M_X) - M_{X''} \geq B_{X''}^+.$$ 
Thus $\tau^*(\delta K_X) - B_{X''}^+ \sim_\bR \tau^*(B_X + M_X) - B_{X''}^+ \geq M_{X''}$ is pseudo-effective. \\

{\em Step 5.} A key application of Theorem \ref{extend} (yielding Inequality \eqref{step5}). \par
By Step 4, $g^*(\delta K_X) - \Gamma_{2,E}$ is pseudo-effective. Noting that $$g^*(\delta K_X) - \Gamma_{2,E} \sim_\bQ K_E - g^*K_X + \Gamma_{1,E} - F_E,$$ 
there exists an ample divisor $H_E$ such that for any sufficiently large and divisible integer $m$, we have $h^0(E,m(K_E - g^*K_X + \Gamma_{1,E} - F_E) + H_E) > 0$ by \cite[Corollary 11.2.13]{Laz2}. After taking a sufficiently ample divisor $H$ on $W$ such that $H|_E - H_E \geq 0$ and replacing $H_E$ with $H|_E$, we can assume $H|_E = H_E$. Note that $H$ does not depend on $m$.\par
For any $D_m \in |m(K_E - g^*K_X + \Gamma_{1,E} - F_E) + H_E|$, there is an inclusion of linear system 
$$|g^*(mK_X)| \hookrightarrow |m(K_E + \Gamma_{1,E}) + H_E|,$$
induced by the divisor $D_m + mF_E$.\par
To use Theorem \ref{extend}, we need to check all the conditions. Actually the notation $(V,\Delta),S,C,H$ in Theorem \ref{extend}, respectively, corresponds to $(W,E + \Gamma_1),E,\Gamma_2,H$ here. Besides, the divisor $A$ in Theorem \ref{extend} should be $(\dim W+1)H=(n+1)H$.
Note that $$K_W + E + \Gamma_1 + \Gamma_2 \sim_\bQ \nu^*(K_V + \Delta) + F.$$ 
Since $X$ is a general fiber of $f : V\to T$, that $F$ has no common components with $E$ and $\Gamma_{1}$, and that ${\rm Supp }(F) \cup {\rm Supp }(E) \cup {\rm Supp } (\Gamma_{1})$ has simple normal crossings, we see there exists an effective $\bQ$-divisor $\Lambda \sim_\bQ K_W + E + \Gamma_{1} + \Gamma_{2}$ such that $\Lambda$ does not contain any lc center of $(W,E + \roundup{\Gamma_1})$. In a word, all conditions of Theorem \ref{extend} in our setting here are satisfied. \par

Then,  by Theorem \ref{extend}, we see that any element of the image of
$$i: |m(K_E + \Gamma_{1,E}) + H_E| \to |m(K_E + \Gamma_{1,E} + \Gamma_{2,E}) + (n+2)H_E|$$
can be lifted to an element in
$$|m(K_W + E + \Gamma_1 + \Gamma_2) +  (n+2)H|=|m(\nu^*((1+\delta) K_V)+F) + (n+2)H|,$$
where $i$ is the injection induced by the divisor $m\Gamma_{2,E} + (n+1)H_E$ and $m$ is any positive integer making all above divisors integral.\par
By the argument above, for any effective member $N_m \in |g^*(mK_X)|$, there exists an effective $\Theta_m \in |m(\nu^*((1+\delta) K_V)+F) + (n+2)H|$, such that 
$$\Theta_m |_E = N_m + (D_m + mF_E) + (m\Gamma_{2,E} + (n+1)H_E).$$
Denote by $F_m$ the fixed part of $|m(\nu^*((1+\delta) K_V)+F) + (n+2)H|$ and by $Z_m$ the fixed part of $|g^*(mK_X)|$. Then, by Lemma \ref{m1}, we get
 \begin{equation*}F_m|_E \leq Z_m + D_m + mF_E + m\Gamma_{2,E} + (n+1)H_E
 \end{equation*}
for any effective member $D_m \in | m (g^*(\delta K_X) - \Gamma_{2,E}) + H_E| $.\par
Let $Y_m := \fix{m (g^*(\delta K_X) - \Gamma_{2,E}) + H_E}$. Since $F_m$ does not depend on the choice of $D_m$, we have 
\begin{equation}F_m|_E \leq Z_m + Y_m + mF_E + m\Gamma_{2,E} + (n+1)H_E.\label{fix-ineq}\end{equation}

Divide Inequality \eqref{fix-ineq} by $m$ on both sides and let $m$ tends to the infinity. Since $H$ and $H_E$ are sufficiently ample,  while applying Lemma \ref{m2}, we have 
\begin{eqnarray*}\lim_{m \to \infty}F_m/m &=& N_\sigma{(\nu^*((1+\delta)K_V) +F)} = (1+\delta)N_\sigma{(\nu^*(K_V)) +F},\\
\lim_{m \to \infty}{Z_m/m}& = &N_\sigma{(g^*(K_X))},\\
  \lim_{m \to \infty}{Y_m/m}& = &N_\sigma{(g^*(\delta K_X) - \Gamma_{2,E})}.
 \end{eqnarray*}
 Hence Inequality \eqref{fix-ineq} reads
\begin{equation}(1+\delta)N_{\sigma}(\nu^*(K_V))|_E \leq N_\sigma(g^*(K_X))+ Q, \label{step5}\end{equation}
where $Q := N_\sigma{(g^*(\delta K_X) - \Gamma_{2,E})} + \Gamma_{2,E}$.  We see that $Q \geq 0$. By the argument in Step 4, we have seen that 
$$g^*(\delta A) - Q=(g^*(\delta A)-\Gamma_{2,E})-N_\sigma{(g^*(\delta K_X) - \Gamma_{2,E})}$$
is pseudo-effective, where $A$ is the very ample divisor constructed in Step 1.\\

{\em Step 6.} The surjective map \eqref{step6} by Kawamata-Viehweg vanishing theorem. \par

Fix an integer $p$ with $2\leq p \leq l$ and define 
\begin{align*}
L &:= \rounddown{\nu^*(K_V + \Delta) - K_W - E + (p-1- \delta)N_\sigma(\nu^*K_V)} \\
 &= \rounddown{\Gamma_1 + \Gamma_2 - F + (p-1- \delta)N_\sigma(\nu^*K_V)}. 
\end{align*}
Note that, since $N_\sigma(\nu^*K_V)$ does not contain $E$, the support of $L$ does not contain $E$. \par
Since 
\begin{eqnarray*}
&& \nu^*(pK_V) - E - K_W - L \\
&\sim_\bQ& (p-1-\delta)\nu^*K_V + \Gamma_1 + \Gamma_2 - F - L  \\
&\sim_\bQ& \langle\Gamma_1 + \Gamma_2 - F + (p-1-\delta)N_\sigma(\nu^*K_V)\rangle + (p-1-\delta)P_\sigma(\nu^*K_V),
\end{eqnarray*}
where $(W,\fraction{\Gamma_1 + \Gamma_2 - F + (p-1-\delta)N_\sigma(\nu^*K_V)})$ is a klt pair and $(p-1-\delta)P_\sigma(\nu^*K_V)$ is nef and big, the restriction map
\begin{equation}H^0(W, \nu^*(pK_V) - L) \longrightarrow H^0(E, g^*(pK_X) - L|_E)
\label{step6}\end{equation}
is surjective, by the Kawamata-Viehweg vanishing theorem.\\

{\em Step 7.} The injective map \eqref{step7} between cohomological groups.\par

We explore the divisor $L|_E$.
By \eqref{step5}, we have
\begin{eqnarray*}
L|_E &=& \rounddown{\Gamma_{1,E} + \Gamma_{2,E} - F_E + (p-1-\delta)N_\sigma(\nu^*(K_V)|_E} \\
&\leq& \rounddown{\Gamma_{1,E} + \Gamma_{2,E} - F_E + \frac{p-1-\delta}{1+\delta}(N_\sigma(g^*(K_X)) + Q)} \\
&\leq& \roundup{L_{1,E}} + \rounddown{L_{2,E}},
\end{eqnarray*}
where $L_{1,E} := \frac{p-1-\delta}{1+\delta} N_\sigma(g^*(K_X))$ and $L_{2,E} := \Gamma_{1,E} + \Gamma_{2,E} - F_E + \frac{p-1-\delta}{1+\delta}Q$. 
Since $g^*(\delta A) - Q$ is pseudo-effective, by Lemma \ref{m4}, there exists an effective $G \sim_\bR A$ on $X$ such that $g^*(2\delta G) - Q \geq 0$. Thus $L_{2,E} \leq \Gamma_{1,E} + \Gamma_{2,E} - F_E + (p-1)g^*(2\delta G)$.\par
In Step 3, we have shown that $(E, \Gamma_{1,E} + \Gamma_{2,E} - F_E + (p-1)g^*(2\delta G))$ is sub-klt. Hence $(E, L_{2,E})$ is also sub-klt. It follows that $\rounddown{L_{2,E}} \leq 0$. Also it is clear that one has  $L_{1,E} \leq N_\sigma(g^*(pK_X))$.\par
Thus
\begin{eqnarray*}
H^0(E, g^*(pK_X) - L|_E) &\supseteq& H^0(E,g^*(pK_X) - \roundup{L_{1,E}} - \rounddown{L_{2,E}}) \\
&\supseteq& H^0(E,g^*(pK_X) - \roundup{N_\sigma(g^*(pK_X))}) \\
&=& H^0(E,g^*(pK_X)).
\end{eqnarray*} 
As the conclusion, there is the following injective map 
\begin{equation}j : H^0(E,g^*(pK_X)) \hookrightarrow H^0(E, g^*(pK_X) - L|_E).\label{step7}
\end{equation}

{\em Step 8.} Concluding the statement of this theorem.\par
In the following commutative diagram, the first row is surjective by \eqref{step6} and the injectivity of $j: H^0(g^*(pK_X)) \to H^0(g^*(pK_X) - L|_E)$ is due to \eqref{step7}. Denote by $r$ the restriction map $$H^0(\nu^*(pK_V)) \to H^0(E, g^*(pK_X)).$$ We also see $H^0(\nu^*(pK_V)) = H^0(\nu^*(pK_V) + \roundup{F})$ since the support of $F$ is $\nu$-exceptional. Besides, the two vertical injections are due to the fact $-L \leq \roundup{F}$ and $-L|_E \leq \roundup{F|_E}$. \par
$$\xymatrix{
H^0(\nu^*(pK_V) - L ) \ar@{^(->}[d] \ar@{->>}[r]
				& H^0(g^*(pK_X) - L|_E) \ar@{^(->}[d]\\
H^0(\nu^*(pK_V) + \roundup{F}) \ar@{=}[d] \ar[r]
				& H^0(g^*(pK_X) + \roundup{F|_E}) \\		
H^0(\nu^*(pK_V)) \ar@{=}[d] \ar[r] ^r 
				& H^0(g^*(pK_X)) \ar@{=}[d] \ar@{^(->}[u] \ar@/_7.5pc/[uu]^{j} \\
H^0(pK_V) \ar[r] &
		H^0(pK_X)		
}$$ \par
Now it can be seen that $r$ is surjective by chasing the diagram. In fact,  for any element in $H^0(g^*(pK_X))$, it is embedded into $H^0(g^*(pK_X) - L|_E)$ by $j$ and then is lifted to an element in the group $H^0(\nu^*(pK_V) - L)$. Thus we naturally get an element in $H^0(\nu^*(pK_V))$, which is the pre-image we need to find.\par
Finally, we have already proved that, for any integer $p \in [2,l]$, the restriction map $H^0(pK_V) \to H^0(pK_X)$ is surjective. Since for $i \geq l+1$, any element in $H^0(X,iK_X)$ can be written as a sum of products of elements in $H^0(X,pK_X)$ with $2 \leq p \leq l$ by the definition of $l$, we see that $H^0(V,iK_V) \to H^0(X,iK_X)$ is surjective as well for any integer $i \geq 2$. We have proved the theorem. 
\end{proof}

\begin{cor}\label{main2} (=Theorem \ref{key})
Under the same condition as that of Theorem \ref{main1}, pick two general fibers $X_1$ and $X_2$ of $f:V\to T$. Then, for any integer $p \geq 2$, the restriction map 
$$H^0(V,pK_V) \to H^0(X_1, pK_{X_1}) \oplus H^0(X_2,pK_{X_2})$$
is surjective. In particular, $r_s(V)\leq \text{max}\{r_s(X),2\}$ where $X$ is the general fiber of $f$. 
\end{cor}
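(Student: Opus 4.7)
The plan is to adapt the proof of Theorem \ref{main1} to two disjoint exceptional divisors simultaneously. I first shrink the claimed constant from $t$ to $t/2$, which is harmless since $t$ depends only on $d$ and $\mathcal{P}$. Applying the hypothesis to each of the two general fibers, I obtain effective $\bQ$-divisors $\Delta_1, \Delta_2 \sim_\bQ \delta K_V$ with $X_i$ an irreducible non-klt center of $(V, \Delta_i)$. Setting $\Delta := \Delta_1 + \Delta_2 \sim_\bQ 2\delta K_V$ with $2\delta < t$, the disjointness $X_1 \cap X_2 = \emptyset$ makes both $X_i$ irreducible non-klt centers of $(V, \Delta)$; applying Lemma \ref{perturb} independently in disjoint analytic neighborhoods of $X_1$ and $X_2$ (a variant justified by the disjointness), I can arrange that both $X_i$ are pure and exceptional lc centers of $(V, \Delta)$.

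Take a log resolution $\nu \colon W \to V$ of $(V, \Delta)$ and let $E_1, E_2$ be the unique lc places with $\nu(E_i) = X_i$; these are disjoint smooth prime divisors. Steps 2--5 of the proof of Theorem \ref{main1}---constructing the klt-trivial fibrations $g_i \colon E_i \to X_i$, establishing sub-kltness via an excellent base extension, verifying pseudo-effectiveness of $g_i^*(2\delta K_{X_i}) - \Gamma_{2, E_i}$, and invoking Theorem \ref{extend} with $S = E_i$---proceed in parallel and independently for $i=1,2$, since everything is local around each $X_i$ and $E_1$, $E_2$ sit over disjoint loci. The sub-kltness check in Step 3 still works: with $\delta$ replaced by $2\delta < t$, the uniform bound becomes $4\delta l A < 2 t l A$, which still lies in the klt regime by the choice of $t$ in Theorem \ref{BAB2}. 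This produces the analogue of Inequality \eqref{step5} on each $E_i$ with $\delta$ replaced by $2\delta$. In the Step 6 vanishing argument, I form $L := \rounddown{\Gamma_1 + \Gamma_2 - F + (p-1-2\delta)N_\sigma(\nu^*K_V)}$ and observe that
\begin{equation*}
\nu^*(pK_V) - (E_1 + E_2) - K_W - L \sim_\bQ \fraction{\Gamma_1 + \Gamma_2 - F + (p-1-2\delta)N_\sigma(\nu^*K_V)} + (p-1-2\delta)P_\sigma(\nu^*K_V)
\end{equation*}
is a fractional klt part plus a nef and big tail. Since $E_1 + E_2$ is a simple normal crossing divisor (in fact a disjoint union of smooth prime divisors), Kawamata--Viehweg vanishing yields surjectivity of
\begin{equation*}
H^0(W, \nu^*(pK_V) - L) \twoheadrightarrow H^0(E_1, g_1^*(pK_{X_1}) - L|_{E_1}) \oplus H^0(E_2, g_2^*(pK_{X_2}) - L|_{E_2}),
\end{equation*}
where the right-hand side splits because $E_1 \cap E_2 = \emptyset$. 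The diagram chase of Steps 7--8, performed componentwise using the sub-kltness on each $E_i$, then delivers the desired surjection $H^0(V, pK_V) \twoheadrightarrow H^0(X_1, pK_{X_1}) \oplus H^0(X_2, pK_{X_2})$ for $p \in [2, l]$, and the ring-generation argument at the end of Theorem \ref{main1}'s proof extends this to all integers $p \geq 2$.

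For the stability bound $r_s(V) \leq \max\{r_s(X), 2\}$: for any integer $m \geq \max\{r_s(X), 2\}$, the just-proved surjection lets $|mK_V|$ separate any two general fibers of $f$, while its restriction to each general fiber $X$ surjects onto $|mK_X|$, which is birational by $m \geq r_s(X)$; combining, $\varphi_{m,V}$ is birational onto its image. The main technical subtlety is the decoupling claim in the adapted Steps 3--4: the adjunction analysis of the klt-trivial fibration near each $X_i$ and the pseudo-effectiveness verification sit in disjoint analytic neighborhoods, so they run in parallel without mutual interference, and the doubled coefficient $2\delta$ still fits inside the klt threshold supplied by Theorem \ref{BAB2}.
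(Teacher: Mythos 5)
Your proposal is correct and follows essentially the same route as the paper: halve the threshold $t$, combine the two auxiliary divisors (with the perturbation lemma applied to make both $X_1$, $X_2$ pure exceptional lc centers of a single $\Delta$), take a log resolution with disjoint lc places $E_1,E_2$, subtract $E_1+E_2$ in the definition of $L$, and run Steps 3--8 of Theorem \ref{main1} componentwise, with Kawamata--Viehweg vanishing giving the split surjection onto the direct sum. The minor points you leave informal (the simultaneous perturbation for two disjoint centers, the doubled-coefficient bookkeeping against the choice of $t$ from Theorem \ref{BAB2}) are exactly the points the paper also leaves to the reader, so no substantive divergence or gap remains.
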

\begin{proof} The spirit of the proof is similar to that of Theorem \ref{main1}. So there is no need to repeat the same argument except for the following points. \par
Replacing $t$ with $\frac{1}{2}t$, we may assume that there exists a $\bQ$-divisor $\Delta \sim_\bQ \delta K_V$ for a positive rational $\delta < 2t$ such that both $X_1$ and $X_2$ are irreducible pure and exceptional lc centers of $(X, \Delta)$. Let $\nu: W \to V$ be a log resolution and $E_1$, $E_2$ be lc places of $X_1$, $X_2$, respectively and denote by $g_i : E_i \to X_i$ for $i=1,2$. Let $$L:= \rounddown{\nu^*(K_V + \Delta) - K_W - E_1 - E_2 + (p-1- \delta)N_\sigma(\nu^*K_V)}.$$
Going on the similar argument as in the proof of Theorem \ref{main1}, we will get the surjective map 
$$H^0(W, \nu^*(pK_V) - L) \to H^0(E_1, g_1^*(pK_X) - L|_{E_1}) \oplus H^0(E_2, g_2^*(pK_X) - L|_{E_2}).$$
Then we explore $L_{E_1}$ and $L_{E_2}$, respectively, in the similar way.  All other argument follows similarly to that in Theorem \ref{main1} without any problems. So we leave the details to interested readers. 
\end{proof}

\section{The proof of Theorem \ref{C-L-1} and Corollary \ref{C-L-2}}

Let us start with recalling the following definition.

\begin{defi}(\cite[Definition 6.5]{CJ17})
    Given a birationally bounded set $\mathscr{X}$ of smooth projective varieties and given a positive number $c$, we say that a fibration $f : X \to T$ between smooth projective vareities satisfies condition $(B)_{\mathscr{X},c}$ if
    \begin{enumerate}
        \item a general fiber $F$ of $f$ is birationally equivalent to an element of $\mathscr{X}$;
        \item for a general point $t \in T$, there exists an effective $\bQ$-divisor $D_t$ with $D_t \sim_\bQ \epsilon K_X$ for a positive rational number $\epsilon < c$, such that the fiber $F_t = f^{-1}(t)$ is an irreducible non-klt center of $(X,D_t)$.
    \end{enumerate}
\end{defi}

The next theorem reduces our problem to prove the extension theorem in the style of Theorem \ref{key}.  

\begin{thm}(\cite[Theorem 6.8]{CJ17})\label{fibration} Let $n>1$ be an integer. Fix a function $\lambda: \bZ_{>0} \times \bZ_{>0} \to \bR_{>0}$. There exists a constant $\mathcal{K} > 0$ and $n-1$ integers $M_{n-1} > M_{n-2} > \cdots > M_1 > 0$ such that, for any smooth projective $n$-fold $X$ with $\vol(X) \geq \mathcal{K}$, the pluricanonical map $\varphi_{a,X}$ is birational for all integers $a \geq 2$, unless that, after birational modifications, $X$ admits a fibration $f:X \to Z$ which satisfies Condition $(B)_{\mathscr{X}_{k,M_k^k},\lambda(k,M_k^k)}$, where $\mathscr{X}_{k,M_k^k}$ is the set of smooth projective $k$-folds $X$ of general type with $\vol(K_X) \leq M_k^k$.
\end{thm}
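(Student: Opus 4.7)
The plan is to argue by contradiction: assume that for some integer $a \geq 2$ the pluricanonical map $\varphi_{a,X}$ fails to be birational, and deduce that $X$ must carry, after birational modification, a fibration of the required type. The argument proceeds in three stages: creating prescribed singularities using the large canonical volume; organizing the minimal non-klt centers through general points into a fibration; and bounding the volume of the general fiber by a descending induction that determines the constants $\mathcal{K}, M_{n-1}, \ldots, M_1$.

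First, since $\vol(X) \geq \mathcal{K}$ is large, a standard Angehrn-Siu/Hacon-McKernan multiplier-ideal construction produces, for any two general points $x_1, x_2 \in X$, an effective $\bQ$-divisor $D \sim_\bQ \epsilon K_X$ with $\epsilon$ controlled by a negative power of $\vol(X)^{1/n}$ such that both points lie in $\mathrm{Nklt}(X, D)$. If $\varphi_{a,X}$ fails to separate $x_1$ from $x_2$, then by tie-breaking one can perturb $D$ (using Lemma~\ref{perturb} in spirit) to arrange that $x_1$ lies on an irreducible minimal non-klt center $Z_{x_1}$ of $(X, D')$ for a nearby $D' \sim_\bQ \epsilon' K_X$ with $\epsilon' < \lambda(k, M_k^k)$, where $k$ will be the generic dimension of these centers. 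The genericity of $x_1$ and the constructibility of the minimal-center dimension guarantee that $\dim Z_x$ takes a constant value $k$ with $1 \leq k \leq n-1$ as $x$ ranges over a dense open $U \subseteq X$.

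Second, the family $\{Z_x\}_{x \in U}$ is parametrized by its Chow points, yielding a rational map $X \dashrightarrow \mathrm{Chow}(X)$; taking Stein factorization on a resolution produces, on a birational model of $X$, a fibration $f : X' \to Z$ of relative dimension $k$ whose general fiber $F$ is birationally equivalent to a minimal non-klt center of the corresponding $(X', D')$. This gives clause (1) of condition $(B)$ automatically for whatever birationally bounded set $F$ lands in, and clause (2) follows from the explicit divisor $D'$ with $\epsilon' < \lambda(k, M_k^k)$ inherited from the first stage. That $x \mapsto [Z_x]$ defines a nontrivial fibration (rather than a constant map to a single center) is forced by the minimality of each $Z_x$ together with the fact that a generic $x$ lies on at most finitely many minimal centers.

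The main obstacle is clause (1) in the precise form $F \in \mathscr{X}_{k, M_k^k}$, i.e., the bound $\vol(F) \leq M_k^k$. The constants are fixed by descending induction on $k$: $M_{n-1}$ is chosen first, large enough to accommodate all codimension-one minimal centers arising from divisors with $\epsilon' < \lambda(n-1, M_{n-1}^{n-1})$; then each $M_k$ is chosen in terms of all $M_{k'}$ with $k' > k$ so that, should $\vol(F)$ exceed $M_k^k$, one could iterate the construction inside $F$ via subadjunction (as in Subsection~\ref{adjunction}) and produce a further fibration on $F$ of dimension $k' > k$, contradicting the minimality of $F$ as a non-klt center of $(X', D')$. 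Finally $\mathcal{K}$ is chosen to dominate all these thresholds so that the initial multiplier-ideal construction goes through with the required $\epsilon$. The delicate point is to keep the inductive parameters compatible with the $\lambda$-thresholds at every level, which is exactly what the ordering $M_{n-1} > \cdots > M_1$ encodes.
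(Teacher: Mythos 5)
First, a caveat on the comparison itself: the paper does not prove this statement; it is quoted verbatim from \cite[Theorem 6.8]{CJ17}, so your sketch can only be measured against the known Chen--Jiang argument, whose overall shape (cutting non-klt centers through general points using large volume, organizing the minimal centers into a covering family, and an induction on the dimension of the centers to fix the constants) you have correctly identified. Within that shape, however, there are two genuine gaps. The first is the passage from the family $\{Z_x\}$ of minimal non-klt centers to a fibration whose \emph{general fiber is itself a center}, which is exactly what clause (2) of condition $(B)$ demands. Taking the Chow map and Stein factorization only gives fibers that \emph{contain} the centers through their points; to conclude that the general fiber equals $Z_x$ one needs the relation to be fibration-like, i.e.\ that $Z_y=Z_x$ for general $y\in Z_x$ (or a mechanism, as in \cite{CJ17} and the earlier Todorov/Hacon--McKernan arguments, to cut the center further or to separate points when this fails). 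Your justification --- ``forced by the minimality of each $Z_x$ together with the fact that a generic $x$ lies on at most finitely many minimal centers'' --- does not address this point, and it is precisely the crux of the theorem.

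The second gap is in the bookkeeping that produces $\mathcal{K}$ and $M_{n-1}>\dots>M_1$. Your descending induction (choose $M_{n-1}$ first, then each $M_k$ in terms of the $M_{k'}$ with $k'>k$) runs the dependency backwards: when a $k$-dimensional center has volume exceeding $M_k^k$, one cuts \emph{inside} it and pays an extra coefficient of order $k/M_k$, and the resulting center has dimension $j<k$; for that new center to satisfy the threshold one needs the accumulated coefficient to stay below $\lambda(j,M_j^j)$, so $M_k$ must be chosen \emph{after} the $M_j$ with $j<k$ (and $\mathcal{K}$ last of all, to make the initial coefficient small). Moreover, the step ``should $\vol(F)$ exceed $M_k^k$ one produces a further fibration on $F$ of dimension $k'>k$, contradicting minimality'' is not correct: cutting inside $F$ yields \emph{lower}-dimensional centers for a \emph{different} boundary with slightly larger coefficient, which contradicts nothing; the actual argument is an iteration that terminates either at zero-dimensional centers --- in which case one still has to invoke Nadel/Kawamata--Viehweg vanishing, with total coefficient kept below $1$, to deduce birationality of $\varphi_{a,X}$ for every $a\geq 2$, a step your sketch omits --- or at a center of some dimension $k$ with volume at most $M_k^k$, which is the case yielding condition $(B)_{\mathscr{X}_{k,M_k^k},\lambda(k,M_k^k)}$.
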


Now we can prove our main theorem as follows.

\begin{proof}[Proof of Theorem \ref{C-L-1}]
By Theorem \ref{key}, for any integer $d$ and a birationally bounded set $\mathcal{P}$ consisting of smooth projective varieties of dimension $d$, there is a positive number $t = t(d,\mathcal{P})$ satisfying the conditions in Theorem \ref{key} (see Step 1 in the proof of Theorem \ref{main1}). Then we define the following function
\begin{align*}
    \lambda : \bZ_{>0} \times \bZ_{>0} &\to \bR_{>0} \\
    (d,M) &\mapsto t(d,\mathscr{X}_{d,M}), 
\end{align*}
where $\mathscr{X}_{d,M}$ is the set of smooth projective $d$-folds of general type with the canonical volume $\leq M$.
By Theorem \ref{fibration}, there exists a constant $\mathcal{K}>0$ and $n-1$ positive integers $M_{n-1}, \cdots M_1$ such that, for any smooth projective $n$-fold $V$ with $\vol(V) \geq \mathcal{K}$, the $m$-pluricanonical map $\varphi_{m,V}$ is birational for all integers $m \geq 2$, unless that, after replacing $V$ with a birational modification, there is a fibration $f : V \to T$ which satisfies Property $(B)_{\mathscr{X}_{k,M_k^k},\lambda(k,M_k^k)}$ for some positive integer $k\leq n-1$. Then, by Theorem \ref{key} while recalling the definition of $\lambda$, the pluricanonical map $\varphi_{m,V}$ is birational for any $m \geq r_s(X_t)$ where $X_t$ is the general fiber of $f$. Noting that $r_s(X_t)\leq r_k$ and that the sequence $\{r_k\}$ is increasing, we have proved that the $m$-canonical map $\varphi_{m,V}$ is birational for all $m \geq r_{n-1}$. This completes the proof of the first statement. 

We consider a variety with positive geometric genus. On one hand, the canonical volume is linearly increasing with regard to the geometric genus due to the existence of Noether inequality (see Lemma \ref{Noether}). On the other hand, given a fibration $f:V\to T$ with $p_g(V)>0$, each general fiber $X_t$ of $f$ still has $p_g(X_t)>0$. Hence one just simply replace $r_k$ with $r_k^+$ and copy the same proof in the first part. So the second statement follows. 
\end{proof}

Before proving Corollary \ref{C-L-2}, we need to apply the following effective inequality.

\begin{thm}\label{NN} (\cite[Theorem 1.1]{CJ23}) Fix two integers $n$ and $k$ with $n>0$ and $1\leq k< n$. There exist positive numbers $a_{n,k}$ and $b_{n,k}$ such that the following inequality 
$$\vol(X)\geq a_{n,k}h^0(X,\Omega_X^k)-b_{n,k}$$
holds for every smooth projective $n$-fold $X$ of general type. 
\end{thm}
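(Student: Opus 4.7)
The statement is a generalized Noether-type inequality: it asserts that each Hodge number $h^0(X,\Omega^k_X)$ of a smooth projective $n$-fold of general type grows at most linearly in the canonical volume. A useful first reduction is the Hodge symmetry $h^0(X,\Omega^k_X)=h^k(X,\mathcal{O}_X)$, so the task becomes bounding the higher cohomology of the structure sheaf linearly in $\vol(X)$. This recovers Lemma~\ref{Noether} in the boundary case $k=n$ (via Serre duality, $h^n(\mathcal{O}_X)=p_g(X)$) and matches its spirit for intermediate $k$.

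I would attack the inequality by induction on the dimension $n$. The base case $n=2$ reduces to classical surface theory, where one combines Noether's inequality $K_S^2\geq 2p_g-4$ with inequalities involving the irregularity $q(S)$ (Debarre, Bombieri). For the inductive step, note that the boundedness of varieties of general type with bounded canonical volume (Hacon--McKernan, Takayama, Tsuji, Birkar) immediately yields that $h^k(\mathcal{O}_X)$ is bounded by \emph{some} function of $\vol(X)$ along each bounded family; the real content is to upgrade this to a \emph{linear} bound, so the combinatorial structure of the problem must be exploited more finely.

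The natural organising principle for the inductive step is the Albanese morphism $\mathrm{alb}_X:X\to\mathrm{Alb}(X)$. When $\mathrm{alb}_X$ has positive-dimensional generic fiber $F$, one uses Koll\'ar's decomposition of $R^i\mathrm{alb}_*\omega_X$ together with the Green--Lazarsfeld--Chen--Hacon generic vanishing to express $h^k(\mathcal{O}_X)$ via the Leray spectral sequence in terms of Hodge numbers of $F$ (bounded by induction, using Viehweg's $C_{n,m}$ to ensure $F$ is of general type in the relevant cases) and of $\mathrm{Alb}(X)$. Viehweg--Kawamata weak positivity of direct images of pluricanonical sheaves then propagates back a linear lower bound on $\vol(X)$. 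When $\mathrm{alb}_X$ is not surjective, its image is, in the cases where the Iitaka--Kawamata conjecture is available, itself of general type, and the same machinery applies to a lower-dimensional variety.

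The hardest case is expected to be maximal Albanese dimension, $\dim\mathrm{alb}_X(X)=n$. Here $h^k(\mathcal{O}_X)$ is controlled roughly by $\binom{q(X)}{k}$, while $\vol(X)$ is known to grow like $q(X)$ only through effective Severi-type inequalities (Pardini, Barja--Stoppino, Chen--Hacon). The linear bound in this regime would come from combining such a Severi inequality with a stratification of $X$ by Albanese fibre dimension, which is where all the delicate analysis is concentrated. This is the place where a purely bounded-family argument fails to yield linearity, so genuine input from generic vanishing and positivity of Hodge bundles appears indispensable.
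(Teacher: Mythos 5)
This statement is not proved in the paper at all: it is quoted verbatim from Chen--Jiang \cite[Theorem 1.1]{CJ23} and used as a black box in the proof of Corollary \ref{C-L-2}, exactly as Lemma \ref{Noether} is quoted from \cite[Theorem 5.1]{CJ17}. So there is no internal argument to compare your proposal against; the relevant question is only whether your sketch would itself constitute a proof, and it does not.

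Your reduction via Hodge symmetry to bounding $h^k(\mathcal{O}_X)$ is fine, and the general toolkit you name (Koll\'ar's splitting of $R^i\mathrm{alb}_*\omega_X$, generic vanishing, weak positivity, Severi-type inequalities) is indeed the kind of input such results rest on. But as written the argument has genuine gaps. First, the inductive step through the Albanese map invokes the Iitaka conjecture $C_{n,m}$ and general-type hypotheses on fibers and on the Albanese image that are not available unconditionally in the generality you need; you cannot assume the generic Albanese fiber or the image is of general type, and when it is not, the induction hypothesis (which is stated only for varieties of general type) does not apply. Second, and more seriously, you explicitly concede that the decisive case of maximal Albanese dimension is where ``all the delicate analysis is concentrated'' and only assert that a linear bound ``would come from'' combining a Severi inequality with a stratification: since $h^k(\mathcal{O}_X)$ can grow like $\binom{q(X)}{k}\sim q(X)^k$, a linear bound in $h^k(\mathcal{O}_X)$ requires the volume to dominate a degree-$k$ polynomial in the irregularity, and no mechanism producing that is given; the standard Severi inequality bounds $\vol(X)$ below by a multiple of $\chi(\omega_X)$, which is not directly comparable to $h^k(\mathcal{O}_X)$ for intermediate $k$. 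Finally, even in the fibered cases, the passage from ``$h^k$ of the fiber is bounded'' plus weak positivity to a \emph{linear} lower bound on $\vol(X)$ in terms of $h^k(\mathcal{O}_X)$ is asserted rather than derived. So the proposal is a plausible roadmap, not a proof, and the correct disposition here is simply to cite \cite{CJ23} as the paper does.
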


\begin{thm} (=Corollary \ref{C-L-2}) 
     For any integer $n \geq 2$, there exists a number $M(n) > 0$.  For any smooth projective $n$-fold $V$ with either $|\chi(\mathcal{O}_V)| > M(n)$ or 
     $h^i(\OO_V)>M(n)$ for some positive integer $i\leq n$, the pluricanoncial map $\varphi_{m,X}$ is birational for all integers $m \geq r_{n-1}$.
\end{thm}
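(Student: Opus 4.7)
The plan is to reduce both conditions to the hypothesis of Theorem \ref{C-L-1} by showing that each of them forces the canonical volume $\vol(V)$ to be large. Once we know $\vol(V) > \mathfrak{V}(n)$, the first statement of Theorem \ref{C-L-1} immediately yields $r_s(V) \leq r_{n-1}$, which is exactly the birationality conclusion for $\varphi_{m,V}$ when $m \geq r_{n-1}$.

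To handle the hypothesis $h^i(\OO_V) > M(n)$ for some positive $i \leq n$, I would invoke Hodge symmetry for the smooth projective variety $V$, which gives
\[
h^i(\OO_V) = h^{0,i}(V) = h^{i,0}(V) = h^0(V, \Omega_V^i).
\]
For $1 \leq i < n$, Theorem \ref{NN} gives constants $a_{n,i}, b_{n,i} > 0$ with
\[
\vol(V) \geq a_{n,i} h^0(V, \Omega_V^i) - b_{n,i},
\]
so a sufficiently large value of $h^i(\OO_V)$ forces $\vol(V) > \mathfrak{V}(n)$. For the endpoint case $i = n$, one has $h^n(\OO_V) = p_g(V)$ by Serre duality, so Lemma \ref{Noether} gives $\vol(V) \geq a_n p_g(V) - b_n$ and the same conclusion holds. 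Taking $M(n)$ larger than $\max_i\{(\mathfrak{V}(n) + b_{n,i})/a_{n,i}\}$ and $(\mathfrak{V}(n)+b_n)/a_n$ handles this case.

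For the hypothesis $|\chi(\OO_V)| > M(n)$, I would use
\[
\chi(\OO_V) - 1 = \sum_{j=1}^{n} (-1)^j h^j(\OO_V),
\]
which gives $|\chi(\OO_V)| - 1 \leq \sum_{j=1}^n h^j(\OO_V)$. Hence some $h^{j_0}(\OO_V)$ with $1 \leq j_0 \leq n$ must exceed $(M(n)-1)/n$. Choosing $M(n)$ large enough so that $(M(n)-1)/n$ already triggers the bound from the previous paragraph then reduces us to the $h^i$ case. Collecting all the thresholds, we take $M(n)$ to be the maximum of the constants arising from the three situations.

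The step requiring the most care is the choice of $M(n)$: one must ensure that for every admissible $i$ (both the ``$\Omega^i$'' range $1 \leq i < n$ via Theorem \ref{NN} and the ``$p_g$'' endpoint $i = n$ via Lemma \ref{Noether}), the bound feeds into the fixed constant $\mathfrak{V}(n)$ produced by Theorem \ref{C-L-1}. There is no real obstacle here since only finitely many indices are involved and all the inequalities are linear in the invariants; the proof is thus a short bookkeeping argument combining Hodge symmetry, Theorem \ref{NN}, Lemma \ref{Noether}, and Theorem \ref{C-L-1}.
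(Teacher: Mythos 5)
Your proposal is correct and follows essentially the same route as the paper: reduce both hypotheses to a large canonical volume via Hodge symmetry and Theorem \ref{NN} (with Lemma \ref{Noether} for the endpoint), then invoke Theorem \ref{C-L-1}. In fact you are slightly more careful than the paper's brief argument, since you explicitly treat the case $i=n$ via $h^n(\OO_V)=p_g(V)$ and the Noether-type inequality, which Theorem \ref{NN} (stated only for $1\leq k<n$) does not cover.
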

\begin{proof}
    Note that $\chi(\mathcal{O}_V) = \sum_{i=0}^n{(-1)^i h^i(\mathcal{O}_V)}$. If $|\chi(\mathcal{O}_V)|$ is sufficiently large, then so is $h^i(\mathcal{O}_V)$ for some positive number $i \leq n$. By Theorem \ref{NN}, $\vol(V)$ is also sufficiently large. Thus there exists a number $M(n) > 0$ such that, whenever $|\chi(\mathcal{O}_V)|>M(n)$    or $h^i(\OO_V)>M(n)$, 
    $\vol(V) > \mathfrak{V}(n)$  where $\mathfrak{V}(n)$ is the same constant described in Theorem \ref{C-L-1}. Hence the statement follows directly from Theorem \ref{C-L-1}. 
\end{proof}

\section*{\bf Acknowledgments}
The authors appreciate fruitful discussions with Zhi Jiang and Chen Jiang during the preparation of this paper. Especially Chen Jiang pointed out to us an imprecise application of the adjunction in Subsection \ref{adjunction} in an earlier version of this paper. The second author would like to thank Jianshi Yan, Yu Zou, Minzhe Zhu, Mengchu Li, Wentao Chang for useful discussions and help in study.

\end{document}